\documentclass{amsart}

% Information that is shared between the article and the supplement
% (title and author information, macros, packages, etc.) goes into
% ex_shared.tex. If there is no supplement, this file can be included
% directly.

% SIAM Shared Information Template
% This is information that is shared between the main document and any
% supplement. If no supplement is required, then this information can
% be included directly in the main document.

% Packages and macros go here
\usepackage{lipsum}
\usepackage{enumitem}
\usepackage{cleveref}
\usepackage{amsfonts}
\usepackage{amssymb}
\usepackage{amsmath}
\usepackage{amsthm}
\usepackage{colortbl}
\usepackage{mathtools}
\usepackage{graphicx,xcolor}
\usepackage{epstopdf}
\usepackage{booktabs}
\usepackage{caption}
\usepackage{subcaption}
\usepackage{algorithmic}
\usepackage[left=1.25in,right=1.25in]{geometry}
\ifpdf
  \DeclareGraphicsExtensions{.eps,.pdf,.png,.jpg}
\else
  \DeclareGraphicsExtensions{.eps}
\fi

% Add a serial/Oxford comma by default.

% Used for creating new theorem and remark environments

\newtheorem{lemma}[section]{Lemma}
\newtheorem{theorem}[section]{Theorem}

\crefname{hypothesis}{Hypothesis}{Hypotheses}

\newtheorem{exm}[subsection]{Example}
\crefname{exm}{Example}{Examples}

% Sets running headers as well as PDF title and authors
%\headers{Conformal FEM for Rosenau-Burgers}{Ankur, R. Jiwari, and A. Narayan}

% Title. If the supplement option is on, then "Supplementary Material"
% is automatically inserted before the title.
\title{Conformal Finite Element Methods for nonlinear Rosenau-Burgers-biharmonic models}\thanks{
Ankur has been supported by the University Grant Commission India through ID. No.  JUNE18-416131; Ram Jiwari has been supported by the National Board of Higher Mathematics (NBHM), India through grant No. 02011/3/2021NBHM(R.P)/R\&D II/6974;  A. Narayan was partially supported by NSF DMS-1848508.  This material is based upon work supported by both the National Science Foundation under Grant No. DMS-1439786 and the Simons Foundation Institute Grant Award ID 507536 while A. Narayan was in residence at the Institute for Computational and Experimental Research in Mathematics in Providence, RI, during the Spring 2020 semester.}

% Authors: full names plus addresses.
\author{Ankur\thankssymb{1}}\thanks{\thankssymb{1}Department of Mathematics, Indian Institute of Technology Roorkee, Roorkee-247667, India.
  Email: ankur@ma.iitr.ac.in.}
\author{Ram Jiwari\thankssymb{2}}\thanks{\thankssymb{2}Department of Mathematics, Indian Institute of Technology Roorkee, Roorkee-247667, India.
  Email: ram.jiwari@ma.iitr.ac.in.}
\author{Akil Narayan\thankssymb{3}}\thanks{\thankssymb{3}Department of Mathematics and Scientific Computing and Imaging (SCI) Institute, University of Utah, Salt Lake City, UT 84112, USA. Email: akil@sci.utah.edu.}

\usepackage{amsopn}

\def\R{{\mathbb R}}
\def\N{{\mathbb N}}
\newcommand{\pfpx}[2]{\frac{\partial #1}{\partial #2}}
\definecolor{Gray}{gray}{0.85}
\newcolumntype{a}{>{\columncolor{Gray}}c}
\setlength{\aboverulesep}{0pt}
\setlength{\belowrulesep}{0pt}
\makeatletter
\DeclareRobustCommand*\cal{\@fontswitch\relax\mathcal}
\makeatother
\makeatletter
\newcommand\thankssymb[1]{\textsuperscript{\@fnsymbol{#1}}}
\makeatother

% Optional PDF information
%\ifpdf
%\hypersetup{
%  pdftitle={Conformal Finite Element Methods for nonlinear Rosenau-Burgers-biharmonic models},
%  pdfauthor={Ankur, R. Jiwari, and A. Narayan}
%}
%\fi

% The next statement enables references to information in the
% supplement. See the xr-hyperref package for details.

% FundRef data to be entered by SIAM
%<funding-group specific-use="FundRef">
%<award-group>
%<funding-source>
%<named-content content-type="funder-name"> 
%</named-content> 
%<named-content content-type="funder-identifier"> 
%</named-content>
%</funding-source>
%<award-id> </award-id>
%</award-group>
%</funding-group>

\begin{document}

\maketitle

% REQUIRED
\begin{abstract}
We present a novel and comparative analysis of finite element discretizations for a nonlinear Rosenau–Burgers model including a biharmonic term. We analyze both continuous and mixed finite element approaches, providing stability, existence, and uniqueness statements of the corresponding variational methods. We also obtain optimal error estimates of the semidiscrete scheme in corresponding B\^{o}chner spaces. Finally, we construct a fully discrete scheme through a backward Euler discretization of the time derivative, and prove well-posedness statements for this fully discrete scheme. Our findings show that the mixed approach removes some theoretical impediments to analysis and is numerically easier to implement. We provide numerical simulations for the mixed formulation approach using $C^0$ Taylor-Hood finite elements on several domains. Our numerical results confirm that the algorithm has optimal convergence in accordance with the observed theoretical results. 
\end{abstract}

%% REQUIRED
%\begin{keywords}
%Rosenau–Burgers model, Biharmonic equation, Mixed Finite element method,  Stability estimates, Existence and uniqueness, Optimal error estimates
%\end{keywords}
%
%% REQUIRED
%\begin{MSCcodes}
%65M12, 65M60, 65N15, 65N30 
%\end{MSCcodes}

\section{Introduction}
The Rosenau–Burgers model is a special type of nonlinear partial differential equation (PDE) containing a biharmonic term that is of great importance in applications in condensed matter physics, nonlinear optics and fluid mechanics. The model is obtained by including a dissipative term in the Rosenau equation introduced by Roseanu \cite{rosenau1986quasi}. The Rosenau-type equations are widely used to describe wave-wave and wave-wall interactions. The Rosenau–Burgers model was first introduced in 1989 to describe the propagation of waves in shallow water, and also appears in natural phenomena such as water waves and bore propagation \cite{peregrine1966calculations}:
\begin{subequations}\label{eq:rn}
	\begin{eqnarray}
&u_{t}+\Delta^2u_t - \alpha \Delta u-\nabla\cdot g(u) =0\; \ \mbox{in}\;  \Omega\times(0,T],  \label{eq1*}
\end{eqnarray}
for some $T > 0$, supplemented with initial and homogeneous Dirichlet boundary conditions,
\begin{eqnarray}
&&u(x,t)=0\;=\;\Delta u(x,t)\;\;\mbox{for}\; x \in \partial \Omega,\label{eq2}\\
&&u(x,0) = u_{0}(x),\;\mbox{for}\;x\in\Omega, \label{eq3}
\end{eqnarray}
\end{subequations}
where $\Omega\subseteq\mathbb{R}^{n}$, $n \leq 2$ is a bounded domain and $g: \R \rightarrow \R^n$ is the vector function, 
 \begin{align*}
   g &= (g_{1}, g_{2},\ldots, g_{n})^T \in \R^n, & g_{i}(u)=-\Big(u+ \frac{1}{2}u^{2}\Big), \;\;\;\; \forall \; i \in [n] = \{1, \ldots, n\},
 \end{align*}
 where $[n] = \{1, 2, \ldots, n\}$.
 The dissipative coefficient $\alpha$ satisfies $\alpha > 0$. In addition, we assume that the initial data $u_0$ is smooth; we will articulate precise smoothness requirements later. 
\subsection{A brief literature review}
Constructing and implementing appropriate variational formulations for PDEs with biharmonic terms can be challenging. 
An alternative approach is to utilize nonconforming and mixed Galerkin finite element methods,
for example using piecewise quadratic polynomials on Morley elements for biharmonic problems \cite{morley1968triangular}, although there are several other classical examples for addressing such equations \cite{falk1978approximation, monk1987mixed, ciarlet1974mixed} and some recent advances, such as the $C^0$ linear FEM approach in \cite{guo2018ac}.
Similar approaches have been employed for problems of Rosenau-type as well \cite{cui2007numerical,zurnaci2019convergence,li2016numerical,rey2013large}.
For transport-reaction equations similar to Rosenau-type equations, $L^2$ error estimates have been derived \cite{cockburn2008optimal, cockburn2010optimal, zhang2020weak}.
Hybridizable discontinuous Galerkin methods also have \textit{a posteriori} error estimates for convection-diffusion equations \cite{chen2016robust}.
There is analysis for the existence, uniqueness, stability, and convergence of schemes for dispersive shallow water waves modeled by combining the Rosenau-Burgers equation and Rosenau-RLW equations \cite{wongsaijai2021optimal}.
For two-dimensional Rosenau-Burgers models, 
Fourier transform methods have been employed to understand convergence rates and long-time behavior \cite{liu2002better}. 
Finite element analysis for the type of model we consider has been carried out for second-order schemes using linearized algorithms \cite{rouatbi_efficient_2022,omrani_numerical_2022}, using splitting techniques \cite{atouani_mixed_2018}; both these approaches suffer from relatively low convergence rates.

\subsection{Contribution}
The main contributions of this article are in the numerical analysis and resulting implementation of implicit time-differencing finite element schemes for the Rosenau-Burgers-biharmonic problem \eqref{eq:rn}. In particular:
\begin{itemize}[itemsep=-2pt, topsep=0pt, leftmargin=20pt]
  \item We provide formulations for both primal and mixed variational formulations for \eqref{eq:rn}. See \eqref{eq:P1} and \eqref{eq:P2} for primal and mixed forms, respectively. We provide semidiscrete finite element formulations for these problems, see \eqref{eq77} and \eqref{eq:P2-semi}, respectively. Finally, we provide time-differencing fully discrete (directly implementable) schemes, see \eqref{eq85} and \eqref{eq:P2-full}, respectively.
  \item We provide the rigorous theory of the continuous-level, semidiscrete, and fully discrete schemes using analysis in B{\^o}chner spaces. For the primal formulation, our analysis contributions are:
    \begin{itemize}[topsep=-3pt,leftmargin=10pt]
      \item Existence and uniqueness of variational problem, see \cref{thm:P1-wellposed}
%      \item Stability of solutions to the variational problem, see \cref{thm1}
      \item Convergence rates for a finite element semidiscrete version of the problem, see \cref{thmerror}
      \item Well-posedness of an implicit fully discrete scheme, see \cref{thm:P1-full-wellposed}
      \item Convergence rates for the fully discrete scheme, see \cref{thmBE}
    \end{itemize}
  \item For the mixed formulation, our analysis contributions are:
    \begin{itemize}[topsep=-3pt,leftmargin=10pt]
      \item Existence and uniqueness of variational problem, see \cref{exist1}
%      \item Stability of solutions to the variational problem, see \cref{thm1.}
      \item Convergence rates for a finite element semidiscrete version of the problem, see \cref{thmerror.}
      \item Well-posedness of an implicit fully discrete scheme, see \cref{thm:P2-full-wellposed}
      \item Convergence rates for the fully discrete scheme, see \cref{thmBE*}.
    \end{itemize}
  \item We provide numerical results for the mixed formulation in two spatial dimensions that confirm the scheme order of accuracy provided by our theory, see \cref{Num}
\end{itemize}
Of particular note are the following aspects of our theory: This paper provides the first attempt to derive a new type of convergence results in different B\^{o}chner spaces for a time-dependent problem resulting from analysis of both the primal and mixed variational formulations. We show that the numerical solutions of the proposed methods converge to the exact solution with optimal orders under $L^2$, $H^1$ and $H^2$ norms. Our results require less regularity on the initial condition for the mixed formulation compared to the primal formulation.

\subsection{Structure of the paper}
   The remaining part of this paper is organized as follows:
        \Cref{discretization} describes the weak formulations, the finite element discretization, and summarizes notation that will be used throughout the article. \Cref{Sec P1} contains our theory for the primal formulation, including analysis for the continuous-level problem (\cref{ssec:P1-exist}), the semidiscrete problem (\cref{ssec:P1-semi}), and the fully discrete problem (\cref{ssec:P1-fully}). \Cref{Sec P2} contains our theory for the mixed formulation, including analysis for the continuous-level problem (\cref{existmixed}), the semidiscrete problem (\cref{semimix}), and the fully discrete problem (\cref{fully-mix}). Finally, \cref{Num} contains numerical experiments in the one and two-dimensional cases using Taylor-Hood finite elements on several domains.

  \section{Problem setup and weak formulations}\label{discretization}
  \vspace{-0.2cm}
  We first introduce some necessary function spaces for our analysis.
  \subsection{Notation and assumptions}\label{ssec:notation}
  For any $n \in \N$, we write $[n] = \{1, 2, \ldots, n\}$. Let $\Omega \subset \R^n, n\leq 2$ be a bounded, Lipschitz domain. We define standard $L^2$-type Sobolev spaces and norms: Given $m = 0, 1, 2$,
  \begin{align*}
    H^m(\Omega) &\coloneqq \left\{ u: \Omega \rightarrow \R\;\;\big|\;\; \| u\|_{m} < \infty \right\}, & 
    \| u\|^2_{m} \coloneqq \left\{\begin{array}{rl} 
      \|u\|^2, & m = 0 \\
      \|u\|^2 + \|\nabla u \|^2, & m = 1 \\
      \|u\|^2 + \|\nabla u \|^2 + \|\Delta u \|^2, & m = 2
    \end{array}\right.
  \end{align*}
  where $\|\cdot\|$ denotes the standard $L^2(\Omega)$ norm. When $m = 0$, we have $H^0(\Omega) = L^2(\Omega)$, and we will use $(\cdot,\cdot)$ and $\|\cdot\|$ to denote the associated $L^2(\mathcal{V})$ inner product and norm, respectively.
  We require vanishing trace Sobolev spaces as well:
  \begin{align*}
    H_0^1(\Omega) &\coloneqq \left\{ u \in H^1(\Omega) \; \big|\; u|_{\partial \Omega} = 0 \right\}, & 
    H_0^2(\Omega) &\coloneqq \left\{ u \in H^2(\Omega) \; \big|\; u|_{\partial \Omega} = 0,\; \pfpx{u}{n}|_{\partial \Omega} = 0\right\},
  \end{align*}
  where $n$ denotes the outward pointing unit vector to $\partial \Omega$.  We will leverage the fact that $u \mapsto \|\nabla u \|$ is a norm on $H_0^1(\Omega)$. 

  We will perform analysis on time-dependent functions in B{\^o}chner spaces. For a terminal time $T > 0$, let $I = (0, T)$. For $\mathcal{Z}$ a Banach space, $p \in \{1, 2, \infty\}$, we define norms on $L^p(I;\mathcal{Z})$ as,
  $$\|w\|_{L^p(I; {\cal Z})}=\left\{
  \begin{array}{ll}
  \int_0^T\|w(t)\|_{\cal Z}dt, & \mbox{when}\;p=1, \\
  \Big(\int_0^T\|w(t)\|_{\cal Z}^2dt\Big)^{\frac{1}{2}}, & \mbox{when}\;p=2, \\
  \mbox{ess}\sup_{t\in [0,T]}\|w(t)\|_{\cal Z}, & \mbox{when}\;p=\infty.
  \end{array}
  \right.$$ 
  We will use ${\partial_{t}u}$, $u_t$ or $\dot{u}$ interchangeably to denote differentiation of $u$ with respect to the temporal variable $t$. 
  In our analysis below, we let $C,K,C_{i}$ and $K_{i}$, for $i\in \mathbb{N}$ denote constants that are independent of discretization parameters, but may depend on $\Omega$ and $T$. 

  For finite element discretizations, we assume that $\Omega$ is polyhedral, and let $\mathcal{T}_h$ denote a conformal partition of $\Omega$ into simplices with controllable mesh quality, i.e.,:
  \begin{itemize}[itemsep=0pt,topsep=0pt,leftmargin=25pt]
  	\item  ${\overline \Omega = \cup_{J\in{\cal T}_h} J}$.
  	\item If $ J_1 $, $ J_2 $ $\in{\cal T}_h $ and $
  	J_1 \ne J_2 $, then either $J_1 \cap J_2  = \emptyset $ or $J_1 \cap J_2 $ is a common $m$-face of both simplices, for $0 \leq m \leq n-1$. Hence, for $n=2$ a triangle's vertex interior to $\Omega$ can only intersect other triangles at their vertices.
      \item Let $h_J$ represent the largest $1$-face length (edge length) of the simplex $J\in {\cal T}_{h}$ and suppose $h = \max_{J\in{\cal T}_{h}} h_{J}$ is the maximum size of a side of a triangulation ${\cal T}_{h}$. We assume the following about triangulation $\mathcal{T}_h$ under consideration: for any fixed $h' > 0$, there always are positive constants $K_0$ and $K_1$, such that for any triangulation $\mathcal{T}_h$ with $0 < h < h'$, we have
  	\begin{eqnarray*}
  		K_0 h  \leq \mbox{diam}(J) \leq K_1h~~\forall\; J \in{\cal T}_h,%~\forall\;
  	\end{eqnarray*}
        where $K_0$ and $K_1$ are independent of $h$.
  \end{itemize}

\subsection{The weak formulations}\label{ssec:weak}
We present the two weak formulations that we focus on in the remainder of the document.  

\paragraph{Problem $P^1$:}
Using Green's formula and  taking the $L^2$- inner product of equation \eqref{eq1*} with $\chi \in H_0^2(\Omega)$, we identify a variational formulation for \eqref{eq1*}. Problem $P^1$ is as follows: We seek a function $u\in L^{\infty}(I;H_{0}^{2}(\Omega)) \cap L^{2}(I;H_{0}^{1}(\Omega))$ with $u_t \in L^{2}(I;H^{2}(\Omega))$ that satisfies
\begin{subequations}\label{eq:P1}
\begin{align}
(u_{t}(t),\chi)+(\Delta u_{t}(t),\Delta \chi)+\alpha(\nabla u(t), \nabla \chi)&=(\nabla\cdot g(u),  \chi), \quad\forall \chi \;\in H_{0}^{2}(\Omega),\label{eq4a}\\
u(x,0)&=u_{0}(x), \quad\hspace{0.76cm} \forall\; x \;\in \Omega.\label{eq4b}
\end{align}\label{eq4}
\end{subequations}

\paragraph{Problem $P^2$:}
As done in \cite{burman2022hybrid}, introduce $p=-\Delta u$ to obtain a mixed weak formulation of the fourth-order Rosenau-Burgers model \eqref{eq1*}. With the variable $p$, \eqref{eq1*} becomes
\begin{align}
u_t-\Delta p_t+\alpha p-\nabla\cdot g(u)&=0\nonumber,\\  p+\Delta u&=0,\nonumber\\
u(x,t)=0\;=\; p(x,t)\;\;\mbox{for}\;& x \in \partial \Omega,\nonumber\\
u(x,0) = u_{0}(x),\;\mbox{for}\;&x\in\Omega.\nonumber
\end{align}
This allows us to articulate Problem $P^2$: We seek a function pair $(u,p)$ satsifying $u\in L^{\infty}(I;L^{2}(\Omega)) \cap L^{2}(I;H_{0}^{1}(\Omega))$ with $u_t\in L^{2}(I;L^{2}(\Omega))$, and $p \in L^{\infty}(I;\Xi_0)$ with $p_t\in L^{2}(I;H^{1}(\Omega))$, that satisfies
\begin{subequations}\label{eq:P2}
\begin{align}
(u_{t}(t),\chi)+(\nabla p_{t}(t),\nabla \chi)+\alpha(p(t),  \chi)&=(\nabla\cdot g(u),  \chi), \quad\forall \;\chi \;\in H_{0}^{1}(\Omega), \label{eq5a}\\
(\nabla u(t),\nabla \chi')&=(p(t),  \chi'), \quad\;\hspace{0.12cm}\forall\; \chi' \;\in H_{0}^{1}(\Omega),\label{eq5b} \\
u(x,0)&=u_{0}(x),\quad\hspace{0.74cm} \forall\; x \;\in \Omega. \label{eq5c}
\end{align}\label{eq5}
\end{subequations}
\noindent Where, $\Xi_0=\{u\in L^2(\Omega)\;:\;u|_{\partial\Omega}=0\}$; note that $\Xi_0$ requires functions $u$ to be regular enough to have traces on $\partial \Omega$. The mixed formulation $P^2$ is useful to circumvent the theoretical and computational challenges that arise in working with $H^2$ functions.
%The implementation $H^2$-conforming FEM requires the use of finite element space consisting of $C^1-$piecewise polynomials of degree greater than or equal to $3$. One way to do so is the use of Bernstein basis. However, the construction of such spaces become computationally expensive and challenging. Hence, we will use mixed weak formulation to reduce the complexity in the  implementation of numerical examples.

%\todo{Move to appendices?}
Next, we present the following lemma used throughout the article. 
\begin{lemma}\label{lemma:2.2}
  The function $u \mapsto \|u\| + \|\Delta u\|$ is a norm on $H^2(\Omega)$ and is equivalent to $\|u\|_2$.
%   The norm $\|u\|_2$ is equivalent to the norm $\|u\| + \|\Delta u\|$.
\end{lemma}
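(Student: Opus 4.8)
The plan is to prove the two-sided bound $c\,\|u\|_2 \le \|u\| + \|\Delta u\| \le C\,\|u\|_2$ for positive constants $c,C$ depending only on $\Omega$; the asserted equivalence of norms is exactly this statement. One direction is immediate from the definition $\|u\|_2^2 = \|u\|^2 + \|\nabla u\|^2 + \|\Delta u\|^2$: since both $\|u\|^2$ and $\|\Delta u\|^2$ are individual summands of $\|u\|_2^2$, we have $\|u\| + \|\Delta u\| \le 2\|u\|_2$, so $C = 2$ works. Hence all of the content lies in the reverse inequality, and because $\|u\|^2$ and $\|\Delta u\|^2$ already appear in $\|u\|_2^2$, it suffices to control the single remaining term $\|\nabla u\|$ by $\|u\| + \|\Delta u\|$.

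First I would attack this reverse bound through Green's first identity applied to $(\nabla u, \nabla u)$:
\begin{equation*}
\|\nabla u\|^2 = -(u, \Delta u) + \Big\langle u,\; \pfpx{u}{n}\Big\rangle_{\partial\Omega}.
\end{equation*}
Assuming for the moment that the boundary pairing drops out, Cauchy--Schwarz followed by Young's inequality gives $-(u,\Delta u) \le \|u\|\,\|\Delta u\| \le \tfrac12\|u\|^2 + \tfrac12\|\Delta u\|^2$, so that $\|\nabla u\|^2 \le \tfrac12\|u\|^2 + \tfrac12\|\Delta u\|^2$. Substituting back,
\begin{equation*}
\|u\|_2^2 = \|u\|^2 + \|\nabla u\|^2 + \|\Delta u\|^2 \le \tfrac32\big(\|u\|^2 + \|\Delta u\|^2\big) \le \tfrac32\big(\|u\| + \|\Delta u\|\big)^2,
\end{equation*}
which yields the reverse inequality with $c = \sqrt{2/3}$ and completes the equivalence.

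The main obstacle, and the only place genuine work is required, is justifying that the boundary term $\langle u, \partial u/\partial n\rangle_{\partial\Omega}$ can be discarded. I would handle this through the homogeneous Dirichlet data carried by the functions on which the lemma is applied, whose vanishing trace annihilates the pairing; equivalently, the reverse estimate is the standard elliptic \emph{a priori} bound $\|u\|_{H^2} \le C(\|\Delta u\| + \|u\|)$ on the bounded Lipschitz domain $\Omega$, obtained from $H^2$-regularity for the Poisson problem. The technical heart is therefore the regularity theory near $\partial\Omega$ (and, at the level of the integration-by-parts identity itself, a density argument reducing to smooth functions so that Green's identity is rigorous); once the boundary contribution is controlled, the remaining Cauchy--Schwarz and Young steps are entirely routine.
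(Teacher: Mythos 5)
Your proof is correct, but it reaches the key estimate by a different route than the paper. The paper invokes the Nirenberg (Gagliardo--Nirenberg) interpolation inequality with $p=q=r=2$, $i=1$, $j=2$, $b=\tfrac12$ to get $\|\nabla u\|\leq \|\Delta u\|^{1/2}\|u\|^{1/2}$ for $u\in H_0^2(\Omega)$, then applies Young's inequality to obtain
\begin{equation*}
\|\nabla u\|^2 \leq \tfrac12\|u\|^2+\tfrac12\|\Delta u\|^2,
\end{equation*}
which is exactly the intermediate inequality you derive instead from Green's identity, $\|\nabla u\|^2=-(u,\Delta u)$ for vanishing trace, followed by Cauchy--Schwarz and Young. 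From that point on the two arguments are essentially identical (up to immaterial constants: you get $\|u\|_2^2\leq\tfrac32(\|u\|+\|\Delta u\|)^2$, the paper gets $\tfrac12\|u\|_2^2\leq(\|u\|+\|\Delta u\|)^2$). Your route is more elementary and self-contained---it replaces a cited interpolation theorem by one integration by parts---and it has the additional virtue of making explicit \emph{where} the boundary condition enters: the statement as written on all of $H^2(\Omega)$ is actually false (on the unit disk the harmonic functions $u_n=r^n\cos n\theta$ satisfy $\Delta u_n=0$ and $\|u_n\|\to 0$ while $\|\nabla u_n\|\to\infty$), and the paper's own proof quietly restricts the Nirenberg application to $H_0^2(\Omega)$, which is the setting in which the lemma is used throughout. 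One small remark: your fallback suggestion of full elliptic $H^2$-regularity for the Poisson problem is unnecessary overkill here, since the paper's norm $\|u\|_2$ involves only $\|\Delta u\|$ rather than the full Hessian, so the direct Green's-identity bound on $\|\nabla u\|$ is all that is required; the density argument you mention to justify the identity for $u\in H^1_0$ with $\Delta u\in L^2$ is standard and suffices.
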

We provide the proof in \Cref{sec:lemma2.2-proof}.

\section{Analysis of problem $P^1$} \label{Sec P1}
In this section, we focus on Problem $P^1$ from \cref{ssec:weak}. Our first set of results in \cref{ssec:P1-exist} is well-posedness statements for the variational problem \eqref{eq:P1}. We then consider a finite element discretization of the problem and prove corresponding spatial convergence rates in \cref{ssec:P1-semi}. Finally, we discretize the time variable and provide well-posedness statements and convergence rates for a fully discrete scheme in \cref{ssec:P1-fully}.
%Our main results are a stability estimate, which we follow by an existence and uniqueness statement for the variational form \eqref{P1}.
\subsection{Well-posedness of the variational problem}\label{ssec:P1-exist}
\vspace{-0.2cm}
In this section, we shall prove the existence and uniqueness of the weak solution to equation (\ref{eq4}). Before we prove the existence and uniqueness, it is generally beneficial to know the long-term behavior of the solution to a time-dependent equation. In particular, one would like to know if the solution grows over time or if it can be constrained by given initial data. Stability estimates are very useful for this purpose.
\begin{lemma}[Stability estimate]\label{thm1}
	Let $u$ be a solution of \eqref{eq4} and suppose $u_{0} \in H_{0}^{2}(\Omega)$. Then the following property holds:
        \begin{equation}\label{eq:P1-stab-1}
	\|u(t)\|_{2}  \leq  C\|u_{0}\|_{2}  ,  \;\;t \in(0,T].
	\end{equation} 
	Moreover, there exists a constant $C_1 > 0$ such that
        \begin{equation}\label{eq:P1-stab-2}
	\|u\|_{L^{\infty}(I;L^{\infty}(\Omega))}  \leq  C_1\|u_{0}\|_{2}.  
	\end{equation}
\end{lemma}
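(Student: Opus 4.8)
The plan is to derive an energy identity by testing the weak equation \eqref{eq4a} against $\chi = u(t)$, which is admissible since $u(t)\in H_0^2(\Omega)$ for a.e.\ $t$. Thanks to the regularity $u\in L^\infty(I;H_0^2(\Omega))$ with $u_t\in L^2(I;H^2(\Omega))$, the first two terms collapse into time derivatives of energies, $(u_t,u)=\tfrac{1}{2}\tfrac{d}{dt}\|u\|^2$ and $(\Delta u_t,\Delta u)=\tfrac{1}{2}\tfrac{d}{dt}\|\Delta u\|^2$, while the dissipation term is $\alpha\|\nabla u\|^2\geq 0$. These differentiation identities are justified by a standard Lions--Magenes-type argument using the stated Bôchner regularity.

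The crux is to show that the nonlinear right-hand side $(\nabla\cdot g(u),u)$ vanishes. Integrating by parts and using $u|_{\partial\Omega}=0$ gives $(\nabla\cdot g(u),u)=-\int_\Omega g(u)\cdot\nabla u$. Since every component $g_i(u)=-(u+\tfrac{1}{2}u^2)$ is the \emph{same} scalar function, I would introduce the antiderivative $\Phi(s)=\int_0^s g_1(\tau)\,d\tau=-\left(\tfrac{1}{2}s^2+\tfrac{1}{6}s^3\right)$, so that $g(u)\cdot\nabla u=\sum_{i}\partial_{x_i}\Phi(u)$. The divergence theorem then converts $\int_\Omega\sum_i\partial_{x_i}\Phi(u)$ into a boundary integral of $\Phi(u)$, which vanishes because $u=0$ on $\partial\Omega$ and $\Phi(0)=0$. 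The embedding $H^2(\Omega)\hookrightarrow L^\infty(\Omega)$, valid for $n\leq 2$, guarantees $\Phi(u)\in W^{1,1}(\Omega)$, so this manipulation is legitimate. Consequently the energy identity reduces to $\tfrac{1}{2}\tfrac{d}{dt}\left(\|u\|^2+\|\Delta u\|^2\right)+\alpha\|\nabla u\|^2=0$.

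Discarding the nonnegative dissipation term and integrating in time yields $\|u(t)\|^2+\|\Delta u(t)\|^2\leq\|u_0\|^2+\|\Delta u_0\|^2$. Invoking the norm equivalence of \cref{lemma:2.2} on both sides converts this into $\|u(t)\|_2\leq C\|u_0\|_2$, establishing \eqref{eq:P1-stab-1}. For \eqref{eq:P1-stab-2}, I would again use the Sobolev embedding $H^2(\Omega)\hookrightarrow L^\infty(\Omega)$ (valid since $n\leq 2$), giving $\|u(t)\|_{L^\infty(\Omega)}\leq C\|u(t)\|_2\leq C_1\|u_0\|_2$ uniformly in $t$; taking the essential supremum over $t\in I$ delivers the claim.

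I expect the main obstacle to be the rigorous justification of the nonlinear cancellation, specifically verifying that the regularity of $u$ supports the antiderivative/divergence-theorem argument and the vanishing trace of $\Phi(u)$. Once this skew-symmetric structure is established, the remaining steps, namely the energy differentiation, the Grönwall-free integration in time, and the two functional-analytic embeddings, are routine.
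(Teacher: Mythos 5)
Your proposal is correct and follows essentially the same route as the paper's proof: testing with $\chi = u$, eliminating the nonlinear term via the antiderivative/divergence-theorem cancellation (your scalar $\Phi$ plays exactly the role of the paper's vector antiderivative $G$), integrating in time, and invoking \cref{lemma:2.2} together with the embedding $H^2(\Omega)\hookrightarrow L^\infty(\Omega)$. Your added care in justifying the time-differentiation identities and the $W^{1,1}$ regularity of $\Phi(u)$ only strengthens the argument.
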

\begin{proof}
Setting $\chi = u$ in the equation \eqref{eq4}, we have
\begin{equation}	
\dfrac{1}{2} \dfrac{\mathrm{d}}{\mathrm{d}t}\Big[\|u\|^{2} + \|\Delta u\|^{2}\Big]  +   \alpha \|\nabla u\|^{2}  = \int_{\Omega} (\nabla\cdot g(u))ud\Omega .\label{eqn5*}
\end{equation}
Now, let us define 
$ G(s):= \int_{0}^{s} g(\omega)d\omega$  with $G^{'}(s) =g(s),\;\; s \in \mathbb{R}$
such that
\begin{equation}
\nabla\cdot G[u(x,t)]= g[u(x,t)]\cdot\nabla u(x,t). \label{eqn6}
\end{equation}
Now, by applying the divergence theorem \cite{quarteroni2009numerical} with $G(0)=0$ and $u=0$ on $\partial\Omega$, we get 
\begin{equation}
\int_{\Omega} (\nabla\cdot g(u))u d\Omega = - \int_{\Omega} g(u)\cdot \nabla u d\Omega = - \int_{\Omega} \nabla\cdot G(u)d\Omega = 0. \label{eqn7}
\end{equation}
Using equations \eqref{eqn7} in \eqref{eqn5*} and noting that $\alpha > 0$, we obtain %with a positive parameter $\alpha$, we get
$$ \dfrac{\mathrm{d}}{\mathrm{d}t}\Big[\|u\|^{2} + \|\Delta u\|^{2}\Big]\leq 0 . $$
Integrating the above with respect to time $t$ results in
$$\|u\|^{2} + \|\Delta u\|^{2} \leq \|u_{0}\|_{2}^2.$$
Combining this with \cref{lemma:2.2} implies
\begin{equation}
\|u(t)\|_{2}  \leq  C\|u_{0}\|_{2}  ,  \;\;t \in(0,T],
\end{equation} 
which proves \eqref{eq:P1-stab-1}. Finally, using the fact that $H^{2}(\Omega)$ is embedded in $L^{\infty}(\Omega)$ (cf. Sobolev’s Inequality, \cite{quarteroni2009numerical}) in \eqref{eq:P1-stab-1} proves \eqref{eq:P1-stab-2}.
\end{proof}
%complete the rest of the proof.$\;\;\Box$

\begin{theorem}[Problem $P^1$ existence and uniqueness]\label{thm:P1-wellposed} Assume that $u_{0} \in H_{0}^{2}(\Omega)$. Then for $T>0$, there exists a unique weak solution to the problem (\ref{eq1*}) in the sense of the weak formulation \eqref{eq4}.
\end{theorem}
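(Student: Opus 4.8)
The plan is to prove existence by a Faedo--Galerkin approximation and uniqueness by an energy estimate combined with Grönwall's inequality, using throughout the a priori bounds already established in \cref{thm1}. For existence, I would first fix a basis $\{\phi_j\}_{j\geq 1}$ of $H_0^2(\Omega)$ (for instance the eigenfunctions of the biharmonic operator under the prescribed boundary conditions), set $V_m = \mathrm{span}\{\phi_1,\dots,\phi_m\}$, and seek an approximant $u_m(t) = \sum_{j=1}^m c_j(t)\phi_j$ solving \eqref{eq4a} restricted to test functions $\chi \in V_m$, with $u_m(0)$ the $V_m$-projection of $u_0$. The coefficient matrix multiplying $c'(t)$ has entries $(\phi_i,\phi_j) + (\Delta\phi_i,\Delta\phi_j)$, which by \cref{lemma:2.2} is symmetric positive definite; hence the system can be written as $c' = F(c)$ with $F$ locally Lipschitz (the nonlinearity $g$ is a fixed polynomial), and the Picard--Lindelöf theorem yields a unique local-in-time solution.

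Next I would derive $m$-uniform a priori estimates by repeating the computation of \cref{thm1} at the discrete level. Testing with $\chi = u_m$ and invoking the structural identity \eqref{eqn7} (valid since $u_m \in V_m \subset H_0^2$) gives $\|u_m(t)\|^2 + \|\Delta u_m(t)\|^2 \leq \|u_0\|_2^2$, so the approximants cannot blow up and the local solutions extend to all of $[0,T]$. To obtain compactness in time I would additionally bound $u_m'$: testing with $\chi = u_m'$, controlling $\|\nabla u_m'\|^2 \leq \tfrac12(\|u_m'\|^2 + \|\Delta u_m'\|^2)$ through $\|\nabla v\|^2 = -(\Delta v, v)$, and estimating the nonlinear term via $\|\nabla\cdot g(u_m)\| \leq C(1 + \|u_m\|_{L^\infty})\|\nabla u_m\|$ together with the $L^\infty$ bound \eqref{eq:P1-stab-2}, yields a uniform bound on $\|u_m'\|_2$. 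Thus $\{u_m\}$ is bounded in $L^\infty(I;H_0^2(\Omega))$ and $\{u_m'\}$ in $L^\infty(I;H^2(\Omega))$, which already matches (indeed exceeds) the regularity demanded of the solution.

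With these bounds in hand I would extract a weak-$*$ convergent subsequence and pass to the limit. The linear terms pass directly under weak convergence; the main obstacle is the nonlinear term $(\nabla\cdot g(u_m),\chi)$, which is not weakly continuous. To handle it I would invoke the Aubin--Lions--Simon lemma, using the compact embedding $H^2(\Omega)\hookrightarrow\hookrightarrow H^1(\Omega)$, to upgrade to strong convergence $u_m \to u$ in $L^2(I;H^1(\Omega))$; combining this with the uniform $L^\infty$ bound shows $g(u_m)\to g(u)$ and hence $(\nabla\cdot g(u_m),\chi)\to(\nabla\cdot g(u),\chi)$ for each fixed $\chi$, after which a density argument recovers the identity for all $\chi \in H_0^2(\Omega)$. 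I would separately verify $u_m(0)\to u_0$ to identify the initial condition. This compactness step is the crux, since the quadratic term requires strong convergence to pass to the limit.

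Finally, for uniqueness, suppose $u_1,u_2$ both solve \eqref{eq4} with identical data and set $w = u_1 - u_2$. Subtracting the equations and testing with $\chi = w$ gives
\[
\tfrac12\tfrac{\mathrm d}{\mathrm dt}\bigl(\|w\|^2 + \|\Delta w\|^2\bigr) + \alpha\|\nabla w\|^2 = \bigl(\nabla\cdot(g(u_1)-g(u_2)),\,w\bigr).
\]
Integrating the right-hand side by parts and using $g(u_1)-g(u_2) = -\bigl[w + \tfrac12(u_1+u_2)w\bigr]\mathbf{1}$ together with the $L^\infty$ bounds from \cref{thm1}, the right-hand side is controlled by $C\|w\|\,\|\nabla w\|$; absorbing $\|\nabla w\|^2$ into the left via Young's inequality leaves $\tfrac{\mathrm d}{\mathrm dt}(\|w\|^2 + \|\Delta w\|^2) \leq C(\|w\|^2 + \|\Delta w\|^2)$. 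Since $w(0)=0$, Grönwall's inequality forces $w\equiv 0$, establishing uniqueness.
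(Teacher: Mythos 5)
Your proposal is correct and follows essentially the same route as the paper's proof: a Faedo--Galerkin approximation in $H_0^2(\Omega)$, $m$-uniform energy estimates via the identity \eqref{eqn7}, a time-derivative bound, an Aubin--Lions compactness step to handle the quadratic nonlinearity in the limit, and an energy/Gr\"onwall argument for uniqueness. The only cosmetic differences are that you run Aubin--Lions through the embedding $H^2(\Omega)\hookrightarrow\hookrightarrow H^1(\Omega)$ where the paper uses $H_0^2(\Omega)\hookrightarrow\hookrightarrow L^2(\Omega)$ together with the rewriting $(\nabla\cdot g(u^m),\chi)=-(g(u^m),\nabla\chi)$, and that you record an $L^\infty$-in-time bound on $u_m'$ where the paper only states the integrated $L^2$-in-time bound; both variants are valid.
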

%\vspace{-0.2cm}
\begin{proof}
We shall prove the theorem in the following six parts:
%\vspace{-0.2cm}
\begin{enumerate}[leftmargin=13pt]
  \item \textit{Compression of the weak problem:} Let  $\{\phi_{j}\}^{\infty}_{j=1}$ be any orthonormal basis of $H_{0}^{2}(\Omega)$ and define $\digamma^{m}$ = span$\{\phi_{1},\phi_{2},\ldots,\phi_{m}\}$ for any $m \in \N$. Fixing $m$, we define the following compression of \eqref{eq:P1}: Let $u^m: [0,T] \rightarrow \digamma^m$ be given by
%    Introduce $u^m:[0,T]\to H^2_0(\Omega)$, (where the $m$ superscript is an index) to be,
%	\vspace{-0.2cm}
	\begin{equation}
	u^{m}(t) = \sum_{j=1}^{m} d_m^j(t)\phi_{j}, \label{eq30}
	\end{equation}	
        where an evolution for the $d_m^j$ is prescribed by a finite-dimensional version of \eqref{eq:P1}:
%	as a solution of 
	\begin{align}
	(u_{t}^m(t),\phi_j)+(\Delta u_{t}^m(t),\Delta \phi_j)+\alpha(\nabla u^m(t), \nabla \phi_j)=(\nabla\cdot g(u^m),  \phi_j),  \label{eq31}
	\end{align}
	with $(u^m(0), \phi_j)=(u_{0}, \phi_j),\;\;\;\mbox{for} \;\phi_j \in \digamma^{m}, j=1,2,\ldots, m.$ 
%	$ \mbox{Here,}\;u^{m}(0) = \sum_{j=1}^{m} d_m^j(0)\phi_{j},\; d_m^j(0)=(u_0, \phi_j),\;j=1,2,\ldots m.$
        \eqref{eq31} is a nonlinear system of ordinary differential equations, and the Picard existence theorem guarantees that there exists a $T^*$ with $0<T^*<T$ such that $u^m(t) \in \digamma^m$ for $t\in [0,T^*].$ Our next goal is to extend the time $T^*$ to $T$.%, which we will demonstrate in the following steps.
%	Once we use equation (\ref{eq30}) in (\ref{eq31}), we will get a nonlinear system of ODE. Here, applying the Picard existence theorem guarantees that there exists a $T^*$ with $0<T^*<T$ such that the above obtained nonlinear system has a local solution $u^m$ in the subspace $\digamma^m$ for a.e. $t\in [0,T^*].$ Our next goal is to extend the time $T^*$ to $T$, which we will demonstrate in the following steps.
%	\vspace{-0.1cm}
      \item \textit{$H^2$ stability estimate:} Multiplying \eqref{eq31} by $d_m^j(t)$ and then summing over $j \in [m]$ yields
	\begin{align}
          (u_{t}^m(t),u^m(t))+(\Delta u_{t}^m(t),\Delta u^m(t))+\alpha(\nabla u^m(t), \nabla u^m(t)) &= (\nabla\cdot g(u^m),  u^m(t)),  \nonumber\\
          &\Downarrow\nonumber\\
          \frac{1}{2}\frac{\mathrm{d}}{\mathrm{d}t}\big[\|u^m(t)\|^2+\|\Delta u^m(t)\|^2\big] +\alpha \|\nabla u^m(t)\|^2&=(\nabla\cdot g(u^m),  u^m(t)).\label{eq32}
	\end{align}
%	Now, let us define 
%	$ G(s):= \int_{0}^{s} g(z)dz$  with $G^{'}(s) =g(s),\;\; s \in \mathbb{R}$
%	such that
%	\begin{equation}
%	\nabla\cdot G(v^m)= g(v^m)\cdot\nabla v^m. \noN
%	\end{equation}
%	Using the divergence theorem \cite{quarteroni2009numerical} with boundary condition (\ref{eq2}) and $G(0)=0$ in equation \eqref{eq32}, we have \vspace{-1.5mm}
%	\begin{equation}
%	\int_{\Omega} (\nabla\cdot g(v^m))v^m d\Omega = - \int_{\Omega} g(v^m)\cdot \nabla v^m d\Omega = - \int_{\Omega} \nabla\cdot G(v^m) = 0.\vspace{-1.5mm} \label{eq33}
%	\end{equation}

	Integrating \eqref{eq32} from $0$ to $t$ and using \eqref{eqn7} yields
	%\vspace{-0.2cm}
	\begin{equation}
	\|u^m(t)\|^2+\|\Delta u^m(t)\|^2 +\alpha\int_{0}^{t} \|\nabla u^m(s)\|^2ds \leq\|u_0\|_{2}^2, \nonumber
	\end{equation}
	where we have used the fact that $\|u^m(0)\|_2 \leq \|u_0\|_2$. 
	Finally, using \cref{lemma:2.2} and taking the supremum over $0\leq t\leq T$ yields
	\begin{equation}
	\sup_{0\leq t\leq T}\|u^m(t)\|_{2}^2 +\alpha\int_{0}^{T} \|\nabla u^m(t)\|^2dt\leq\|u_0\|_{2}^2.  \label{eq37}
	\end{equation}
        We emphasize that the right-hand side above is independent of $m$.
%	The above equation confirms that the right hand side is independent of $m$.
	\item \textit{Estimate of time derivative:}
	From \eqref{eq37}, we have
	%\vspace{-0.2cm}
        \begin{equation*}
	\|u^m\|_{L^{\infty}(I;H_{0}^{2}(\Omega))}\leq C_1\,\{\|u_0\|_{2}\},%\vspace{-0.1cm}$$
        \end{equation*}
	where $C_1$ is a positive constant depending on $\|u_0\|_{2}$. 
	From the above, we have the following
	%\vspace{-0.1cm}
	\begin{equation}
	\|\nabla u^{m}\|_{L^{\infty}(I;L^{2}(\Omega))} \leq \| u^{m}\|_{L^{\infty}(I;H_{0}^{2}(\Omega))} \leq C_1 \{\|u_0\|_{2}\}. \label{eq38}
	\end{equation}
	Also, we have 
          {\small
	\begin{align}
	\|g(u^{m})\|^{2}=2\int_{\Omega} \Big(\frac{1}{2}(u^{m})^{2} + u^{m} \Big)^{2} d x %\noN\\
          &\leq 2\bigg( \frac{1}{2} \int_{\Omega} (u^{m})^{4} d x +2\int_{\Omega} (u^{m})^{2} d x \bigg)% \noN\\
	\leq C_2 \{\|u_0\|_{2}\}, \label{eq39}
	\end{align}}
        where the last inequality uses the %embedding 
        result $\|u^{m}(t)\|_{L^{p}(\Omega)} \leq C_{3}\|u^{m}(t)\|_{H^{2}(\Omega)}$ for $\;1 \leq p\leq\infty$ \cite{quarteroni2009numerical}. Note that $C_2$ is a positive constant depending on $\|u_0\|_{2}$. Consequently,  $g(u^{m})$ is bounded in  $L^{\infty}(I;L^{2}(\Omega)).$
	
        To estimate ${u_t^m}$, we multiply \eqref{eq31} by $d_{m,t}^j(t)$ (the $t$ derivative of $d_m^j(t)$) and then sum over all $j \in [m]$, obtaining
	\begin{align}
          (u_{t}^m(t),u_{t}^m(t))+(\Delta u_{t}^m(t),\Delta u_{t}^m(t))&=-\alpha(\nabla u^m(t), \nabla u_{t}^m(t)) -( g(u^m), \nabla u_{t}^m(t)). \label{eq83}
	\end{align}
	Using Young's inequality and Cauchy-Schwarz inequality on the right-hand side above, and using \cref{lemma:2.2} on the left-hand side, there is a constant $C_4$ such that
	$$\|u^{m}_t(t)\|_2^2\leq C_4(\|g(u^{m})\|^{2}+\|\nabla u^{m}(t)\|^2 ).$$
        Integrating the above equation from $0$ to $T$ and using the bounds \eqref{eq38}-\eqref{eq39}, there is a positive constant $C_5$ depending on $\|u_0\|_{2}$ such that
	\begin{equation}
	\int_{0}^{T}\|u^{m}_t(t)\|_2^2dt\leq C_5\{\|u_0\|_{2}\} .\label{eq40}
	\end{equation}
	\item \textit{Passing to the limit:} The estimates \eqref{eq37} and \eqref{eq40} show that certain norms of $u^m$ are independent of $m$, and hence the sequence $\{u^m\}_{m=1}^{\infty}$ is uniformly bounded. Consequently, by the Banach-Alaoglu theorem \cite{chung1994finite,brezis2011functional}, one can extract a subsequence $\{u^{m_k}\}_{k=1}^{\infty}$ of $\{u^m\}_{m=1}^{\infty}$ such that 
	\begin{align}
          u^{m_{k}} &\xrightarrow{\textrm{weak}-\ast}
         % \overset{w^*}{\longrightarrow} 
          u \;\;\textrm{in}\;\; L^{\infty}(I; H_{0}^{2}(\Omega)), \nonumber\\
          u^{m_{k}} &\xrightarrow{\textrm{weakly}}
          %\overset{w}{\longrightarrow} 
          u \;\;\textrm{in}\;\; L^{2}(I; H_{0}^{1}(\Omega)), \nonumber\\
          u^{m_{k}}_t &\xrightarrow{\textrm{weakly}}
          %\overset{w}{\longrightarrow} 
          u_t \;\;\textrm{in}\;\; L^{2}(I; H^{2}(\Omega)),\label{eq41}	
	\end{align}
        for some function $u$.
	In addition, due to the Rellich–Kondrachov theorem, the embedding of $H_{0}^{2}(\Omega)$ is compact in $L^{2}(\Omega)$ \cite{evans2022partial, taylor1996partial}, hence we can use the Aubin-Lions compactness lemma \cite{simon1986compact} to assert the following strong convergence
	\begin{eqnarray}
          &&u^{m_{k}} \longrightarrow \;u \;\;\textrm{in}\;\; L^{2}(I; L^{2}(\Omega)).\label{eq41*}
	\end{eqnarray}
        We now consider the subsequence of problems \eqref{eq41} for $m \gets m_k$, and take $k \uparrow \infty$. To accomplish this, 
%	Now, we shall try to pass the limit in (\ref{eq31}) along the subsequence $\{u^{m_{k}}\}_{k=1}^{\infty}$. To do the task, let us take a 
define a function $\chi\in C^1([0,T]; H_0^2(\Omega))$ with the expression $\chi(t) = \sum_{j=1}^{M} f_m^j(t)\phi_{j}$, where $\{f_m^j(t)\}_{j=1}^{M}$ are smooth functions. Now, choosing $m\geq M$, multiplying equation \eqref{eq31} by $f_m^j(t)$, summing over $j \in [M]$,  and then integrating it from $0$ to $T$, we obtain 
{\footnotesize
	\begin{align}
	\int_{0}^{T}\bigg[(u_{t}^m(t),\chi(t))+(\Delta u_{t}^m(t),\Delta \chi(t))+\alpha(\nabla u^m(t), \nabla \chi(t))\bigg]dt%\nonumber%\\&& 
         & =\int_{0}^{T}\bigg[(\nabla \cdot g(u^m),  \chi(t))\bigg]dt.  \label{eq42}
	\end{align}
        }
	We now choose $m=m_k$ and use the convergence statements in \eqref{eq41} and \eqref{eq41*} to obtain
        {\small
	\begin{align}
	\int_{0}^{T}\bigg[(u_{t}(t),\chi(t))+(\Delta u_{t}(t),\Delta \chi(t))+\alpha(\nabla u(t), \nabla \chi(t))\bigg]dt%\nonumber\\&& 
          &=\int_{0}^{T}\bigg[(\nabla \cdot g(u),  \chi(t))\bigg]dt . \label{eq43}
	\end{align}
        }
        A density argument ($M$ was arbitrary) asserts that \eqref{eq43} is true for every $\chi \in {L^{2}(I; H_0^2(\Omega))}$. Hence, the function $u$ satisfies \eqref{eq4} for $\chi \in H_0^2(\Omega),$ and $t\in[0,T]$.
	\item \textit{Initial data:} We seek to establish that $u(0)=u_0$. Integration by parts on \eqref{eq43} yields
	\begin{align}
          \int_{0}^{T}\bigg[(-u(t),\chi_t(t)) &+(\Delta u_{t}(t),\Delta \chi(t))+\alpha(\nabla u(t), \nabla \chi(t))\bigg]dt\nonumber\\%\nonumber\\&& 
          &=\int_{0}^{T}\bigg[(\nabla \cdot g(u),  \chi(t))\bigg]dt+(u(0),\chi(0)), \label{eq44}
	\end{align}
	for every $\chi\in C^1([0,T]; H_0^2(\Omega))$ with $\chi(T)=0$. Applying a similar exercise to \eqref{eq42} yields
	\begin{align}
          \int_{0}^{T}\bigg[-(u^m(t),\chi_{t}(t)) &+ (\Delta u_{t}^m(t),\Delta \chi(t))+\alpha(\nabla u^m(t), \nabla \chi(t))\bigg]dt \nonumber\\%\nonumber\\&& 
          &=\int_{0}^{T}\bigg[(\nabla \cdot g(u^m),  \chi(t))\bigg]dt+(u^m(0),\chi(0)).  \label{eq45}
	\end{align}
	Again using \eqref{eq41} and \eqref{eq41*} with $m=m_k$, taking the limit along the subsequence $\{u^{m_{k}}\}_{k=1}^{\infty}$, and using $u^m(0)\to u_0$ as $m \to\infty$ in the above leads to
	\begin{align}
	&&\int_{0}^{T}\bigg[-(u(t),\chi_{t}(t))+(\Delta u_{t}(t),\Delta \chi(t))+\alpha(\nabla u(t), \nabla \chi(t))\bigg]dt\nonumber\\&& =\int_{0}^{T}\bigg[(\nabla \cdot g(u),  \chi(t))\bigg]dt+(u_0,\chi(0)).  \label{eq46}
	\end{align}
        As $\chi(0)$ is arbitrary, comparing equation (\ref{eq44}) with (\ref{eq46}), we deduce that $u(0)=u_0$. We have thus established the existence of a solution $u$ to \eqref{eq:P1}, Problem $P^1$.
	\item \textit{Uniqueness:} Let us assume that $u_1$ and $u_2$ are two weak solutions to \eqref{eq4}. Setting $W:=u_1-u_2$, leads to 
	\begin{eqnarray}\left\{
	\begin{array}{ll}
	(W_{t}(t),\chi)+(\Delta W_{t}(t),\Delta \chi)=-\alpha(\nabla W(t), \nabla \chi) 
          %\\~~~~~~~~~~~~~~~~~~~~~~~~~~~~~~~~~~~~~~~~~~~~~~
          -( g(u_1)-g(u_2),\nabla  \chi),\\
	W(0)={0}.
	\end{array}
	\right.\label{eq47}
	\end{eqnarray}
	Taking $\chi=W$ in (\ref{eq47}) and using Young's inequality, the Cauchy–Schwarz inequality, and the stability estimate for $g$ yields
	$$\frac{\mathrm{d}}{\mathrm{d}t}\big[\|W(t)\|^2+\|\Delta W(t)\|^2\big] \leq C\|W(t)\|_2^2.$$
	We integrate the above with respect to time $t$ to obtain
	$$\|W(t)\|^2+\|\Delta W(t)\|^2\leq\|W(0)\|^{2}_{2} + C\int_{0}^{t}\|W(s)\|^{2}_{2}  ds.$$
	Combining this with \cref{lemma:2.2} yields
		$$\|W(t)\|_2^2\leq C_1\|W(0)\|^{2}_{2} + C_2\int_{0}^{t}\|W(s)\|^{2}_{2}  ds.$$
        Finally, Gr{\"o}nwall’s lemma implies
	$$\|W(t)\|^{2}_{2} \leq C_1e^{C_2t}\|W(0)\|^{2}_{2}=0.$$
	Hence, $ u_{1} = u_{2}.$%\;\;\Box$ 
\end{enumerate}
\end{proof}

\subsection{Semidiscrete Error Estimates }\label{ssec:P1-semi}
We now proceed to discretize space and prove error estimates for a semidiscrete scheme. %  This section aims to achieve the error estimates for the spatially discrete scheme.
Let $S_h$ be a finite-dimensional subspace of $H_0^2(\Omega)$ over a spatial mesh having small mesh parameter $h$ (i.e., $0<h<1$), with the following property (see \cite{ciarlet2002finite}) for some $r > 3$:
\begin{equation}
\inf_{\phi \in S_{h}}\|u-\phi\|_{2} \leq Kh^{r-2}\|u\|_{r},
\;\;u \in H_{0}^{2}(\Omega) \cap H^{r}(\Omega). \label{eqn13}
\end{equation}
Here, $r$ is the order of accuracy of $S_h$, coinciding with the demand on Sobolev smoothness for $u$. Generally, $S_h$ is comprised of piecewise polynomials having degree at most $r-1$ \cite[Page 4]{thomee2007galerkin}. In the sequel, if $J$ is a closed region in $\R^n$, we define
\begin{align*}
  P_r(J) \coloneqq \left\{ u: J \rightarrow \R \;\; \big|\;\; u \textrm{ is a polynomial of degree at most $r$} \right\}.
\end{align*}
A particular example of $S_h$ is the subspace $U_h$, defined as
%, we can consider the following  subspace $U_{h}$  of  $H_{0}^{2}(\Omega)\cap H^{4}(\Omega)$ based on a triangulation ${\cal T}_h $ done in section \ref{discretization}:
{\small
\begin{align*}
  U_{h} = \left\{u_{h} \in C^{1}(\overline{\Omega})\; \big|\; u_{h}|_J \in P_{3}(J), \textrm{ for every } J \in {\cal T} _{h}, \textrm{ and }
  \;u_{h}|_{\partial \Omega}=0 ,\;\pfpx{u_h}{n}\big|_{\partial \Omega} = 0 
  \right\}, %\subset H_{0}^{2}(\Omega),\cap H^{4}(\Omega),
  %\dfrac{\partial u_{h}}{\partial \nu} = 0\;\mbox{on}\;\partial \Omega,\;
\end{align*}
}
where $\mathcal{T}_h$ is a triangulation  as defined in \cref{ssec:notation}.
%\todo{Why is $U_h \subset H^4$ as defined? I see why it's a subset of $H^2$, but it seems functions need only be $C^1$ across boundaries, so after you take 2 derivatives you generally have a discontinuous function, so I don't see how the 4th derivative is $L^2$.}
%\todo{\textcolor{blue}{Though I have not gone into the in depth proof, the proof is given in \cite{ciarlet2002finite}. Also, the same has been used by many authors. }}
In what follows we will work with the general subspace $S_h$, with the understanding that $U_h$ is the primary exemplar. Next, we define the bilinear map $A(\cdot, \cdot):H^2_0(\Omega)\times H^2_0(\Omega)\to \mathbb{R}$ as
\begin{equation}
A(u_{1},u_{2}):= (\Delta u_{1},\Delta u_{2}), \;\forall\; u_{1},u_{2} \in H_{0}^{2}(\Omega). \label{eqn48}
\end{equation}
The following properties for the bilinear map $A$ can be directly verified:
\begin{enumerate}[leftmargin=13pt]
	\item {\bf $H^2$-Boundedness:} The Cauchy–Schwarz inequality implies that there is a positive constant $C_1$ such that 
	\begin{equation}
	| A(u_{1},u_{2})| \leq C_1 \|u_{1}\|_{2} \|u_{2}\|_{2}, \; u_{1},u_{2} \in H_{0}^{2}(\Omega). \label{eqn16}
	\end{equation}
	\item {\bf $H^2$-Coercivity:} There exists a positive constant $C_2$ such that
	\begin{equation}
	A(u_{1},u_{1}) \geq C_2 \|u_{1}\|^{2}_{2}, \; u_{1} \in H_{0}^{2}(\Omega). \label{eqn17}
	\end{equation}
\end{enumerate}
Next, consider the $H^2$-orthogonal projection $\Pi_h$ onto the subspace $S_h$ satisfying the following property: 
\begin{equation}
A(u - \Pi_{h}u, \chi)=0 ,\;\quad \chi \in S_{h}, u \in H_{0}^{2}(\Omega) \cap H^{r}(\Omega).\label{eqn18}
\end{equation} 
Finally, we define the following continuous time semidiscrete Galerkin approximation to the problem \eqref{eq1*}:
Find $u_h : [0,T] \to S_h$ such that
\begin{eqnarray}\left\{
\begin{array}{ll}
(u_{t}^h(t),\chi)+(\Delta u_{t}^h(t),\Delta \chi)+\alpha(\nabla u^h(t), \nabla \chi)=-( g(u^h),\nabla  \chi),\\
(u^h(0),\chi)=(u_0^h,\chi),
\end{array}
\right.\label{eq77}
\end{eqnarray}
for all $\chi \in S_h$, where $u_0^h \in S_h$ is a suitable approximation to $u_0$, e.g., $u_0^h = \Pi_h u_0$.

\subsubsection{Preliminary lemmas}
We present two intermediate lemmas that are useful for proving our main result.
%Before proceeding further, we derive the following two important lemmas, which are useful for later error analysis. 
\begin{lemma}\label{lm1}
	Assume that $u(t), u_t(t)\in H_{0}^{2}(\Omega) \cap H^{r}(\Omega)$ for $t \in [0,T]$, then the following properties holds: 
	\begin{align}
	&\|u(t)- \Pi_{h}u(t)\|_{2} \leq   {K_1}h^{r-2}\|u(t)\|_{r}, \label{eqn25}\\
	&\|u_t(t)- \Pi_{h}u_t(t)\|_{2} \leq   {K_2}h^{r-2}\|u_t(t)\|_{r}.\label{eqn25*}
	\end{align}
	Where $r$ is the order of accuracy of $S_h$ defined above.
\end{lemma}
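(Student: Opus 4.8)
The plan is to recognize both estimates as instances of a single C\'ea-type quasi-optimality bound for the $A$-orthogonal projection $\Pi_h$, and then invoke the approximation property \eqref{eqn13}. First I would establish the abstract inequality $\|u - \Pi_h u\|_2 \leq (C_1/C_2)\inf_{\chi \in S_h}\|u - \chi\|_2$ for any $u \in H_0^2(\Omega) \cap H^r(\Omega)$, using only the boundedness \eqref{eqn16} and coercivity \eqref{eqn17} of $A$ together with the defining orthogonality \eqref{eqn18}.

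To prove the abstract bound, fix an arbitrary $\chi \in S_h$. Since $\Pi_h u - \chi \in S_h$, the orthogonality relation \eqref{eqn18} gives $A(u - \Pi_h u, \Pi_h u - \chi) = 0$, so that $A(u - \Pi_h u, u - \Pi_h u) = A(u - \Pi_h u, u - \chi)$. Applying coercivity on the left and boundedness on the right yields
\[
C_2 \|u - \Pi_h u\|_2^2 \leq A(u - \Pi_h u, u - \Pi_h u) = A(u - \Pi_h u, u - \chi) \leq C_1 \|u - \Pi_h u\|_2 \,\|u - \chi\|_2.
\]
If $\|u - \Pi_h u\|_2 = 0$ the desired bound is trivial; otherwise dividing through by $\|u - \Pi_h u\|_2$ and taking the infimum over $\chi \in S_h$ gives the abstract estimate.

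With the abstract bound in hand, \eqref{eqn25} follows by controlling the infimizing $\chi$ via \eqref{eqn13}, which bounds $\inf_{\chi \in S_h}\|u - \chi\|_2 \leq K h^{r-2}\|u\|_r$; this yields $K_1 = C_1 K / C_2$. The estimate \eqref{eqn25*} is then immediate: since $u_t(t) \in H_0^2(\Omega) \cap H^r(\Omega)$ by hypothesis, applying the identical argument with $u_t(t)$ in place of $u(t)$ gives \eqref{eqn25*} with $K_2 = C_1 K / C_2$. Equivalently, one may differentiate \eqref{eqn18} in time to see that $\partial_t \Pi_h u = \Pi_h u_t$, since $A$ and the basis of $S_h$ are time-independent, and then reuse \eqref{eqn25} directly.

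I anticipate no genuine obstacle here, as the result is the standard quasi-optimality of a coercive, bounded Galerkin projection. The only points requiring mild care are the degenerate case $\|u - \Pi_h u\|_2 = 0$ and, for the second estimate, the verification that the projection commutes with $\partial_t$ (or, more simply, the observation that the whole argument applies verbatim to $u_t$ under the stated regularity hypothesis).
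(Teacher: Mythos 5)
Your proposal is correct and follows essentially the same route as the paper: the C\'ea-type quasi-optimality bound via coercivity \eqref{eqn17}, orthogonality \eqref{eqn18}, and boundedness \eqref{eqn16}, followed by the approximation property \eqref{eqn13}, with \eqref{eqn25*} obtained by differentiating \eqref{eqn18} in time (equivalently, applying the argument to $u_t$). Your extra care with the degenerate case $\|u - \Pi_h u\|_2 = 0$ is a minor refinement the paper glosses over, but the arguments are otherwise identical.
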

\begin{proof}
We apply the coercivity property \eqref{eqn17} and orthogonal projection \eqref{eqn18} to obtain
\begin{eqnarray}
C_1 \|u - \Pi_{h}u\|^{2}_{2}&\leq & A(u - \Pi_{h}u, u - \Pi_{h}u)\nonumber\\
& = & A(u - \Pi_{h}u, u ) = A(u - \Pi_{h}u, u - \phi), \; \phi \in S_h.\nonumber
\end{eqnarray}
The boundedness property \eqref{eqn16} implies 
\begin{equation}
\|u - \Pi_{h}u\|_{2} \leq K\inf_{\phi \in S_{h}}\| u - \phi\|_{2}. 
\end{equation} 
Finally, we use \eqref{eqn13} to obtain
\begin{eqnarray}
\|u - \Pi_{h}u\|_{2} \leq K_{1}h^{r-2}\|u\|_{r}. \label{eqn22}
\end{eqnarray}
Differentiating \eqref{eqn18} in time and performing a similar analysis results in \eqref{eqn25*}.
\end{proof}

\begin{lemma}[Stability estimate]\label{lm2}
	Let $u^h(t)$ be a solution of \eqref{eq77}. Then the following estimate holds:
        \begin{equation}\label{eq:P1-semi-stab-1}
	\|u^h(t)\|_{2}  \leq  C\|u_{0}\|_{2}  ,  \;\;t \in(0,T].
	\end{equation} 
	Moreover, there exists a constant $C_1 > 0$ such that
	\begin{equation}\label{eq:P1-semi-stab-2}
          \|u^h\|_{L^{\infty}([0,T];L^{\infty}(\Omega))}  \leq  C_1\|u_{0}\|_{2}.  
	\end{equation}
\end{lemma}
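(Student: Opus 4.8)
The plan is to mimic almost verbatim the energy argument used for the continuous-level stability estimate in \cref{thm1}, exploiting the fact that $S_h$ is a conforming subspace of $H_0^2(\Omega)$. First I would test the semidiscrete equation \eqref{eq77} with the admissible choice $\chi = u^h(t) \in S_h$. The first two terms on the left-hand side collapse to $\frac{1}{2}\frac{\mathrm{d}}{\mathrm{d}t}\big[\|u^h(t)\|^2 + \|\Delta u^h(t)\|^2\big]$, while the diffusion term supplies the nonnegative quantity $\alpha\|\nabla u^h(t)\|^2$, which we simply discard.

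The crucial step is to show that the nonlinear term on the right-hand side, now appearing in the form $-(g(u^h), \nabla u^h)$, vanishes. Reusing the antiderivative $G$ with $G'=g$ and $G(0)=0$ from \eqref{eqn6}, I would write $(g(u^h), \nabla u^h) = \int_\Omega \nabla\cdot G(u^h)\,d\Omega$ and apply the divergence theorem. The key observation, and the place where conformity is indispensable, is that $u^h(t) \in S_h \subset H_0^2(\Omega)$ has vanishing trace on $\partial\Omega$, so $G(u^h) = G(0) = 0$ there and the resulting boundary integral is zero, exactly as in \eqref{eqn7}. This is the main (and essentially the only) substantive point of the proof: the estimate hinges on the conforming nature of $S_h$, which forces the discrete solution to inherit the homogeneous boundary condition that rendered the nonlinearity energy-neutral in the continuous setting. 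For a nonconforming space this cancellation would fail, so I expect this to be the one step warranting care rather than a genuine technical obstacle.

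With the nonlinear term eliminated and $\alpha > 0$, I obtain $\frac{\mathrm{d}}{\mathrm{d}t}\big[\|u^h\|^2 + \|\Delta u^h\|^2\big] \leq 0$, and integrating from $0$ to $t$ gives $\|u^h(t)\|^2 + \|\Delta u^h(t)\|^2 \leq \|u^h(0)\|_2^2$. To control the initial data I would invoke stability of the projection used to define $u_0^h$: since $u_0^h = \Pi_h u_0$ and $A$ is both bounded and coercive by \eqref{eqn16} and \eqref{eqn17}, a one-line computation $C_2\|\Pi_h u_0\|_2^2 \leq A(\Pi_h u_0, \Pi_h u_0) = A(u_0, \Pi_h u_0) \leq C_1\|u_0\|_2\,\|\Pi_h u_0\|_2$ yields $\|u_0^h\|_2 \leq C\|u_0\|_2$. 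Combining this with the norm equivalence of \cref{lemma:2.2} produces \eqref{eq:P1-semi-stab-1}. Finally, \eqref{eq:P1-semi-stab-2} follows by applying the Sobolev embedding $H^2(\Omega) \hookrightarrow L^\infty(\Omega)$ to \eqref{eq:P1-semi-stab-1} uniformly in $t$, exactly as in the closing step of \cref{thm1}.
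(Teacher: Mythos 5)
Your proof is correct and follows essentially the same energy argument as the paper's: testing \eqref{eq77} with $\chi = u^h$, eliminating the nonlinear term via the divergence theorem exactly as in \eqref{eqn7}, discarding the nonnegative $\alpha\|\nabla u^h\|^2$ term, integrating in time, and finishing with \cref{lemma:2.2} and the Sobolev embedding $H^2(\Omega) \hookrightarrow L^\infty(\Omega)$. If anything, your explicit projection-stability step $C_2\|\Pi_h u_0\|_2^2 \leq A(\Pi_h u_0,\Pi_h u_0) = A(u_0,\Pi_h u_0) \leq C_1\|u_0\|_2\,\|\Pi_h u_0\|_2$ fills in a detail the paper's proof passes over silently, since its displayed conclusion is $\|u^h(t)\|_2 \leq C\|u^h_0\|_2$ and the bound in terms of $\|u_0\|_2$ is asserted without relating $u_0^h$ back to $u_0$.
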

\begin{proof}
Setting $\chi = u^h$ in \eqref{eq77}, we have
\begin{equation}	
\dfrac{1}{2} \dfrac{\mathrm{d}}{\mathrm{d}t}\Big[\|u^h\|^{2} + \|\Delta u^h\|^{2}\Big]  +   \alpha \|\nabla u^h\|^{2}  = \int_{\Omega} (\nabla\cdot g(u^h))u^h dx .\label{eqn5**}
\end{equation}
Using \eqref{eqn7} in \eqref{eqn5**} along with the positivity of $\alpha$ yields,
$$ \dfrac{\mathrm{d}}{\mathrm{d}t}\Big[\|u^h\|^{2} + \|\Delta u^h\|^{2}\Big]\leq 0 . $$
Integrating the above with respect to time and using \cref{lemma:2.2} yields
%We integrate the above inequality w.r.t.  time $t$ to get 
%$$\|u^h\|^{2} + \|\Delta u^h\|^{2} \leq \|u^h_{0}\|_{2}^2.$$
%Now, using \cref{lemma:2.2} in the above, we get
\begin{equation}
\|u^h(t)\|_{2}  \leq  C\|u^h_{0}\|_{2}  ,  \;\;t \in(0,T],\nonumber
\end{equation} 
which establishes \eqref{eq:P1-semi-stab-1}. Finally, we use the fact that $H^{2}(\Omega)$ is embedded in $L^{\infty}(\Omega)$ (cf. Sobolev’s Inequality, \cite{quarteroni2009numerical}) in \eqref{eq:P1-semi-stab-1} to yield \eqref{eq:P1-semi-stab-2}.
\end{proof}

\subsubsection{Semidiscrete scheme convergence}
We can now present our main error estimate for the semidiscrete form of Problem $P^1$.
\begin{theorem}\label{thmerror}
  Let $r$ be the order of accuracy of the subspace $S_h$ with $r > 3$.  Assume that $u$ and $u_{h}$ are the solutions of equations (\ref{eq4}) and (\ref{eq77}), respectively and $u \in L^{\infty}(I;H_{0}^{2}(\Omega))~ \cap~ L^{2}(I;H^{r}(\Omega)) $ with $\partial_tu \in  L^{2}(I;H^{r}(\Omega)) $ and $u_{0} \in H_{0}^{r}(\Omega)$. %where $r$ is the order of accuracy of $S_h$ with $r > 3$. 
  Then, there exists a positive constant $C$ such that
	\begin{equation}
          \|u-u^h\|_{L^{\infty}(I;H^{2}_{0}(\Omega))}^2+\|u-u^h\|_{L^{2}(I;H_{0}^{1}(\Omega))}^2\leq C(\|u_0-u_0^h\|_2^2+h^{2(r-2)}).\label{eq48}
	\end{equation}
\end{theorem}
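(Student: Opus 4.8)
The plan is to use the standard elliptic (Ritz-type) projection splitting of the error, with the nonlinear convection term as the main difficulty. I would write $u - u^h = \rho + \theta$ where $\rho \coloneqq u - \Pi_h u$ is the projection error and $\theta \coloneqq \Pi_h u - u^h \in S_h$ is the discrete error. The part $\rho$ is controlled immediately by \cref{lm1}, which gives $\|\rho(t)\|_2 \le K_1 h^{r-2}\|u(t)\|_r$ and $\|\rho_t(t)\|_2 \le K_2 h^{r-2}\|u_t(t)\|_r$; after integration in time these produce the $h^{2(r-2)}$ contribution on the right-hand side, using the assumed $L^2(I;H^r)$ regularity of $u$ and $u_t$. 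So the real work is to bound $\theta$ in the relevant norms.

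First I would derive the error equation. Since $S_h \subset H_0^2(\Omega)$, I test \eqref{eq4a} with $\chi \in S_h$, integrate the nonlinear term by parts so that $(\nabla\cdot g(u),\chi) = -(g(u),\nabla\chi)$ matches the form in \eqref{eq77}, and subtract \eqref{eq77}. Substituting $u - u^h = \rho + \theta$ and crucially invoking the $A$-orthogonality \eqref{eqn18} together with its time derivative to annihilate the biharmonic projection term $(\Delta\rho_t,\Delta\chi) = 0$, I obtain for all $\chi \in S_h$
\begin{equation*}
(\theta_t,\chi) + (\Delta\theta_t,\Delta\chi) + \alpha(\nabla\theta,\nabla\chi) = -(\rho_t,\chi) - \alpha(\nabla\rho,\nabla\chi) - (g(u)-g(u^h),\nabla\chi).
\end{equation*}
Taking $\chi = \theta$ converts the left-hand side into $\frac{1}{2}\frac{\mathrm{d}}{\mathrm{d}t}\big[\|\theta\|^2 + \|\Delta\theta\|^2\big] + \alpha\|\nabla\theta\|^2$.

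The central step is estimating the nonlinear term. Because $g$ is quadratic, componentwise $g_i(u) - g_i(u^h) = -(u-u^h)\big(1 + \frac{1}{2}(u+u^h)\big)$, so $\|g(u)-g(u^h)\| \le C\,\|1 + \frac{1}{2}(u+u^h)\|_{L^\infty}\,\|u-u^h\|$. Here I would invoke the \emph{a priori} $L^\infty$ stability bounds \eqref{eq:P1-stab-2} and \eqref{eq:P1-semi-stab-2} from \cref{thm1,lm2} to bound the prefactor by a constant independent of $h$, yielding $\|g(u)-g(u^h)\| \le C(\|\rho\| + \|\theta\|)$. Then $-(g(u)-g(u^h),\nabla\theta) \le C(\|\rho\|+\|\theta\|)\|\nabla\theta\|$, and Young's inequality absorbs the $\|\nabla\theta\|^2$ into the coercive term; the terms $(\rho_t,\theta)$ and $\alpha(\nabla\rho,\nabla\theta)$ are handled similarly by Cauchy--Schwarz and Young's inequality. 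Writing $E(t) \coloneqq \|\theta(t)\|^2 + \|\Delta\theta(t)\|^2$ and using \cref{lemma:2.2} to pass between $E(t)$ and $\|\theta\|_2^2$, this collects to
\begin{equation*}
\frac{1}{2}\frac{\mathrm{d}}{\mathrm{d}t}E(t) + \frac{\alpha}{4}\|\nabla\theta\|^2 \le C\,E(t) + C\big(\|\rho_t\|^2 + \|\rho\|_2^2\big).
\end{equation*}

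Finally I would integrate and apply Gr\"onwall. Dropping the nonnegative $\|\nabla\theta\|^2$ and bounding the $\rho$ terms by $Ch^{2(r-2)}(\|u(t)\|_r^2 + \|u_t(t)\|_r^2)$ via \cref{lm1}, Gr\"onwall's lemma gives $\sup_{t\in[0,T]}E(t) \le e^{CT}\big(E(0) + C h^{2(r-2)}\int_0^T(\|u\|_r^2 + \|u_t\|_r^2)\,\mathrm{d}t\big)$; the time integral is finite by the $L^2(I;H^r)$ hypotheses, and $E(0) = \|\Pi_h u_0 - u_0^h\|_2^2 \le C(\|u_0 - u_0^h\|_2^2 + h^{2(r-2)}\|u_0\|_r^2)$ by \cref{lm1}, the triangle inequality, and $u_0 \in H_0^r$. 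This yields the $L^\infty(I;H_0^2)$ bound on $\theta$; reinstating $\int_0^T\|\nabla\theta\|^2\,\mathrm{d}t$ in the integrated inequality and reusing the $\sup_t E(t)$ bound gives the $L^2(I;H_0^1)$ bound. A triangle inequality combining these $\theta$ bounds with the \cref{lm1} bounds on $\rho$ then yields \eqref{eq48}. The main obstacle is exactly the nonlinear term: the whole argument hinges on a Lipschitz-type estimate for $g(u)-g(u^h)$ with a constant uniform in $h$, which is precisely why the $L^\infty$ stability estimates of \cref{thm1,lm2} must be established before the convergence analysis; a secondary technical point is that bounding $\rho$ in $L^\infty(I;H_0^2)$ requires control of $\|u(t)\|_r$ uniformly in $t$, so the constant $C$ is understood to depend on the regularity norms of $u$.
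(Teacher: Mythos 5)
Your proposal is correct, and it takes a recognizably different route from the paper's, even though both are energy arguments built on the same projection $\Pi_h$, the approximation bounds of \cref{lm1}, the $L^\infty$ stability estimates of \cref{thm1,lm2} for the nonlinear term, and Gr\"onwall's lemma. The difference is in the decomposition: you derive an error equation for the discrete part $\theta = \Pi_h u - u^h$ alone and use the time-differentiated $A$-orthogonality \eqref{eqn18} to annihilate the biharmonic projection term $(\Delta\rho_t,\Delta\chi)$, so your differential inequality for $\|\theta\|^2+\|\Delta\theta\|^2$ contains only benign source terms in $\rho$, and the theorem follows by Gr\"onwall plus a final triangle inequality. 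The paper instead keeps the full error $u^h-u$ as the energy quantity, tests the error equation \eqref{eq49} with $u^h-\Pi_h u$, and therefore must carry the cross terms $(u_t^h-u_t,\,u-\Pi_h u)$ and $(\Delta(u_t^h-u_t),\,\Delta(u-\Pi_h u))$; it handles them by rewriting them as total time derivatives, integrating over $(0,t)$, and absorbing the resulting endpoint terms with Cauchy--Schwarz and Young's inequalities (see \eqref{eq50}--\eqref{eq55}), which is why its final estimate needs no closing triangle inequality but involves considerably more bookkeeping. Your version is the standard Thom\'ee/Wheeler argument: shorter, cleaner, and it exploits the orthogonality in the error equation itself, whereas the paper only uses that orthogonality inside \cref{lm1}. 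Two points you flag are worth confirming. First, your explicit Lipschitz bound $\|g(u)-g(u^h)\| \leq C(\|\rho\|+\|\theta\|)$, with a constant uniform in $h$ thanks to \eqref{eq:P1-stab-2} and \eqref{eq:P1-semi-stab-2}, is exactly what the paper's unexplained phrase ``stability estimate for $g$'' means, so you have made a step rigorous that the paper leaves implicit. Second, the pointwise-in-time control of $\|u(t)\|_r$ needed to bound $\rho$ in $L^\infty(I;H^2(\Omega))$ --- a point where the paper's own proof is equally casual in \eqref{eq57} --- is in fact available under the stated hypotheses, since $u,\,u_t \in L^2(I;H^r(\Omega))$ gives $u \in H^1(I;H^r(\Omega)) \hookrightarrow C(\overline{I};H^r(\Omega))$, so your constant's dependence on the regularity norms of $u$ is legitimate rather than an additional assumption.
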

\begin{proof}
Letting $\chi \in S_h$, we subtract \eqref{eq4} from \eqref{eq77}, to obtain
\begin{align}
&\big(u^h_t(t)-u_t(t),\chi\big)+\big(\Delta u_{t}^h(t)-\Delta u_{t}(t),\Delta \chi\big)+\alpha\big(\nabla u^h(t)-\nabla u(t), \nabla \chi\big)\nonumber\\&~~~~~~~~~~~~~~~~~~~~~~~~~~~~~~~~~~~~~~~~~~~~~~~~~~~~~~~~~~~~~ =-\big( g(u^h)-g(u), \nabla \chi\big). \label{eq49}
\end{align}
In what follows, we will suppress $t$-dependence and write $u^h = u^h(t)$ and $u = u(t)$ when no confusion arises.
%Please note that we shall denote $u^h$ for $u^h(t)$ and $u$ for $u(t),$ when no risk of confusion exists.
Setting $\chi=u^h-\Pi_{h}u$ and using $u^h-\Pi_{h}u=(u^h-u)+(u-\Pi_{h}u)$ in \eqref{eq49} yields
%  \todo{I am mildly concerned that $\chi$ must be in $S_h$ in order to use \eqref{eq77}, but writing $u^h-\Pi_{h}u=(u^h-u)+(u-\Pi_{h}u)$ uses $u \not\in S_h$ to do computations below. It's possible this is fine as long as $u$ isn't used to define $\chi$ and is instead just used to bound norms/inner products, but I did not check the details below to make sure of that.}
\begin{align}
&\frac{1}{2}\frac{\mathrm{d}}{\mathrm{d}t}\bigg[\|u^h-u\|^2+\|\Delta( u^h-u)\|^2\bigg]+\alpha\|\nabla u^h-\nabla u\|^2\leq-\big(u^h_t-u_t,u-\Pi_{h}u\big)\nonumber \\&~~~~~~~~~~~~~-\big(\Delta u_{t}^h-\Delta u_{t},\Delta (u-\Pi_{h}u)\big)-\alpha\big(\nabla u^h-\nabla u, \nabla (u-\Pi_{h}u)\big)\nonumber\\&~~~~~~~~~~~~~-\big( g(u^h)-g(u), \nabla (u^h-u)\big)-\big( g(u^h)-g(u), \nabla (u-\Pi_{h}u)\big),  \label{eq50}
\end{align}
%\vspace*{-7mm}
\begin{align}
\Rightarrow~~~&\frac{1}{2}\frac{\mathrm{d}}{\mathrm{d}t}\bigg[\|u^h-u\|^2+\|\Delta( u^h-u)\|^2\bigg]+\alpha\|\nabla u^h-\nabla u\|^2\leq-\partial_{t}\big(u^h-u,u-\Pi_{h}u\big)\nonumber\\&~~~~~~~~~~~+\big(u^h-u,\partial_t(u-\Pi_{h}u)\big)-\partial_{t}\big(\Delta u^h-\Delta u,\Delta (u-\Pi_{h}u)\big)\nonumber \\&~~~~~~~~~~~+\big(\Delta u^h-\Delta u,\Delta \partial_{t}(u-\Pi_{h}u)\big)-\alpha\big(\nabla u^h-\nabla u, \nabla (u-\Pi_{h}u)\big)\nonumber\\&~~~~~~~~~~~-\big( g(u^h)-g(u), \nabla (u^h-u)\big)-\big( g(u^h)-g(u), \nabla (u-\Pi_{h}u)\big).   \label{eq51}
\end{align}
Using the Cauchy–Schwarz inequality, Young's inequality, and the stability estimate for $g$ in the above equation, we conclude that there exist positive constants $C_1,C_2$, and $C_3$ such that
\begin{align}
&\frac{1}{2}\frac{\mathrm{d}}{\mathrm{d}t}\bigg[\|u^h-u\|^2+\|\Delta( u^h-u)\|^2\bigg]+\alpha\|\nabla u^h-\nabla u\|^2\leq -\partial_{t}\big(u^h-u,u-\Pi_{h}u\big)\nonumber\\& ~~~~~~~~~~~~~~~~~~~~~~-\partial_{t}\big(\Delta u^h-\Delta u,\Delta (u-\Pi_{h}u)\big)+C_1\|u^h-u\|_2^2\nonumber\\&~~~~~~~~~~~~~~~~~~~~~~+C_2\|u-\Pi_{h}u\|_2^2+C_3\|\partial_t(u-\Pi_{h}u)\|_2^2. \label{eq52}
\end{align}
Integrating the above inequality from $0$ to $t$ yields
\begin{align}
&\|u^h(t)-u(t)\|^2+\|\Delta (u^h(t)-u(t))\|^2+2\alpha\int_{0}^{t}\|\nabla u^h(s)-\nabla u(s)\|^2ds\leq\|u_0^h-u_0\|_2^2\nonumber\\&~~~ -2\big(u^h(t)-u(t),u(t)-\Pi_{h}u(t)\big)+2\big(u_0^h-u_0,u_0-\Pi_{h}u(0)\big)\nonumber\\&~~~-2\big(\Delta u^h(t)-\Delta u(t),\Delta (u(t)-\Pi_{h}u(t))\big)\nonumber\\&~~~+2\big(\Delta u_0^h-\Delta u_0,\Delta (u_0-\Pi_{h}u(0))\big)+C_4\int_{0}^{t}\|u^h(s)-u(s)\|_2^2ds\nonumber\\&~~~+C_5\int_{0}^{t}\|u(s)-\Pi_{h}u(s)\|_2^2ds+C_6\int_{0}^{t}\|\partial_t(u(s)-\Pi_{h}u(s))\|_2^2ds. \label{eq53}
\end{align}
Using \cref{lemma:2.2} in the above, we have
\begin{align}
&\|u^h(t)-u(t)\|_2^2+4\alpha\int_{0}^{t}\|\nabla u^h(s)-\nabla u(s)\|^2ds\leq 2\|u_0^h-u_0\|_2^2\nonumber\\&~~~ -4\big(u^h(t)-u(t),u(t)-\Pi_{h}u(t)\big)+4\big(u_0^h-u_0,u_0-\Pi_{h}u(0)\big)\nonumber\\&~~~-4\big(\Delta u^h(t)-\Delta u(t),\Delta (u(t)-\Pi_{h}u(t))\big)\nonumber\\&~~~+4\big(\Delta u_0^h-\Delta u_0,\Delta (u_0-\Pi_{h}u(0))\big)+C_7\int_{0}^{t}\|u^h(s)-u(s)\|_2^2ds\nonumber\\&~~~+C_8\int_{0}^{t}\|u(s)-\Pi_{h}u(s)\|_2^2ds+C_9\int_{0}^{t}\|\partial_t(u(s)-\Pi_{h}u(s))\|_2^2ds. \label{eq53*}
\end{align}
We now use the Cauchy–Schwarz inequality and Young's inequality to estimate several terms in \eqref{eq53*}:
  {\small
\begin{align*}
  -4\big(u^h-u,u-\Pi_{h}u\big) &\leq 4\|u^h-u\|\|u-\Pi_{h}u\|\leq\frac{1}{2}\|u^h-u\|^2+8\|u-\Pi_{h}u\|^2,\\
  4\big(u_0^h-u_0,u_0-\Pi_{h}u(0)\big) &\leq 4\|u_0^h-u_0\|\|u_0-\Pi_{h}u(0)\|
  %\nonumber\\&~~~~~~~~~~~~~~~~~~~~~~~~~~~~~~~~~~~~~~~~~~~~~~~~~~~~~~~
  \leq{2}\|u_0^h-u_0\|^2+{2}\|u_0-\Pi_{h}u(0)\|^2,\\ 
  -4\big(\Delta u^h-\Delta u,\Delta (u-\Pi_{h}u)\big) &\leq 4\|\Delta u^h-\Delta u\|\|\Delta(u-\Pi_{h}u)\|
  %\nonumber\\&~~~~~~~~~~~~~~~~~~~~~~~~~~~~~~~~~~~~~~~~~~~~~~~~
  \leq\frac{1}{2}\|\Delta u^h-\Delta u\|^2+8\|\Delta(u-\Pi_{h}u)\|^2,\\
  4\big(\Delta u_0^h-\Delta u_0,\Delta (u_0-\Pi_{h}u(0))\big) &\leq 4\|
\Delta u_0^h-\Delta u_0\|\|\Delta(u_0-\Pi_{h}u(0))\|
  %\\&~~~~~~~~~~~~~~~~~~~~~~~~~~~~~~~~~~~~~~~~~~~
  \leq{2}\|\Delta u_0^h-\Delta u_0\|^2 \\
  & \hskip 170pt +{2}\|\Delta(u_0-\Pi_{h}u(0))\|^2.%\label{eq54}
\end{align*}
  }
Using these in (\ref{eq53*}) and applying Gr{\"o}nwall's lemma yields%\vspace{-2mm}
\begin{align}
&\|u^h(t)-u(t)\|_2^2+C_{10}\int_{0}^{t}\|\nabla u^h(s)-\nabla u(s)\|^2ds\leq C_{11}\bigg[\|u_0^h-u_0\|_2^2\nonumber\\&~~~~~~~~~~~~~~~~~~~~~+\|u_0-\Pi_{h}u(0)\|^2_2+\int_{0}^{T}\|u(t)-\Pi_{h}u(t)\|_2^2dt \nonumber\\&~~~~~~~~~~~~~~~~~~~~~+\|u(t)-\Pi_{h}u(t)\|_2^2+\int_{0}^{T}\|\partial_t(u(t)-\Pi_{h}u(t))\|_2^2dt \bigg]e^{C_8T}.\label{eq55}
\end{align}
  Taking the supremum over $t \in [0, T]$, we have%\vspace{-2mm}
\begin{align}
&\sup_{0\leq t\leq T}\|u^h(t)-u(t)\|_2^2+C_{10}\int_{0}^{T}\|\nabla u^h(t)-\nabla u(t)\|^2dt\leq C_{11}\bigg[\|u_0^h-u_0\|_2^2\nonumber\\&~~~~~~~~~~~~~~~~~~~~~+\|u_0-\Pi_{h}u(0)\|^2_2+\int_{0}^{T}\|u(t)-\Pi_{h}u(t)\|_2^2dt \nonumber\\&~~~~~~~~~~~~~~~~~~~~~+\|u(t)-\Pi_{h}u(t)\|_2^2+\int_{0}^{T}\|\partial_t(u(t)-\Pi_{h}u(t))\|_2^2dt \bigg]e^{C_8T}.\label{eq56}
\end{align}
Using \cref{lm1}, we have%\vspace{-2mm}  
\begin{align}
&\|u(t)-\Pi_{h}u(t)\|_2^2~~\leq~~ C_9h^{2(r-2)}\|u(t)\|_r^2,\nonumber\\&\|u_0-\Pi_{h}u(0)\|_2^2~~\leq~~ C_9h^{2(r-2)}\|u_0\|_r^2,\nonumber\\&\int_{0}^{T}\|u(t)-\Pi_{h}u(t)\|_2^2dt~~\leq~~C_9h^{2(r-2)}\int_{0}^{T}\|u(t)\|_r^2dt,\nonumber\\&\int_{0}^{T}\|\partial_t(u(t)-\Pi_{h}u(t))\|_2^2dt~~\leq~~C_{10}h^{2(r-2)}\int_{0}^{T}\|\partial_tu(t)\|_r^2dt.\label{eq57}
\end{align}
Finally, using the theorem hypothesis along with estimates \eqref{eq57} in the expression \eqref{eq56} gives rise to %\vspace{-2mm}
\begin{align*}%\nonumber
&\sup_{0\leq t\leq T}\|u^h(t)-u(t)\|_2^2+C_{10}\int_{0}^{T}\|\nabla u^h(t)-\nabla u(t)\|^2dt\leq C\bigg(\|u_0^h-u_0\|_2^2\\
  &+h^{2(r-2)}\|u_0\|_r^2+h^{2(r-2)}\|u(t)\|_r^2 +h^{2(r-2)}\int_{0}^{T}\|u(t)\|_r^2dt+h^{2(r-2)}\int_{0}^{T}\|\partial_tu(t)\|_r^2dt\bigg),%\label{eq58}
\end{align*}
which shows,
\begin{equation*}
\sup_{0\leq t\leq T}\|u^h(t)-u(t)\|_2^2+C_{10}\int_{0}^{T}\|\nabla u^h(t)-\nabla u(t)\|^2dt\leq C(\|u_0^h-u_0\|_1^2+h^{2(r-2)}).%\label{eq59}
\end{equation*}
proving \eqref{eq48}.
%Hence, we get the final estimate \eqref{eq48}.$\;\;\;\;$
\end{proof}

\subsection{Full Discretization of Problem $P^1$}\label{ssec:P1-fully}
In this section, we provide a fully discrete approach to approximate the weak solution of Problem $P^1$.
Let $N \in \N$, 
%For any given positive integer $N$, let 
and define the temporal stepsize $k =\frac{T}{N}$.
Let $t^m=mk$ for $m = 0, 1, 2 \ldots N$. For a sequence $\{U^m\}_{m=0}^N \subset L^2(\Omega)$, we define 
\begin{eqnarray*}
	U^m=U(t^m)\;\;\; \mbox{and} \;\;\;\delta_t U^m=\frac{U^{m}-U^{m-1}}{k},\;\;\;  \forall \;\;m=1,\ldots,N.
\end{eqnarray*}
We will discretize our semidiscrete scheme \eqref{eq77} with backward Euler.
%approach for the fully discretization of equation \eqref{eq77}. 
This leads to the following fully discrete scheme: for every $m \in [N]$, find $U^m$ that solves,
%Find the approximate solution  $U^{m}$ of $u(t^{m})$ such that
\begin{eqnarray}
  \left\{
\begin{array}{ll}
(\delta_t U^m,\chi)+(\Delta \delta_t U^m,\Delta \chi)+\alpha(\nabla U^{m}, \nabla \chi) =-( g( U^{m}),\nabla  \chi),                   &  \\
U^{0}=\Pi_hu(0).                                                                                                                               &
\end{array}
\right.\label{eq85}
\end{eqnarray}
As before, we first derive a stability estimate for this fully discrete problem. %following fully discrete stability estimate:
\begin{lemma} \label{lemma11}
  If $U^{m}$ is a solution of \eqref{eq85}, then there exists a constant $C$ (that may depend on $m$) such that, %the following estimate holds:
	\begin{equation*}
	\|U^{m}\|_{2} \leq C\|U^{0}\|_{2},\;\;  m \geq 1. %\label{1}
	\end{equation*}
        Moreover, there exists a constant $C_1 > 0$ (possibly $m$-dependent) such that
	\begin{equation*}
	\|U^{m}\|_{L^{\infty}(\Omega)} \leq C_1\|U^{0}\|_{2},\;\;  m \geq 1. %\label{1}
	\end{equation*}
\end{lemma}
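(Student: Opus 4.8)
The plan is to mimic the energy arguments behind the continuous and semidiscrete stability estimates (\cref{thm1} and \cref{lm2}), replacing the time derivative by its backward-Euler analogue. First I would test the scheme \eqref{eq85} with $\chi = U^m$, which is admissible since $U^m \in S_h \subset H_0^2(\Omega)$. This produces the scalar identity
\[
(\delta_t U^m, U^m) + (\Delta \delta_t U^m, \Delta U^m) + \alpha \|\nabla U^m\|^2 = -(g(U^m), \nabla U^m).
\]
Because $U^m$ vanishes on $\partial\Omega$, the same integration-by-parts computation as in \eqref{eqn7} shows that the right-hand side equals $-\int_{\Omega} \nabla\cdot G(U^m)\,d\Omega = 0$, so the nonlinear term drops out entirely and, crucially, no Gr\"onwall argument will be needed.

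The essential discrete ingredient is the elementary inequality $(a-b,a) \geq \tfrac12(\|a\|^2 - \|b\|^2)$, valid in any inner-product space. Applying it to the $L^2$ and the $\Delta$ contributions gives
\[
(\delta_t U^m, U^m) \geq \frac{1}{2k}\big(\|U^m\|^2 - \|U^{m-1}\|^2\big),
\qquad
(\Delta \delta_t U^m, \Delta U^m) \geq \frac{1}{2k}\big(\|\Delta U^m\|^2 - \|\Delta U^{m-1}\|^2\big).
\]
Substituting these into the tested identity and discarding the nonnegative term $\alpha\|\nabla U^m\|^2$ yields the monotonicity relation
\[
\|U^m\|^2 + \|\Delta U^m\|^2 \leq \|U^{m-1}\|^2 + \|\Delta U^{m-1}\|^2.
\]

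Iterating this inequality from step $m$ down to step $0$ telescopes to $\|U^m\|^2 + \|\Delta U^m\|^2 \leq \|U^0\|^2 + \|\Delta U^0\|^2$, and \cref{lemma:2.2} converts both sides into equivalent $H^2$ norms, producing $\|U^m\|_2 \leq C\|U^0\|_2$ with $C$ in fact independent of $m$ (so the stated $m$-dependence is harmless). The $L^\infty$ claim then follows immediately from the Sobolev embedding $H^2(\Omega) \hookrightarrow L^\infty(\Omega)$, valid for $n\leq 2$, exactly as in the proofs of \eqref{eq:P1-stab-2} and \eqref{eq:P1-semi-stab-2}.

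I do not anticipate a genuine obstacle: existence of $U^m$ is handled separately in \cref{thm:P1-full-wellposed}, so here one needs only the a priori bound. The single point requiring mild care is orienting the backward-Euler identity so that it produces a telescoping, dissipative estimate rather than an accumulating one; this is standard, and the exact cancellation of the convective term means the estimate is clean and $m$-uniform rather than exponentially growing as in the semidiscrete error analysis.
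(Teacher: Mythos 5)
Your proof is correct, and its core is the same energy argument the paper uses: test \eqref{eq85} with $\chi = U^m$, cancel the nonlinear term through the divergence identity \eqref{eqn6}--\eqref{eqn7}, discard the nonnegative term $\alpha\|\nabla U^m\|^2$, and close with \cref{lemma:2.2} and the embedding $H^2(\Omega)\hookrightarrow L^\infty(\Omega)$. The one place you genuinely depart from the paper is the order of operations, and the departure buys something. The paper converts to equivalent $H^2$ norms at every time step: from the inequality $\|U^m\|^2+\|\Delta U^m\|^2 \leq (U^{m-1},U^m)+(\Delta U^{m-1},\Delta U^m)$ it deduces, via Cauchy--Schwarz, Young, and \cref{lemma:2.2}, the per-step bound $\|U^m\|_2 \leq C\|U^{m-1}\|_2$, where $C$ absorbs the two-sided equivalence constants and may exceed $1$; iterating then gives $\|U^m\|_2 \leq C^m\|U^0\|_2$, which is precisely why the lemma as stated allows the constant to depend on $m$. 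You instead keep the raw energy $\|U^m\|^2+\|\Delta U^m\|^2$ intact — your inequality $(a-b,a)\geq \tfrac{1}{2}\left(\|a\|^2-\|b\|^2\right)$ is equivalent to the paper's Cauchy--Schwarz/Young step — telescope it down to $m=0$, and only then apply \cref{lemma:2.2} once. This yields $\|U^m\|_2 \leq C\|U^0\|_2$ with $C$ independent of $m$, a strictly sharper, $m$-uniform version of the lemma that also mirrors the continuous and semidiscrete estimates \eqref{eq:P1-stab-1} and \eqref{eq:P1-semi-stab-1}. The remaining ingredients (vanishing of the convective term because $U^m \in S_h \subset H_0^2(\Omega)$, and the Sobolev embedding for $n \leq 2$) are used identically in both arguments, so there is no gap.
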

\begin{proof}
Setting $\chi$ = $U^{m}$ in (\ref{eq85}), we have
  {\small
\begin{align}
&\bigg(\frac{U^{m}-U^{m-1}}{k},U^m\bigg)+\bigg(\Delta \frac{U^{m}-U^{m-1}}{k},\Delta U^m\bigg)+\alpha(\nabla U^{m}, \nabla U^m)
  %\nonumber\\&~~~~~~~~~~~~~~~~~~~~~~~~~~~~~~~~~~~~~~~~~~~~~~~~~~~~~~~~~~~~~~ 
  =-( g( U^{m}),\nabla  U^m).\nonumber
\end{align} 		
  }
Now using the relations \eqref{eqn6} and \eqref{eqn7} yields
$$\|U^m\|^2+\|\Delta U^m\|^2\leq (U^{m-1},U^m)+(\Delta U^{m-1},\Delta U^m).$$
Using \cref{lemma:2.2} on the left-hand side above and the Cauchy-Schwarz and Young's inequalities on the right-hand side above yields
$$\|U^m\|_2\leq C\|U^{m-1}\|_2 .$$
As $m$ is arbitrary, hence we get the following inequality  
$$\|U^{m}\|_{2} \leq C\|U^{0}\|_{2},\;\;  m \geq 1.$$
We can prove the rest of the proof by using Sobolev Inequality \cite{quarteroni2009numerical}.%$\;\;\;\; \Box$ 
\end{proof}
%\todo{I want to confirm my understanding above: the previous stability estimates had constants that perhaps depended on $T$. The above estimate I think has constants that depend on $m$. Hence, as $N \uparrow \infty$, do these constants (generally) blow up for this fully discrete estimate? Unless maybe $C < 1$ in the penultimate inequality?}
%\todo{\textcolor{blue}{In previous stability estimates, there was no dependence of constant $C$ on time $T$ because constant $C$ is coming due to lemma \cref{lemma:2.2}.  In this case, using \cref{lemma:2.2} on the left-hand side above and the Cauchy-Schwarz and Young's inequalities on the right-hand side, we can choose a constant $C < 1$.}}

Our next step is to the establish existence and uniqueness for the fully discrete scheme \eqref{eq85}. We require a particular fixed point theorem to accomplish this.
%Now, let us derive the existence and uniqueness of solution $U^m$ for the fully discrete scheme (\ref{eq85}). We shall employ the following theorem to end the task.

\begin{theorem}[Brouwer fixed point theorem, \cite{browder1965existence}]
%\noindent\textbf{Brouwer fixed point theorem} \cite{browder1965existence}: 
  Let $H$ be a finite-dimensional Hilbert space having an inner product $(\cdot, \cdot)_H$ and induced norm $\|\cdot\|_H$. Also, assume that $ \Lambda:H\to H $ is a continuous operator satisfying $( \Lambda(\psi), \psi)_H>0$ for all $\psi\in H $ satisfying $\|\psi\|_H = a_{0}>0$. Then, there exists a $\psi^{\prime}\in H$ such that $\|\psi'\|_H < a_{0}$ and $\Lambda(\psi') = 0$.
\end{theorem}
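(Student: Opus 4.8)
The plan is to deduce this statement from the classical Brouwer fixed point theorem for continuous self-maps of a compact convex set, via a standard contradiction argument. Write $\overline{B} = \{\psi \in H : \|\psi\|_H \leq a_0\}$ for the closed ball of radius $a_0$; since $H$ is finite-dimensional, $\overline{B}$ is compact and convex. First I would record a consequence of the hypothesis itself: if $\|\psi\|_H = a_0$ then $(\Lambda(\psi),\psi)_H > 0$, which in particular forces $\Lambda(\psi) \neq 0$ on the sphere. Hence any zero of $\Lambda$ lying in $\overline{B}$ must in fact lie in the open ball, and it therefore suffices to produce a zero of $\Lambda$ somewhere in $\overline{B}$.

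Suppose, for contradiction, that $\Lambda(\psi) \neq 0$ for every $\psi \in \overline{B}$. Then the map
\begin{equation*}
g(\psi) := -a_0 \frac{\Lambda(\psi)}{\|\Lambda(\psi)\|_H}
\end{equation*}
is well-defined and continuous on $\overline{B}$ (continuity of $\Lambda$ together with nonvanishing of the denominator), and satisfies $\|g(\psi)\|_H = a_0$ for all $\psi \in \overline{B}$, so $g$ maps $\overline{B}$ continuously into itself. By the classical Brouwer fixed point theorem, $g$ has a fixed point $\psi^\ast \in \overline{B}$. Because $\|g(\psi)\|_H = a_0$ identically, this fixed point necessarily satisfies $\|\psi^\ast\|_H = a_0$, i.e. $\psi^\ast$ lies on the sphere.

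The contradiction then comes from evaluating the pairing at $\psi^\ast$. Using $\psi^\ast = g(\psi^\ast) = -a_0 \Lambda(\psi^\ast)/\|\Lambda(\psi^\ast)\|_H$ and bilinearity,
\begin{equation*}
(\Lambda(\psi^\ast), \psi^\ast)_H = -a_0 \frac{(\Lambda(\psi^\ast), \Lambda(\psi^\ast))_H}{\|\Lambda(\psi^\ast)\|_H} = -a_0 \|\Lambda(\psi^\ast)\|_H < 0,
\end{equation*}
which directly contradicts the hypothesis $(\Lambda(\psi),\psi)_H > 0$ on $\|\psi\|_H = a_0$. Hence the assumption fails, so $\Lambda$ vanishes at some $\psi' \in \overline{B}$; by the first paragraph this zero must satisfy $\|\psi'\|_H < a_0$, which is the desired conclusion.

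The only genuine external input is the classical Brouwer fixed point theorem, which I would invoke as given (the statement to be proved is essentially its standard existence corollary). Consequently there is no substantial obstacle beyond careful bookkeeping, and the two points that need attention are precisely these: verifying that $g$ is well-defined under the contradiction hypothesis (nonvanishing denominator), and correctly locating the fixed point on the sphere rather than in the interior, since it is the sphere case that produces the sign contradiction.
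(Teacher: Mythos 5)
Your proposal is correct, and it is worth noting that the paper itself offers no proof of this statement: it is quoted as a known result and attributed to the literature (\cite{browder1965existence}), then used as a black box to establish well-posedness of the fully discrete schemes in \cref{thm:P1-full-wellposed} and \cref{thm:P2-full-wellposed}. Your argument is the standard derivation of this corollary from the classical Brouwer fixed point theorem, and every step checks out: under the contradiction hypothesis the normalized map $g(\psi) = -a_0\,\Lambda(\psi)/\|\Lambda(\psi)\|_H$ is continuous on the closed ball (which is compact and convex since $H$ is finite-dimensional, hence linearly isometric to a Euclidean space where classical Brouwer applies), $g$ maps the ball into the sphere, so its fixed point $\psi^\ast$ must lie on the sphere, and the evaluation $(\Lambda(\psi^\ast),\psi^\ast)_H = -a_0\|\Lambda(\psi^\ast)\|_H < 0$ contradicts the sign hypothesis. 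Your preliminary observation that $\Lambda$ cannot vanish on the sphere is exactly what upgrades the conclusion from $\|\psi'\|_H \leq a_0$ (the form found in many textbook statements, which assume only $(\Lambda(\psi),\psi)_H \geq 0$) to the strict inequality $\|\psi'\|_H < a_0$ claimed here, so the proposal matches the precise statement as given. In short: the proof is complete and correct; it fills in a citation rather than competing with an argument in the paper.
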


We use this fixed point theorem to prove the well-posedness of the fully discrete scheme.
\begin{theorem}\label{thm:P1-full-wellposed}
	Suppose $U^{0},U^{1},\ldots,U^{m-1}$ is given  for each  $m>1$. Then, $U^{m}$ exists  and it satisfies (\ref{eq85}) uniquely.
\end{theorem}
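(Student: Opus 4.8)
The plan is to obtain existence from the Brouwer fixed point theorem stated above, applied on the finite-dimensional Hilbert space $H = S_h$ equipped with the inner product $(\psi,\chi)_H := (\psi,\chi) + (\Delta\psi,\Delta\chi)$, which by \cref{lemma:2.2} is equivalent to the $\|\cdot\|_2$ inner product and is genuinely positive definite on $S_h \subset H_0^2(\Omega)$. Treating $U^0,\ldots,U^{m-1}$ as fixed data, I would recast the scheme \eqref{eq85} as a root-finding problem: for each $\psi \in S_h$, the map $\chi \mapsto \tfrac1k(\psi - U^{m-1},\chi) + \tfrac1k(\Delta(\psi-U^{m-1}),\Delta\chi) + \alpha(\nabla\psi,\nabla\chi) + (g(\psi),\nabla\chi)$ is a bounded linear functional on $S_h$, so the Riesz representation theorem produces a unique $\Lambda(\psi) \in S_h$ representing it. By construction $U^m$ solves \eqref{eq85} if and only if $\Lambda(U^m) = 0$. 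Continuity of $\Lambda$ is immediate because $g$ is polynomial in $\psi$ and all norms on the finite-dimensional space $S_h$ are equivalent.

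The heart of the existence argument is verifying the positivity condition $(\Lambda(\psi),\psi)_H > 0$ on a suitable sphere. Expanding,
\begin{equation*}
(\Lambda(\psi),\psi)_H = \tfrac1k\|\psi\|_H^2 + \alpha\|\nabla\psi\|^2 + (g(\psi),\nabla\psi) - \tfrac1k(U^{m-1},\psi)_H.
\end{equation*}
The crucial simplification is that the nonlinear term vanishes: since $\psi \in H_0^2(\Omega) \subset H_0^1(\Omega)$, the identity \eqref{eqn7} gives $(g(\psi),\nabla\psi) = 0$. Discarding the nonnegative term $\alpha\|\nabla\psi\|^2$ and applying Cauchy--Schwarz to the cross term yields $(\Lambda(\psi),\psi)_H \geq \tfrac1k\|\psi\|_H\big(\|\psi\|_H - \|U^{m-1}\|_H\big)$. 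Choosing $a_0 := \|U^{m-1}\|_H + 1$ makes this strictly positive whenever $\|\psi\|_H = a_0$, so the Brouwer theorem furnishes $\psi'$ with $\Lambda(\psi') = 0$; setting $U^m = \psi'$ gives existence.

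For uniqueness I would take two solutions $U^m_1, U^m_2$, set $W = U^m_1 - U^m_2$, subtract the two instances of \eqref{eq85}, and test with $\chi = W$, obtaining $\tfrac1k(\|W\|^2 + \|\Delta W\|^2) + \alpha\|\nabla W\|^2 = -(g(U^m_1) - g(U^m_2), \nabla W)$. Writing $g(U_1^m) - g(U_2^m) = -\big(1 + \tfrac12(U_1^m + U_2^m)\big)W$, the purely linear contribution integrates to zero by \eqref{eqn7} (again using $W|_{\partial\Omega} = 0$), leaving only the quadratic part, which I would bound using the $L^\infty$ stability estimate of \cref{lemma11} for $U^m_1$ and $U^m_2$ together with the Cauchy--Schwarz and Young inequalities. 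Combined with \cref{lemma:2.2}, this yields $\|W\|_2^2 \leq 0$, hence $W = 0$.

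The main obstacle is the nonlinear term $g$. For existence it is benign, disappearing upon testing against $\psi$ itself via \eqref{eqn7}; the genuinely delicate point is the uniqueness estimate, where the quadratic nonlinearity must be absorbed into the mass and dissipation terms. The cleanest route exploits that the linear part of $g(U_1^m)-g(U_2^m)$ drops out by \eqref{eqn7} and controls the remaining $L^\infty$-weighted term through the uniform bounds of \cref{lemma11}; I expect this absorption to be the only subtle step, and note that it may implicitly demand that the time step $k$ be sufficiently small relative to the stability constant.
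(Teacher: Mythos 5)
Your proposal is correct and follows essentially the same route as the paper: existence via the Brouwer fixed point theorem applied to an operator encoding the scheme, with the nonlinear term annihilated by \eqref{eqn7} and the sphere radius set by $\|U^{m-1}\|$, and uniqueness via an energy argument that controls $g(U_1^m)-g(U_2^m)$ through the $L^\infty$ bound of \cref{lemma11} and requires $k$ sufficiently small --- the same restriction ($1-C_2k>0$) that appears in the paper's proof. The only cosmetic differences are that you define $\Lambda$ by Riesz representation in the $(\psi,\chi)+(\Delta \psi,\Delta \chi)$ inner product and scale by $1/k$ instead of $k$, which slightly streamlines the positivity estimate, and that you split the $g$-difference into a vanishing linear part plus a quadratic part rather than invoking a single Lipschitz bound.
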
	
\begin{proof}
Using the given hypothesis, the initial term $U^{0}$ exists vacuously. Let us assume that $U^{0},U^{1},\ldots,U^{m-1}$ exist. Now using mathematical induction, we assert that $U^{m}$ also exists. To do the task, let us consider an operator  $\Lambda:S_h\to S_h $  defined by 
\begin{align}
&(\Lambda(\psi), \chi )= ( \psi ,\chi) +(\Delta \psi,\Delta\chi) - ( U^{m-1} ,\chi) - (\Delta U^{m-1},\Delta\chi) +{{k}}\bigg(( g(\psi) ,\nabla\chi)\nonumber\\&~~~~~~~~~~~~~~~~~~~~~~~~~~~~~~~~~~~~~~~~~~~~~~~~~~~~~~~~+\alpha(\nabla \psi,\nabla\chi)\bigg),\;\;\;\; \forall \;\psi,\chi \in S_{h}.\nonumber
\end{align}
One can easily see that $\Lambda$ is a continuous map. 

\noindent Putting $\chi = \psi$ with  relation (\ref{eqn7}) in the above equation to obtain
\begin{eqnarray}
(\Lambda(\psi), \psi)= \|\psi\|^{2}  + \|\Delta \psi\|^{2} - ( U^{m-1} ,\psi) - (\Delta U^{m-1},\Delta \psi)+ k  \alpha\|\nabla \psi\|^{2} ,\nonumber
\end{eqnarray}
which implies
\begin{eqnarray}
(\Lambda(\psi), \psi)&\geq&\|\psi\|^{2}  + \|\Delta \psi\|^{2} - ( U^{m-1} ,\psi) - (\Delta U^{m-1},\Delta \psi)\nonumber\\
&\geq& C_1(\|\psi\|^{2}_{2} - \|U^{m-1}\|^2- \|\Delta U^{m-1}\|^2).\nonumber
\end{eqnarray}	
For $\|\psi\|_{2}^2 = \|U^{m-1}\|^2 +\|\Delta U^{m-1}\|^2+ C_2$ (choosing a suitable positive constant $C_2$ that satisfies the condition of the fixed point theorem), one can easily see that $(\Lambda(\psi), \psi) > 0 $. Consequently, Brouwer fixed point theorem guarantees the existence of $\psi^{\prime} \in S_{h}$ with $\Lambda(\psi^{\prime}) = 0$ such that  $\|\psi^{\prime}\|_2 \leq \|U^{m-1}\|^2 +\|\Delta U^{m-1}\|^2+ C_2$. Now, selecting  $U^{m} = \psi^{\prime}$ with $\Lambda(\psi^{\prime}) = 0$ satisfies equation (\ref{eq85}) and thus $U^{m}$ exists.

To prove the uniqueness part, we assume that $U^{m}_{1}$ and $U^{m}_{2}$ are the two distinct solutions of (\ref{eq85}). Choosing $W^{m}:= U^{m}_{1} - U^{m}_{2}$, we get
\begin{align}
(\delta_{t}W^{m},\chi) +(\Delta \delta_{t}W^{m},\Delta\chi)+\alpha(\nabla W^{m},\nabla \chi) = -(g(U_{1}^{m})-g(U_{2}^{m}),\nabla \chi). \label{eqn27*}
\end{align}

Now, we again use the induction method. Let us first assume that $W^{m-1} = 0$ and we will prove that $W^{m} = 0.$ For that, setting $\chi =  W^{m}$ in (\ref{eqn27*}) yields
\begin{eqnarray}
\frac{1}{2} \delta_{t} \Big(\|W^m\|^{2} + \|\Delta W^m\|^{2}\Big) 
\leq\|g(U_{1}^{m})-g(U_{2}^{m})\| \|\nabla W^{m}\|.  \label{eqn28*}
\end{eqnarray}	
Further, applying stability estimate for function $g$, %together with the boundedness of $\|U^{m}_{1}\|_{L^{\infty}(\Omega)}$ and
%$ \|U^{m}_{2}\|_{L^{\infty}(\Omega)}$
we obtain
\begin{equation}
\|g(U_{1}^{m})-g(U_{2}^{m})\| \leq C_{1} \| W^{m}\|.\label{eqn29*}
\end{equation}
From equations (\ref{eqn28*}) and (\ref{eqn29*}), and \cref{lemma:2.2}, we get 
$$\delta_{t} \|W^m\|_{2}^{2}\leq C_{2}\big( \|W^{m}\|_{2}^{2}\big).$$
Further, utilizing the definition $\delta_{t} \|W^m\|_{1}^{2}$, we get$$ \|W^m\|_{2}^{2}\leq \frac{1}{1-C_2k}\big( \|W^{m-1}\|_{2}^{2}\big).$$
Now, selecting sufficiently small k such that $1-C_2k>0$ and using the  condition $W^{m-1} = 0$, we get $\|W^m\|_{2} = 0.$ Consequently, $W^m$ = 0 and hence we obtained the uniqueness.
\end{proof}

Finally, we establish an error estimate for this fully discrete scheme. 
We will state convergence in terms of norms on time-discrete versions of B{\^o}chner space norms. In particular, if $\{f_m\}_{m=0}^N \subset \mathcal{Z}$ for some Banach space $\mathcal{Z}$, then the following is a discrete $L^p$ norm:
\begin{align*}
  \|f\|^p_{L^p_{dis}(I; \mathcal{Z})} \coloneqq \sum_{m=0}^N  \|f^m\|^p_{\mathcal{Z}},
\end{align*}
for $1 \leq p < \infty$, and for $p = \infty$ the norm is the maximum over $m$ of $\|f^m\|_{\mathcal{Z}}$. We can now state our convergence theorem.
%  \todo{In addition, we shall denote discrete B\^{o}chner spaces in time by $L^p_{dis}(I;{\cal Z})$.
%  Here, we shall take the sum over each time step instead of integration.}

%Now, let us derive the following error estimate:
\begin{theorem}\label{thmBE}
	Assume that $u$ and $U^m$ are the solutions of equation (\ref{eq4}) and (\ref{eq85}), respectively and $u \in L^{\infty}(I;H_{0}^{2}(\Omega)) \cap L^{2}(I;H^{r}(\Omega)) $ with $u_{0} \in H_{0}^{r}(\Omega)$, $u_t \in  L^{1}(I;H^{r}(\Omega)) $ and $u_{tt} \in  L^{1}(I;H^{2}(\Omega)) $, where $r$ is the order of accuracy of $S_h$ with $r > 3$. Then, there exists a positive constant $C$ such that the following estimate in discrete $L^{2}(I;H_{0}^{2}(\Omega))$ norm holds:
	\begin{equation}
	\|u-U\|_{L^{2}_{dis}(I;H_{0}^{2}(\Omega))}\leq C(u,T)(h^{r-2}+k),\label{eq4*}
	\end{equation}
	where $$C(u,T)= \bigg[{K_1}\bigg(\sum_{m=1}^{N}\|u(t^m)\|_r+\int_{0}^{T}\|{u_t}(\cdot,s)\|_rds+\int_{0}^{T}\|{u_{tt}}(\cdot,s)\|_2ds\bigg)\bigg].$$
	Moreover, there exists another positive constant $K_2$ such that 
	\begin{align}
	\|u-U\|_{L^{\infty}_{dis}(I;H_{0}^{2}(\Omega))}\leq K_2(h^{r-2}+k). \label{Be*}
	\end{align}
\end{theorem}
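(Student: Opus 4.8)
The plan is to follow the standard error-splitting strategy for a backward-Euler fully discrete scheme, decomposing the error $u(t^m) - U^m$ through the elliptic projection $\Pi_h$. First I would write $u(t^m) - U^m = \big(u(t^m) - \Pi_h u(t^m)\big) + \big(\Pi_h u(t^m) - U^m\big) =: \rho^m + \theta^m$. The projection error $\rho^m$ is controlled immediately by \cref{lm1}, which gives $\|\rho^m\|_2 \leq K_1 h^{r-2}\|u(t^m)\|_r$, and this is the source of the $h^{r-2}$ contribution. The bulk of the work is estimating the discrete quantity $\theta^m \in S_h$.

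To derive an equation for $\theta^m$, I would subtract the fully discrete scheme \eqref{eq85} from the continuous weak form \eqref{eq4} evaluated at $t = t^m$, testing against $\chi \in S_h$. The key manipulation is to express the exact time derivative $u_t(t^m)$ in terms of the backward difference: write $u_t(t^m) = \delta_t u(t^m) + \tau^m$, where $\tau^m = u_t(t^m) - \delta_t u(t^m)$ is the temporal truncation error. A Taylor expansion with integral remainder gives $\|\tau^m\|_{\mathcal{Z}} \leq \frac{1}{k}\int_{t^{m-1}}^{t^m}(s - t^{m-1})\|u_{tt}(s)\|_{\mathcal{Z}}\,ds$, which upon summation produces the $k$ term and explains why the hypotheses require $u_{tt} \in L^1(I;H^2(\Omega))$ and $u_t \in L^1(I;H^r(\Omega))$. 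Using the orthogonality property \eqref{eqn18} of $\Pi_h$ to eliminate the biharmonic projection terms, I would obtain an equation of the form
\begin{align*}
(\delta_t \theta^m, \chi) + (\Delta \delta_t \theta^m, \Delta \chi) + \alpha(\nabla \theta^m, \nabla \chi) = -(\delta_t \rho^m, \chi) - (\tau^m, \chi) - \alpha(\nabla \rho^m, \nabla \chi) - (g(u^m) - g(U^m), \nabla \chi).
\end{align*}

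Next I would set $\chi = \theta^m$ and estimate each right-hand term. The left side yields $\delta_t\big(\|\theta^m\|^2 + \|\Delta \theta^m\|^2\big)$ after using the algebraic identity $(a-b, a) \geq \frac{1}{2}(\|a\|^2 - \|b\|^2)$ on the difference quotients, plus the nonnegative coercive term $\alpha\|\nabla\theta^m\|^2$. For the nonlinear term, I would invoke the stability estimate for $g$ (the Lipschitz-type bound used in \cref{thm:P1-full-wellposed}) together with the uniform $L^\infty$ bounds from \cref{lm2} and \cref{lemma11} to control $\|g(u^m) - g(U^m)\|$ by $C\|u^m - U^m\| \leq C(\|\theta^m\| + \|\rho^m\|)$. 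After applying Cauchy--Schwarz, Young's inequality, and \cref{lemma:2.2} to pass to the full $\|\cdot\|_2$ norm, I would arrive at a discrete Gr\"onwall inequality of the form $\|\theta^m\|_2^2 \leq \|\theta^{m-1}\|_2^2 + C k\,\|\theta^m\|_2^2 + C k\,(\text{data terms})$. Summing over $m$ and applying the discrete Gr\"onwall lemma (valid once $k$ is small enough that $1 - Ck > 0$) bounds $\|\theta^m\|_2$ in terms of $\|\theta^0\|_2 = 0$ and the accumulated projection and truncation errors.

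The main obstacle I anticipate is the careful bookkeeping of the $\delta_t \rho^m$ term, since handling the commutation of the time difference with the spatial projection requires expressing $\delta_t \rho^m$ as $\frac{1}{k}\int_{t^{m-1}}^{t^m}(\partial_t u - \Pi_h \partial_t u)\,ds$ and bounding it via the time-derivative projection estimate \eqref{eqn25*}; summing these contributions against the factor $k$ in the Gr\"onwall step is what converts the per-step $h^{r-2}$ bounds into the stated $C(u,T)$ constant involving $\int_0^T \|u_t\|_r\,ds$. Once the discrete $L^2$-in-time bound \eqref{eq4*} is in hand, the $L^\infty$-in-time estimate \eqref{Be*} follows by taking the maximum over $m$ in the summed Gr\"onwall inequality rather than the $\ell^2$ sum, combined once more with the triangle inequality $\|u^m - U^m\|_2 \leq \|\rho^m\|_2 + \|\theta^m\|_2$.
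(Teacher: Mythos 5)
Your proposal is correct and follows essentially the same route as the paper's proof: the same splitting $u(t^m)-U^m=\rho^m+\theta^m$ through the projection $\Pi_h$, the same decomposition of the consistency error into a temporal truncation part (your $\tau^m$, the paper's $\sigma_1^m$) and a projection-commutation part (your $\delta_t\rho^m$, the paper's $\sigma_2^m$), the same Taylor-with-integral-remainder and Lipschitz-of-$g$ estimates, and the same small-$k$ recursion/Gr\"onwall summation followed by the triangle inequality and the passage to the discrete B\^{o}chner norms. One slip to fix: the orthogonality \eqref{eqn18} eliminates only $(\Delta\,\delta_t\rho^m,\Delta\chi)$, not $(\Delta\tau^m,\Delta\chi)$, so your displayed error equation must retain the term $-(\Delta\tau^m,\Delta\chi)$; this costs nothing downstream, since Cauchy--Schwarz together with your $H^2$-norm Taylor bound (precisely why $u_{tt}\in L^{1}(I;H^{2}(\Omega))$ is assumed) absorbs it, whereas the paper simply keeps the whole $(\Delta\sigma^m,\Delta\chi)$ term and never invokes \eqref{eqn18} in this proof. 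Also note that the $L^\infty$ bounds needed for the Lipschitz constant of $g$ are those for the exact solution $u$ and for $U^m$, i.e.\ \cref{thm1} and \cref{lemma11}, rather than the semidiscrete estimate \cref{lm2}.
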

\begin{proof}
We write the error in the following form $$u(t^m)-U^m= (u(t^m)-\Pi_hu(t^m))+(\Pi_hu(t^m)-U^m)=\rho^m+\theta^m.$$
From lemma \ref{lm1}, we already know $\rho^m$. So, we only need to find the bound of $\theta^m$.
To do the task, we subtract equation \eqref{eq85} from \eqref{eq4} to get 
\begin{align}
&\int_{\Omega}\bigg(\dot{u}(t^{m})-\big(\frac{U^m-U^{m-1}}{k}\big)\bigg)\chi d\Omega + \int_{\Omega}\Delta\bigg(\dot{u}(t^{m})-\big(\frac{U^m-U^{m-1}}{k}\big)\bigg)\Delta\chi d\Omega\nonumber\\&~~~~~~~~~~~~~~~~+\alpha\int_{\Omega}\nabla\big(u(t^m)-U^m\big)\nabla\chi d\Omega=\int_{\Omega}\big(g( U^{m})-g(u({t^m}))\big)\nabla\chi d\Omega.\nonumber
\end{align}
Solving the above, we have 
\begin{align}
&\int_{\Omega}\bigg(\frac{\theta^m-\theta^{m-1}}{k}\bigg)\chi d\Omega + \int_{\Omega}\Delta\bigg(\frac{\theta^m-\theta^{m-1}}{k}\bigg)\Delta\chi d\Omega + \alpha \int_{\Omega}\nabla \theta^m\nabla \chi d \Omega \nonumber \\&= -\int_{\Omega}(\sigma^m)\chi d\Omega 
- \int_{\Omega}\Delta(\sigma^m)\Delta\chi d\Omega -\alpha\int_{\Omega}\nabla(\rho^m )\nabla \chi d\Omega\nonumber\\&~~~~~~~~~~~~~~~~~~~~~~~~~~~~~~~~~~~~~~~~~~~ + \int_{\Omega}\big(g( U^{m})-g(u({t^m}))\big)\nabla\chi d\Omega. \label{eq86}
\end{align}
Where, $\sigma^m = \dot{u}(t^{m})-\frac{\Pi_hu(t^m)-\Pi_hu(t^{m-1})}{k}$.

\noindent Now, using stability estimate for function $g$, we have
$$\|g( U^{m})-g(u({t^m}))\|\leq C_1\|U^{m}-u({t^m})\|\leq C_1\big(\|\rho^m\|+\|\theta^m\|\big).$$
Using \cref{lemma:2.2}, Cauchy–Schwarz inequality and the above relation in equation \eqref{eq86} with $\chi=\theta^m$, we get
\begin{align}
&(1-C_1k)\|\theta^m\|_2\leq C_2(\|\theta^{m-1}\|_2+k\|\sigma^m\|_2+k\|\rho^m\|_2 ).\label{eq87}
\end{align}
Further, we add and subtract $\frac{u(t^m)-u(t^{m-1})}{k}$ in $\sigma^m$ to obtain
\begin{align}
\sigma^m = &\bigg(\dot{u}(t^{m})-\frac{u(t^m)-u(t^{m-1})}{k}\bigg)\nonumber\\&+\bigg(\frac{u(t^m)-\Pi_hu(t^m)}{k}-\frac{u(t^{m-1})-\Pi_hu(t^{m-1})}{k}\bigg)\nonumber\\& =\sigma^m_1+\sigma^m_2. \label{eq88}
\end{align}
Selecting $k_1$ with $0\leq k\leq k_1$ such that $1-C_1k\geq0$ and using the above relation while taking the sum from $1$ to $m$ in equation \eqref{eq87}, we get  
\begin{align}
&\|\theta^m\|_2\leq C_3\bigg(\|\theta^{0}\|_2+k\sum_{i=1}^{m}\|\sigma^i_1\|_2+k\sum_{i=1}^{m}\|\sigma^i_2\|_2+k\sum_{i=1}^{m}\|\rho^i\|_2 \bigg).\label{eq89}
\end{align}
$U^{0}=\Pi_hu(0)$ implies $\theta^{0}=0$. Now, using Taylor’s theorem in the integral form \cite{thomee2007galerkin} up to three terms, it follows that
$$\dot{u}(t^{m})-\frac{u(t^m)-u(t^{m-1})}{k}=-\frac{1}{k}\int_{t^{m-1}}^{t^{m}}(t^{m-1}-s)\ddot{u}(\cdot,s)ds.$$
This gives rise to
\begin{align}
&k\sum_{i=1}^{m}\|\sigma^i_1\|_2\leq\sum_{i=1}^{m}\| \int_{t^{m-1}}^{t^{m}}(t^{m-1}-s)\ddot{u}(\cdot,s)ds\|_2\leq k\int_{0}^{t^m}\|\ddot{u}(\cdot,s)\|_2ds.\label{eq90}
\end{align}
Also, we can see that 
$$\frac{u(t^m)-\Pi_hu(t^m)}{k}-\frac{u(t^{m-1})-\Pi_hu(t^{m-1})}{k}=\frac{1}{k}\int_{t^{m-1}}^{t^m}(I-\Pi_h)
\dot{u}(\cdot,s)ds.$$ 
Using Lemma  \ref{lm1} in above, we have
\begin{align}
&k\sum_{i=1}^{m}\|\sigma^i_2\|_2\leq\sum_{i=1}^{m} \int_{t^{m-1}}^{t^{m}}C_4h^{r-2}\|\dot{u}(\cdot,s)\|_rds\leq C_4h^{r-2}\int_{0}^{t^m}\|\dot{u}(\cdot,s)\|_rds.\label{eq91}
\end{align}
Again, using Lemma  \ref{lm1}, equations \eqref{eq90} and \eqref{eq91} in \eqref{eq89} yields
\begin{align}
\|\theta^m\|_2\leq \tilde{C}(u,T)(h^{r-2}+k),\label{eq92}
\end{align}
where $$\tilde{C}(u,T)= \bigg[C_5\bigg(\sum_{m=1}^{N}\|u(t^m)\|_r+\int_{0}^{T}\|\dot{u}(\cdot,s)\|_rds+\int_{0}^{T}\|\ddot{u}(\cdot,s)\|_2ds\bigg)\bigg].$$

\noindent Now, using the estimate of Lemma \ref{lm1} and equation \eqref{eq92} with triangle inequality yields
\begin{align}
\|u(t^m)-U^m\|_2\leq C(u,T)(h^{r-2}+k),\;\; m\in [N]. \label{Be}
\end{align}
Taking the sum over each time step $t^m$, we get an error in discrete $L^{2}(I;H_{0}^{2}(\Omega))$ norm as follows
\begin{align}
\|u-U\|_{L_{dis}^{2}(I;H_{0}^{2}(\Omega))}\leq C(u,T)(h^{r-2}+k).
\end{align}
Here, we have used the fact that $(a^2+b^2)^{1/2}\leq a+b$ when $a,b>0$. Again, we can take the supremum in equation \eqref{Be} to prove the second part of the theorem.
\end{proof}

\section{Analysis of Problem $P^2$ } \label{Sec P2}
In this section we present similar theoretical and algorithm results as for Problem $P^1$ above. Since the approach is materially identical, we leave details in the Supplementary Material, and here only state the major results. In particular: We give an existence and uniqueness result for \eqref{eq:P2} in \cref{exist1}, and provide the proof in \cref{existmixed}. We follow this with a convergence estimate for a semi-discrete scheme in \eqref{thmerror.} whose proof is contained in \cref{semimix}.

Finally, we discretize the time variable and provide well-posedness statements and convergence rates for a fully discrete scheme in \cref{fully-mix}.

Our first result is a well-posedness result for the mixed formulation \eqref{eq:P2} that parallels \cref{thm:P1-wellposed} for the primal formulation.
\begin{theorem}[Problem $P^2$ existence and uniqueness]\label{exist1} Assume that $(u_{0},p_{0}) \in H_0^1(\Omega)\times H_0^1(\Omega) $. Then, for $T>0$, there exists a unique weak solution $(u,p) $ to the problem (\ref{eq1*}) in the sense of the weak formulation \eqref{eq5}.
\end{theorem}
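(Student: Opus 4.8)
The plan is to follow the Faedo--Galerkin strategy used in the proof of \cref{thm:P1-wellposed}, carried out now for the pair $(u,p)$ and in the weaker spaces dictated by the mixed formulation \eqref{eq:P2}. First I would fix an orthonormal basis $\{\phi_j\}_{j\ge 1}$ of $H_0^1(\Omega)$ and seek $u^m(t)=\sum_{j=1}^m d_m^j(t)\phi_j$ and $p^m(t)=\sum_{j=1}^m e_m^j(t)\phi_j$ solving the finite-dimensional restrictions of \eqref{eq5a}--\eqref{eq5b}. The discrete version of \eqref{eq5b} expresses the coefficients $e_m^j$ as a (time-independent) linear function of the $d_m^j$, so that $p^m$ and $p^m_t$ are slaved to $u^m$ and $u^m_t$; substituting into the discrete \eqref{eq5a} produces a first-order ODE system in the $d_m^j$ whose leading matrix is symmetric positive definite, and the Picard theorem gives a local-in-time solution that the a priori bounds below extend to $[0,T]$.

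The basic energy estimate is obtained by testing \eqref{eq5a} with $\chi=u^m$ and using \eqref{eq5b} twice: with $\chi'=u^m$ to rewrite $\alpha(p^m,u^m)=\alpha\|\nabla u^m\|^2$, and the time-differentiated \eqref{eq5b} with $\chi'=p^m$ to rewrite $(\nabla p^m_t,\nabla u^m)=\tfrac12\tfrac{d}{dt}\|p^m\|^2$. The nonlinear term vanishes exactly as in \eqref{eqn7}, giving $\tfrac{d}{dt}\big(\|u^m\|^2+\|p^m\|^2\big)+2\alpha\|\nabla u^m\|^2=0$. Integration yields $m$-uniform bounds on $u^m$ in $L^\infty(I;L^2)\cap L^2(I;H_0^1)$ and on $p^m$ in $L^\infty(I;L^2)$, with right-hand side controlled by $\|u_0\|+\|p_0\|$.

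The delicate step is control of the time derivatives required by the solution class, namely $u^m_t\in L^2(I;L^2)$ and $p^m_t\in L^2(I;H^1)$. I would test \eqref{eq5a} with $\chi=u^m_t$ and separately with $\chi=p^m_t$; the differentiated \eqref{eq5b} (with $\chi'=p^m_t$ and $\chi'=u^m_t$) converts the mixed terms into $\|p^m_t\|^2$, $\tfrac\alpha2\tfrac{d}{dt}\|\nabla u^m\|^2$, and $\|\nabla u^m_t\|^2=(p^m_t,u^m_t)$. The main obstacle is the forcing: the terms $(g(u^m),\nabla u^m_t)$ and $\|g(u^m)\|$ cannot be absorbed through an $L^\infty(\Omega)$ bound on $u^m$, since---unlike the primal case where $H^2\hookrightarrow L^\infty$ underlies \cref{thm:P1-wellposed}---no such bound is available in this low-regularity setting. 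I would resolve this using the two-dimensional Ladyzhenskaya/Gagliardo--Nirenberg inequality $\|v\|_{L^4}^2\le C\|v\|\,\|\nabla v\|$, which bounds $\|g(u^m)\|_{L^2(I;L^2)}$ (recall $g(u)=-(u+\tfrac12 u^2)$) purely in terms of the energy norms already controlled, and which lets me estimate $\|\nabla u^m_t\|$ by $\|u^m_t\|+\|p^m_t\|$ through \eqref{eq5b}. Choosing the Young constants to absorb $\|u^m_t\|^2+\|p^m_t\|^2$ then delivers the bound on $u^m_t$ in $L^2(I;L^2)$; feeding this into the $\chi=p^m_t$ identity controls $\|\nabla p^m_t\|_{L^2(I;L^2)}$ and hence $p^m_t$ in $L^2(I;H^1)$. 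Here the boundedness of $\|\nabla u^m(0)\|$ and $\|p^m(0)\|$ uses the hypothesis $(u_0,p_0)\in H_0^1(\Omega)\times H_0^1(\Omega)$.

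With these $m$-uniform bounds in hand, the remaining steps parallel \cref{thm:P1-wellposed}. Banach--Alaoglu extracts a subsequence converging weak-$\ast$/weakly to some $(u,p)$ in the respective spaces, and the Aubin--Lions lemma (using $u^m$ bounded in $L^2(I;H_0^1)$ and $u^m_t$ in $L^2(I;L^2)$, with $H_0^1\hookrightarrow\hookrightarrow L^2$) gives strong convergence of $u^m$ in $L^2(I;L^2)$; interpolating this against the uniform $L^\infty(I;L^2)\cap L^2(I;H_0^1)$ bound upgrades it to strong convergence in $L^2(I;L^4)$, which is precisely what is needed to pass to the limit in the quadratic term $(g(u^m),\nabla\chi)$. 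Passing to the limit in the linear terms together with a density argument recovers \eqref{eq5a}--\eqref{eq5b} for all admissible test functions, and an integration-by-parts-in-time argument identifies $u(0)=u_0$ (with $p(0)=p_0$ following from \eqref{eq5b} at $t=0$). Uniqueness follows by subtracting two solutions, testing the difference of \eqref{eq5a} with $\chi=u_1-u_2$, and again invoking the $L^4$ inequality to bound the nonlinear difference; the resulting differential inequality closes via Gr\"onwall with a coefficient that is only $L^1(I)$ in time (because $\|u_i\|_{L^4}^4\in L^1(I)$), which still forces the difference to vanish. Throughout, I expect the nonlinear forcing in the mixed, low-regularity setting to be the crux, with the two-dimensional $L^4$ interpolation inequality serving as the tool that replaces the $L^\infty$ control available in the primal analysis.
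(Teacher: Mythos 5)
Your proposal is correct and follows the same Faedo--Galerkin skeleton as the paper's proof (\cref{existmixed}): compression onto a basis of $H_0^1(\Omega)$ with $p^m$ slaved to $u^m$ through the discrete constraint (the paper's Appendix~B reduces this to an ODE system with symmetric positive definite leading matrix, exactly as you describe), the energy identity $\tfrac{\mathrm{d}}{\mathrm{d}t}\big(\|u^m\|^2+\|p^m\|^2\big)+2\alpha\|\nabla u^m\|^2=0$ via \eqref{eqn7}, separate bounds for $u_t^m$ and $p_t^m$ obtained by testing with time derivatives, Banach--Alaoglu plus Aubin--Lions for the limit passage, identification of the initial datum by integration by parts in time, and Gr\"onwall for uniqueness. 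The one place where you genuinely diverge is the crux you yourself flag: control of $g(u^m)$ without $L^\infty(\Omega)$ bounds. The paper's mechanism is to test the constraint \eqref{eq5b} with $\chi'=u^m$, which gives the pointwise-in-time bound $\|\nabla u^m(t)\|^2=(p^m(t),u^m(t))\le\tfrac12\big(\|u^m(t)\|^2+\|p^m(t)\|^2\big)$ (cf.\ \eqref{ce}); combined with the two-dimensional embedding $H^1(\Omega)\hookrightarrow L^4(\Omega)$ this makes $\|g(u^m)\|$ bounded uniformly in time, so the time-derivative estimates become pointwise algebraic inequalities, $\|u_t^m(t)\|^2\le C\big(\|g(u^m)\|^2+\|p^m(t)\|^2\big)$, that never touch the initial data. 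You instead invoke the Ladyzhenskaya interpolation $\|v\|_{L^4}^2\le C\|v\|\,\|\nabla v\|$ to bound $g(u^m)$ only in $L^2(I;L^2(\Omega))$, retain the term $\tfrac{\alpha}{2}\tfrac{\mathrm{d}}{\mathrm{d}t}\|\nabla u^m\|^2$, and integrate in time, which is where you spend the hypothesis $u_0\in H_0^1(\Omega)$. Both routes close; the paper's makes clear that this step needs no $H^1$ regularity of the data (its stability lemma, \cref{thm1.}, assumes only $L^2$), while yours is self-contained from the energy estimate alone. Two points where your write-up is actually more careful than the paper's: you upgrade the Aubin--Lions convergence to strong convergence in $L^2(I;L^4(\Omega))$ before passing to the limit in the quadratic term, a step the paper leaves implicit, and your uniqueness argument makes explicit, via the $L^4$ interpolation and an $L^1$-in-time Gr\"onwall coefficient, the bound that the paper compresses into an unproved ``stability estimate for $g$''.
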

See \cref{existmixed} for the proof. We next obtain error estimates for a spatially discrete scheme over a bounded domain $\Omega\subseteq\mathbb{R}^{n}$ with n $\geq$ 2. Let $S_h$ be a finite-dimensional subspace of $H_0^1(\Omega)$ with small mesh parameter $h$ $ (0<h<1)$, with the following property (see \cite{thomee2007galerkin}) for $r\geq2$: There exists a constant C for $u\in H^r(\Omega) \cap H^1_0(\Omega)$ such that
\begin{equation}
\inf_{\chi \in S_{h}}\{\|u-\chi\|+h\|\nabla(u-\chi)\|  \}\leq Ch^{r}\|u\|_{r}.
\end{equation}
%Here, $r$ is the order of accuracy of $S_h$. 
For $r=2$, consider the space of continuous piecewise polynomials ($P_1)$ as an example of such a type of subspace. In the case $r>2$, $S_h$ generally comprises of
piecewise polynomials having degree at most $r-1$ denoted as $P_{r-1}$ (Page 4, \cite{thomee2007galerkin}). Now, we define the following continuous time semidiscrete Galerkin approximation to the problem \eqref{eq1*}-\eqref{eq3}.

Find $u_h, p_h : [0,T] \to S_h$ such that
        \begin{subequations}\label{eq:P2-semi}
\begin{align}
	(u^h_{t}(t),\chi)+(\nabla p^h_{t}(t),\nabla \chi)+\alpha(p^h(t),  \chi)=(\nabla\cdot g(u^h),  \chi), &\quad \;\chi \;\in S_h,\label{eq77.a}\\
	(\nabla u^h(t),\nabla \chi')=(p^h(t),  \chi'), &\quad \;\chi' \;\in S_h,\label{eq77.b}\\
u^h(0)=u_0^h,
\end{align}\label{eq77.}
	\end{subequations}
where $u_0^h \in S_h$ is a suitable approximation to $u_0$. Our semi-discrete error estimate for the mixed formulation that parallels \cref{thmerror} is as follows:
\begin{theorem}\label{thmerror.}
	Assume that $(u,p)$ and $(u^h,p^h)$ are the solutions of equations (\ref{eq5}) and (\ref{eq77.}), respectively and $u \in L^{\infty}(I;H_{0}^{1}(\Omega))~ \cap~ L^{2}(I;H^{r}(\Omega)) $ with $u_t \in  L^{2}(I;H^{r}(\Omega)) $,
 $p \in  L^{\infty}(I;H^{1}_0(\Omega))\cap L^{\infty}(I;H^{r}(\Omega))\cap L^{2}(I;H^{r}(\Omega)) $ with $p_t \in  L^{2}(I;H^{r}(\Omega)) $ and $p(0) \in H^{r}(\Omega)$, where $r$ is the order of accuracy of $S_h$ with $r\geq 2$ defined above. Also, suppose that $u^h(0)=\Pi_hu(0)$. Then, there exists a positive constant $C$ with the following condition
	\vspace{0.2cm}
	\begin{equation}
	\|u-u^h\|_{L^{\infty}(I;L^{2}(\Omega))}+ \|p-p^h\|_{L^{\infty}(I;L^{2}(\Omega))}+h\|u-u^h\|_{L^{\infty}(I;H_{0}^{1}(\Omega))}\leq Ch^r.\label{eq48.}
	\end{equation}
\end{theorem}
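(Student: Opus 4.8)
The plan is to adapt the Ritz-projection error-splitting technique behind \cref{thmerror} to the coupled mixed system. Introduce the elliptic (Ritz) projection $R_h : H_0^1(\Omega) \to S_h$ defined by $(\nabla(w - R_h w), \nabla \chi) = 0$ for all $\chi \in S_h$; by the stated approximation property of $S_h$ it obeys $\|w - R_h w\| + h\|\nabla(w - R_h w)\| \leq C h^{r}\|w\|_{r}$. I would split both errors through $R_h$, writing $u - u^h = \rho_u + \theta_u$ and $p - p^h = \rho_p + \theta_p$ with $\rho_u = u - R_h u$, $\theta_u = R_h u - u^h$, $\rho_p = p - R_h p$, $\theta_p = R_h p - p^h$. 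The projection parts $\rho_u,\rho_p$ and their time derivatives are immediately $O(h^{r})$ in $L^2$ and $O(h^{r-1})$ in $H^1$ by approximation theory together with the regularity hypotheses, so the problem reduces to estimating the finite-element components $\theta_u, \theta_p \in S_h$.

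Subtracting \eqref{eq77.a}--\eqref{eq77.b} from \eqref{eq5a}--\eqref{eq5b} tested over $S_h$, and using the defining orthogonality of $R_h$ to annihilate the gradient term in $\rho_p$, I obtain the error equations
\begin{align*}
(\theta_{u,t}, \chi) + (\nabla \theta_{p,t}, \nabla \chi) + \alpha(\theta_p, \chi) &= -(\rho_{u,t}, \chi) - \alpha(\rho_p, \chi) - (g(u) - g(u^h), \nabla \chi), \\
(\nabla \theta_u, \nabla \chi') &= (\rho_p + \theta_p, \chi'),
\end{align*}
valid for all $\chi,\chi' \in S_h$, where the nonlinear term has been integrated by parts. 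The crux is the test-function choice: testing the first equation with $\chi = \theta_u$ and the second with $\chi' = \theta_{p,t} \in S_h$, the cross term $(\nabla \theta_{p,t}, \nabla \theta_u)$ produced by the first is converted by the second into $(\rho_p + \theta_p, \theta_{p,t})$, which supplies a clean $\tfrac12\frac{d}{dt}\|\theta_p\|^2$. Rewriting $(\rho_p,\theta_{p,t}) = \frac{d}{dt}(\rho_p,\theta_p) - (\rho_{p,t},\theta_p)$ then yields the energy identity
\begin{equation*}
\frac12 \frac{d}{dt}\bigl( \|\theta_u\|^2 + \|\theta_p\|^2 \bigr) + \frac{d}{dt}(\rho_p, \theta_p) + \alpha(\theta_p, \theta_u) = -(\rho_{u,t}, \theta_u) - \alpha(\rho_p, \theta_u) - (g(u) - g(u^h), \nabla \theta_u) + (\rho_{p,t}, \theta_p).
\end{equation*}

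The main obstacle is the nonlinear term $(g(u) - g(u^h), \nabla \theta_u)$, which awkwardly exposes the gradient of the discrete error. I would control it in two moves. First, the second error equation with $\chi' = \theta_u$ and Poincaré's inequality give $\|\nabla \theta_u\| \leq C(\|\rho_p\| + \|\theta_p\|)$, so $\nabla\theta_u$ never needs an independent estimate. Second, the stability estimate for $g$---which rests on an $L^\infty$ bound for both $u$ (from its regularity) and $u^h$ (from a mixed-scheme stability estimate paralleling \cref{lm2})---gives $\|g(u) - g(u^h)\| \leq C\|u - u^h\| \leq C(\|\rho_u\| + \|\theta_u\|)$. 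The nonlinear term is then bounded by $C(\|\rho_u\| + \|\theta_u\|)(\|\rho_p\| + \|\theta_p\|)$, and Young's inequality splits it into $O(h^{2r})$ projection contributions and terms $\|\theta_u\|^2,\|\theta_p\|^2$ suitable for Grönwall. Securing the requisite $L^\infty$ control of $u^h$ in two dimensions, where $H^1 \not\hookrightarrow L^\infty$, is the genuinely delicate point and the step I expect to demand the most care.

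To close, I would integrate the energy identity over $[0,t]$, move $\frac{d}{dt}(\rho_p,\theta_p)$ to the right and absorb $(\rho_p(t),\theta_p(t))$ into the left via Young's inequality, bound every projection term by $O(h^{2r})$ using $u,u_t,p,p_t \in L^2(I;H^r)$ and $p \in L^\infty(I;H^r)$, and control the initial discrete errors: $\theta_u(0)$ is $O(h^{r})$ (indeed zero if $\Pi_h = R_h$), while $\theta_p(0)$ is $O(h^{r})$ because $p^h(0)$ is the discrete solve of \eqref{eq77.b} and $p(0) \in H^r(\Omega)$. Grönwall's lemma then delivers $\sup_{t}(\|\theta_u(t)\|^2 + \|\theta_p(t)\|^2) \leq C h^{2r}$. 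The three pieces of \eqref{eq48.} follow by the triangle inequality: the $L^\infty(I;L^2)$ bounds for $u-u^h$ and $p-p^h$ from the $\theta$-estimate together with $\|\rho_u\|,\|\rho_p\| = O(h^{r})$, and the term $h\|u-u^h\|_{L^\infty(I;H_0^1)}$ from $h\|\nabla\rho_u\| = O(h^{r})$ together with $h\|\nabla\theta_u\| \leq Ch(\|\rho_p\| + \|\theta_p\|) = O(h^{r+1})$.
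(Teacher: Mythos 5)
Your proposal follows essentially the same route as the paper's own proof: the same Ritz-projection splitting (with $\rho,\theta$ for $u$ and $\eta,\xi$ for $p$), the same error equations, the same test-function pairing ($\chi=\theta_u$ in the first equation, $\chi'=\xi_t$ and $\chi'=\theta_u$ in the second), the same bound $\|\nabla\theta_u\|\leq C(\|\rho_p\|+\|\theta_p\|)$, the same $O(h^{r})$ estimate of $\xi(0)$ from $\theta_u(0)=0$ and $p(0)\in H^{r}(\Omega)$, and the same Gr\"onwall closure followed by the triangle inequality. The one point worth noting is that the $L^{\infty}$ control of $u^h$ needed for the Lipschitz bound on $g$ --- which you rightly flag as the delicate step in two dimensions, where $H^1 \not\hookrightarrow L^{\infty}$ --- is passed over silently in the paper (its stability lemma for the mixed semidiscrete scheme only provides $L^{q}$ bounds for finite $q$), so your proposal is, if anything, more candid about that gap than the published argument.
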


We next provide a fully discrete approach to approximate the weak solution (\ref{eq5}) by employing the same notations that are used in section \ref{ssec:P1-fully}.
%Assume that $N$ is the number of instants we wish to record our solution. For that, we discretize the time interval $J=[0, T]$ into $N$ equal subintervals, say $0=t^0<t^1<\cdots<t^N=T$ with $t^m=mk,\; k=\frac{T}{N}$ being the step size in time.  For any sequence $\{Z^m\}_{m=0}^N \subset L^2(\Omega)$, we define 
%\begin{eqnarray*}
	%Z^m=Z(t^m)\;\;\; \mbox{and} \;\;\;\delta_t %Z^m=\frac{Z^{m}-Z^{m-1}}{k},\;\;\;  \forall \;\;m=1,\ldots,N.
%\end{eqnarray*}
We employ the backward Euler approach for the fully discretization of equation \eqref{eq77.}: Find the approximate solution  $(U^{m}, P^m)$ of $(u(t^{m}), p(t^{m}))$ such that
\begin{subequations}\label{eq:P2-full}
\begin{align}
(\delta_t U^m,\chi)+(\nabla \delta_t P^m,\nabla \chi)+\alpha( P^{m},  \chi) &=( \nabla\cdot g( U^{m}),  \chi),                \label{eq85.a}     \\
 (\nabla U^m,\nabla \chi')&=( P^m,  \chi') ,                                    \label{eq85.b}         \\
U^{0}&=u_0^h.                                                                                                                        \end{align} \label{eq85.}
\end{subequations}

The following result provides existence and uniqueness of solution $U^m$ for the fully discrete scheme (\ref{eq85.}), and is analogous to \cref{thm:P1-full-wellposed}.
\begin{theorem}\label{thm:P2-full-wellposed}
	Suppose $(U^{0},P^0),(U^{1},P^1),\ldots,(U^{m-1},P^{m-1})$ is given  for each  $m>1$. Then, $(U^{m},P^m)$ exists  and satisfies (\ref{eq85.}) uniquely.
\end{theorem}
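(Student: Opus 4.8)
My plan is to follow the template of \cref{thm:P1-full-wellposed}: reduce the coupled pair to a single nonlinear equation and apply the Brouwer fixed point theorem, proceeding by induction on $m$. The base data $U^{0}=u_0^h$ and $P^{0}$ (determined from $U^0$ through \eqref{eq85.b}) are given, so it suffices to treat the inductive step. The key structural observation is that the constraint \eqref{eq85.b} is a linear, uniquely solvable relation: since $(\cdot,\cdot)$ is positive definite on the finite-dimensional space $S_h$, for each $v\in S_h$ there is a unique element $L_hv\in S_h$ satisfying $(L_hv,\chi')=(\nabla v,\nabla\chi')$ for all $\chi'\in S_h$. Consequently $(U^m,P^m)$ solves \eqref{eq85.} if and only if $P^m=L_hU^m$ and $U^m$ solves the single equation obtained by inserting $P^m=L_hU^m$ and $P^{m-1}=L_hU^{m-1}$ (the latter known from the previous step) into \eqref{eq85.a}.

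For existence I fix the given data $(U^{m-1},P^{m-1})$ and define a map $\Lambda:S_h\to S_h$ by
\begin{equation*}
(\Lambda(\psi),\chi)=(\psi-U^{m-1},\chi)+(\nabla(L_h\psi-P^{m-1}),\nabla\chi)+k\alpha(L_h\psi,\chi)-k(\nabla\cdot g(\psi),\chi),
\end{equation*}
for all $\psi,\chi\in S_h$, so that a zero of $\Lambda$ yields, on setting $U^m=\psi$ and $P^m=L_h\psi$, a solution of the $k$-rescaled \eqref{eq85.a} together with \eqref{eq85.b}. Continuity of $\Lambda$ is immediate since $L_h$ is linear and $g$ is polynomial. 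To verify the positivity hypothesis I test with $\chi=\psi$ and exploit \eqref{eq85.b} three times: choosing $\chi'=L_h\psi$ gives $(\nabla L_h\psi,\nabla\psi)=\|L_h\psi\|^2$, choosing $\chi'=\psi$ gives $(L_h\psi,\psi)=\|\nabla\psi\|^2$, and choosing $\chi'=P^{m-1}$ rewrites $(\nabla P^{m-1},\nabla\psi)=(L_h\psi,P^{m-1})$; moreover the nonlinear term vanishes, $(\nabla\cdot g(\psi),\psi)=0$, by the divergence identity \eqref{eqn7} (valid because $\psi\in S_h\subset H_0^1(\Omega)$). This yields
\begin{equation*}
(\Lambda(\psi),\psi)=\|\psi\|^2-(U^{m-1},\psi)+\|L_h\psi\|^2-(L_h\psi,P^{m-1})+k\alpha\|\nabla\psi\|^2.
\end{equation*}
Bounding the linear terms by Cauchy--Schwarz and Young's inequality and discarding the nonnegative contributions $\tfrac12\|L_h\psi\|^2$ and $k\alpha\|\nabla\psi\|^2$ leaves $(\Lambda(\psi),\psi)\geq\|\psi\|^2-\|U^{m-1}\|\,\|\psi\|-\tfrac12\|P^{m-1}\|^2$, which is strictly positive once $\|\psi\|=a_0$ for $a_0$ larger than the positive root of this quadratic (e.g. $a_0=\|U^{m-1}\|+\|P^{m-1}\|+1$). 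The Brouwer fixed point theorem then produces $\psi'\in S_h$ with $\Lambda(\psi')=0$, and $(U^m,P^m)=(\psi',L_h\psi')$ is the desired solution.

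For uniqueness I let $(U_1^m,P_1^m)$ and $(U_2^m,P_2^m)$ be two solutions sharing the same history, set $W=U_1^m-U_2^m$ and $Q=P_1^m-P_2^m$, and subtract the equations. The constraint forces $Q=L_hW$, so testing the difference of \eqref{eq85.a} with $\chi=W$ and using the same three identities converts the left-hand side into $\tfrac1k\|W\|^2+\tfrac1k\|Q\|^2+\alpha\|\nabla W\|^2$. On the right I integrate by parts and apply the Lipschitz (stability) estimate for $g$ together with the a priori $L^\infty$ bounds on the solutions to obtain $|(\nabla\cdot(g(U_1^m)-g(U_2^m)),W)|\leq C\|W\|\,\|\nabla W\|$, and then absorb the gradient factor via Young's inequality. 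After cancelling $\alpha\|\nabla W\|^2$ this gives $(\tfrac1k-\tfrac{C^2}{4\alpha})\|W\|^2+\tfrac1k\|Q\|^2\leq0$, forcing $W=0$ and $Q=0$ under the timestep restriction $k<4\alpha/C^2$, exactly as in the uniqueness part of \cref{thm:P1-full-wellposed}.

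The main obstacle is the mixed structure: unlike the primal case, $P^m$ is not a free unknown but is slaved to $U^m$ through the constraint \eqref{eq85.b}, and the terms $(\nabla P^m,\nabla\,\cdot)$ and $\alpha(P^m,\cdot)$ are not obviously sign-definite. The crux is therefore to recognize that \eqref{eq85.b} can be used, with the three test-function choices above, to convert precisely these cross terms into the manifestly nonnegative quantities $\|L_h\psi\|^2$ and $\|\nabla\psi\|^2$, while the nonlinear convective term is annihilated by \eqref{eqn7}; once this is in hand, both the Brouwer positivity condition and the uniqueness coercivity follow as in the primal formulation.
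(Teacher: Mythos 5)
Your proof is correct, and it reaches the same conclusion by a genuinely different route from the paper's argument in \cref{fully-mix}. The paper keeps the pair $(U^m,P^m)$ as the unknown and applies the Brouwer theorem on the product space $S_h\times S_h$: it defines a two-component map $\Lambda=[\Lambda_1,\Lambda_2]$ in \eqref{eq123}, with $\Lambda_2$ encoding the constraint \eqref{eq85.b}, so that the cross terms $(\nabla\psi_2,\nabla\psi_1)$ cancel between the two components upon testing with $\psi=(\psi_1,\psi_2)$, while the leftover term $-(\nabla P^{m-1},\nabla\psi_1)$ is absorbed into $k\alpha\|\nabla\psi_1\|^2$ by Young's inequality; as a result the paper's sphere radius satisfies $\|\psi\|^2_{S_h\times S_h}=\|U^{m-1}\|^2+\tfrac{1}{2k\alpha}\|\nabla P^{m-1}\|^2+C$ and degenerates as $k\to 0$. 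You instead eliminate $P^m$ through the discrete operator $L_h$ (well defined because the $L^2$ inner product is positive definite on the finite-dimensional $S_h$) and run Brouwer on a single copy of $S_h$; your three uses of the constraint convert the cross terms into the nonnegative quantities $\|L_h\psi\|^2$ and $k\alpha\|\nabla\psi\|^2$ plus $(L_h\psi,P^{m-1})$, which Young's inequality controls by $\tfrac{1}{2}\|L_h\psi\|^2+\tfrac{1}{2}\|P^{m-1}\|^2$, yielding the $k$-uniform radius $a_0=\|U^{m-1}\|+\|P^{m-1}\|+1$ --- a small but real gain over the paper. The uniqueness steps also differ: the paper bounds $\|\nabla W^m\|^2\leq C(\|W^m\|^2+\|Q^m\|^2)$ via the constraint, see \eqref{eqn29*.}, and closes with a discrete Gr\"onwall-type inequality requiring $1-C_3k>0$, whereas you keep $\alpha\|\nabla W\|^2$ on the left and absorb the nonlinear term into it by Young's inequality, requiring $k<4\alpha/C^2$; both routes impose a smallness condition on $k$, and both rest on the same Lipschitz estimate $\|g(U_1^m)-g(U_2^m)\|\leq C\|W\|$, which (in your write-up as in the paper's) implicitly uses a priori boundedness of the discrete solutions.
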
	
The proof is contained in \cref{fully-mix}.
Our last major result is an error estimate that parallels \cref{thmBE}:
\begin{theorem}\label{thmBE*}
	Assume that $u$ and $U^m$ are the solutions of equation (\ref{eq5}) and (\ref{eq85.}), respectively and $u \in L^{\infty}(I;H_{0}^{1}(\Omega)) \cap L^{\infty}(I;H^{r}(\Omega)) $ with  $u_t \in  L^{2}(I;H^{r}(\Omega)) $ and $u_{tt} \in  L^{2}(I;L^{2}(\Omega)) $. Further, assume that $p \in L^{\infty}(I;H_{0}^{1}(\Omega)) \cap L^{\infty}(I;H^{r}(\Omega)) $ with  $p_t \in  L^{2}(I;H^{r+1}(\Omega)) $ and $p_{tt} \in  L^{2}(I;H^{1}(\Omega)) $. Also, $U^0=\Pi_hu(0)$ and $p(0)\in H^r(\Omega)$. Then, there exists a positive constant $C$ such that the following estimate holds:
	\begin{equation}
	\|u(t^J)-U^J\|+\|p(t^J)-P^J\|+h\|u(t^J)-U^J\|_1\leq C(h^{r}+k), \; J=1,2\ldots N.\nonumber
	\end{equation}
\end{theorem}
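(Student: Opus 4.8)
The plan is to mirror the backward-Euler analysis of the primal scheme in \cref{thmBE}, adapted to the mixed pair $(u,p)$ and exploiting the constraint equation \eqref{eq5b} to recover spatial coercivity. First I would introduce the coupled elliptic projection $\Pi_h$ already used for the semidiscrete estimate \cref{thmerror.}: take $\Pi_h p$ to be the Ritz ($H^1$-seminorm) projection of $p$ onto $S_h$, and define $\Pi_h u \in S_h$ by $(\nabla \Pi_h u, \nabla \chi') = (\Pi_h p, \chi')$ for all $\chi' \in S_h$, so that the projection reproduces the discrete form of \eqref{eq5b}. I then split the errors as $u(t^m)-U^m = \rho^m + \theta^m$ and $p(t^m)-P^m = \eta^m + \zeta^m$, where $\rho^m = u(t^m)-\Pi_h u(t^m)$, $\theta^m = \Pi_h u(t^m)-U^m$, $\eta^m = p(t^m)-\Pi_h p(t^m)$ and $\zeta^m = \Pi_h p(t^m)-P^m$. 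The projection components $\rho^m,\eta^m$ and their discrete time-differences are controlled by the same $O(h^r)$ approximation bounds used for \cref{thmerror.}; the real work is to bound $\theta^m$ and $\zeta^m$.

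Next I would subtract the fully discrete scheme \eqref{eq85.} from the continuous formulation \eqref{eq5} evaluated at $t=t^m$ and rewrite everything in terms of $\theta^m,\zeta^m$. The decisive feature of the chosen projection is that it cleans up the coupling: the error version of \eqref{eq5b} collapses to $(\nabla\theta^m,\nabla\chi') = (\zeta^m,\chi')$ with no projection residual, and, because $\Pi_h p$ is the Ritz projection, the troublesome gradient consistency term $(\nabla\delta_t\eta^m,\nabla\chi)$ in the first error equation vanishes identically for $\chi\in S_h$. What remains on the right-hand side of the first equation is a consistency functional $\sigma^m$ that I would split, exactly as in \cref{thmBE}, into a time-truncation piece (estimated by Taylor's theorem in integral form, contributing $O(k)$ through $\int\|u_{tt}\|$ and $\int\|p_{tt}\|$) and a projection-in-time piece (contributing $O(h^r)$ through $\int\|u_t\|_r$ and $\int\|p_t\|_r$), together with the nonlinear term $-(g(u(t^m))-g(U^m),\nabla\chi)$ and the zeroth-order projection term $\alpha(\eta^m,\chi)$.

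The core estimate would test the first error equation with $\chi=\theta^m$ and use the clean constraint equation twice: once with $\chi'=\theta^m$ to rewrite $\alpha(\zeta^m,\theta^m)=\alpha\|\nabla\theta^m\|^2$, and once with $\chi'=\delta_t\zeta^m$ to rewrite the $H^1$ term as $\tfrac12\delta_t\|\zeta^m\|^2+\tfrac{k}{2}\|\delta_t\zeta^m\|^2$. Together with the discrete product rule for $(\delta_t\theta^m,\theta^m)$ this yields an inequality of the form $\tfrac12\delta_t(\|\theta^m\|^2+\|\zeta^m\|^2)+\alpha\|\nabla\theta^m\|^2 \le |\sigma^m\text{-terms}|+|\text{nonlinear}|$, with the coercive $\alpha\|\nabla\theta^m\|^2$ available to absorb every right-hand side factor carrying $\nabla\theta^m$ through Young's inequality. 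A discrete Gr\"onwall argument (or the linear recursion in $\sqrt{\|\theta^m\|^2+\|\zeta^m\|^2}$ used in \cref{thmBE}), together with $\theta^0=\zeta^0=0$ coming from $U^0=\Pi_h u(0)$ and the induced initialization $P^0=\Pi_h p(0)$, then gives $\|\theta^J\|+\|\zeta^J\| \le C(h^r+k)$; reading off the constraint equation with $\chi'=\theta^J$ gives $\|\nabla\theta^J\| \le C(h^r+k)$ as well. The claimed estimate follows by the triangle inequality, the surviving $O(h^{r-1})$ in $\|\nabla\rho^J\|$ being exactly compensated by the prefactor $h$ in $h\|u(t^J)-U^J\|_1$.

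The step I expect to be the genuine obstacle is control of the nonlinear term. In the primal setting the bound $\|g(u)-g(U^m)\| \le C\|u-U^m\|$ came for free from the $H^2\hookrightarrow L^\infty$ stability of \cref{lemma11}; here $U^m$ lives only in $S_h\subset H_0^1$, and in two dimensions $H^1\not\hookrightarrow L^\infty$, so linearizing the quadratic part of $g$ requires a uniform $L^\infty$ (or at least $L^4$) bound on $U^m$. I would obtain this either from the mixed-scheme stability estimate (the analogue of \cref{lemma11}, controlling $\|U^m\|$ and $\|P^m\|$ and hence, via discrete elliptic regularity applied to \eqref{eq85.b}, an $H^2$-type bound on $U^m$), or by a bootstrap in which the error estimate being proved keeps $\|U^m\|_{L^\infty}$ close to $\|u\|_{L^\infty}$; alternatively, in 2D one may retain the quadratic term through the Ladyzhenskaya inequality $\|v\|_{L^4}\le C\|v\|^{1/2}\|\nabla v\|^{1/2}$ and absorb the resulting $\|\nabla\theta^m\|^{1/2}$ factor into the coercive term. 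A secondary point worth care is that the cancellations above hinge entirely on the coupled choice of $\Pi_h$: a naive $L^2$ projection of $p$ would reinstate a gradient consistency term of size $O(h^{r-1})$ and degrade the $L^2$ rates, while an uncoordinated Ritz projection of both variables would leave a cross term $(\eta^m,\delta_t\zeta^m)$ forcing a summation-by-parts in time to avoid losing a factor of $k$.
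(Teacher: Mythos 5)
Your proposal is correct in outline and shares the paper's skeleton --- split $u(t^m)-U^m=\rho^m+\theta^m$ and $p(t^m)-P^m=\eta^m+\xi^m$, test the first error equation with $\theta^m$, use the constraint equation with $\chi'=\theta^m$ and $\chi'=\delta_t\xi^m$, apply a discrete Gr\"onwall argument, and finish with the triangle inequality --- but it diverges from the paper in the choice of projection, and this changes the bookkeeping in exactly the way you anticipated. The paper uses two \emph{independent} Ritz projections \eqref{5.3}--\eqref{5.4}, so its error constraint equation \eqref{eq49.b.} carries the residual $(\eta^m,\chi')$; when tested with $\delta_t\xi^m$ this produces the cross term $(\eta^m,\delta_t\xi^m)$, which the paper removes by precisely the discrete summation-by-parts you predicted, $(\eta^m,\delta_t\xi^m)=\delta_t(\eta^m,\xi^m)-(\delta_t\eta^m,\xi^{m-1})$ (see \eqref{eq49.b..}), telescoping after multiplication by $2k$ and summation over $m$, with $\|\delta_t\eta^m\|$ controlled as in \eqref{aa}. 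Your coupled projection eliminates this machinery; its only cost is that your $\Pi_h u$ is no longer the Ritz projection of $u$, so the $O(h^r)$ bounds on $\rho^m$ and $\delta_t\rho^m$ must be re-derived --- a step you assert but do not prove; it does hold, since the difference between your $\Pi_h u$ and the Ritz projection of $u$ solves a discrete Poisson problem with datum $\Pi_h p - p$ of size $O(h^r)$, so nothing is lost. A second genuine difference: the paper does not exploit the Ritz orthogonality $(\nabla\delta_t\eta^m,\nabla\chi)=0$, and instead estimates the full gradient consistency term $\|\nabla\delta_t p(t^m)-\nabla\delta_t(\Pi_h p(t^m))\|$ by brute force, which is the reason the hypothesis $p_t\in L^2(I;H^{r+1}(\Omega))$ appears in the statement; your route needs only $H^r$ regularity of $p_t$ for that piece (the $H^1$-norm time-truncation term, requiring $p_{tt}\in L^2(I;H^1(\Omega))$, survives in both treatments). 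Finally, on the nonlinear term you are more careful than the paper: the paper's estimate \eqref{end} simply invokes the Lipschitz (``stability'') bound $\|g(U^m)-g(u(t^m))\|\le C\|U^m-u(t^m)\|$, whose constant --- $g$ being quadratic --- requires pointwise control of $U^m$; since Lemma \ref{lemma11.} supplies only $L^q$ bounds for finite $q$ and $H^1\not\hookrightarrow L^\infty$ in 2D, the paper's step has exactly the gap you identify, and your Ladyzhenskaya-based absorption (Young's inequality with exponents $4$ and $4/3$ against the coercive $\alpha\|\nabla\theta^m\|^2$) is a sound way to close it.
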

The proof is contained in \cref{fully-mix}.

%\newpag
\section{Numerical Experiments} \label{Num}
We validate the theoretical results using  several examples. All numerical solutions are computed using the mixed formulation corresponding to formulations \eqref{eq:P2} and \eqref{eq:P2-full}. We employ the following norms for the error $e(t) = u(t)- u_h(t)$  at final time $t=t^N$, where $u$ and $u_h$ are the exact and numerical solution, respectively..
%. Here, to represent the convergence history of the finite element solution, we have employed the following norms for the error $e(t) = u(t)- u_h(t)$  at final time $t=t^N$.
\begin{align}
&\|e(t)\|_{L^{2}} := \Big(\int_{\Omega}|u(x,t)- u_h(x,t)|^{2}dx\Big)^{\frac{1}{2}},\nonumber\\
&\|e(t)\|_{H^1}: = \Big(\sum_{k=0}^{1}  |e(t)|^{2}_{H^k}\Big)^{\frac{1}{2}} = \Big( \|e(t)\|_{L^{2}}^{2}+\|\nabla e(t)\|_{L^{2}}^{2}\Big)^{\frac{1}{2}},\nonumber\\
&\|e(t)\|_{H^2}: = \Big(\sum_{k=0}^{2}  |e(t)|^{2}_{H^k}\Big)^{\frac{1}{2}} = \Big( \|e(t)\|_{L^{2}}^{2}+\|\nabla e(t)\|_{L^{2}}^{2}+\|\Delta e(t)\|_{L^{2}}^{2}\Big)^{\frac{1}{2}},\nonumber\\
&\|e(t)\|_{L^{\infty}} := \max_{i} |u(x_{i},t)- u_h(x_{i},t)|.\nonumber
\end{align}
%Here, we have represented $u$ and $u_h$ as analytic solution and numerical solution, respectively.
\begin{figure}[htpb]%\captionsetup{font=scriptsize}
	\centering
	\includegraphics[width=3.5cm, height=3.5cm]{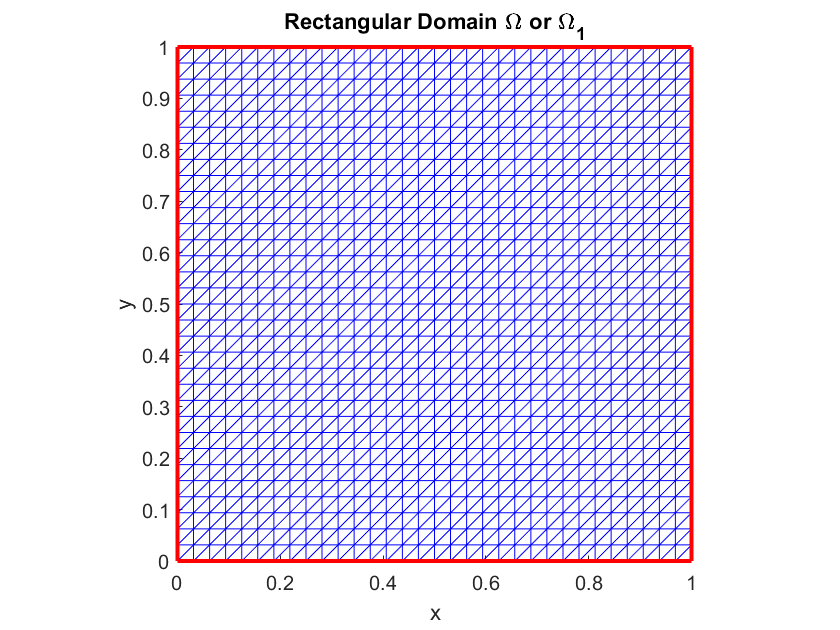}\;
	\includegraphics[width=3.5cm, height=3.5cm]{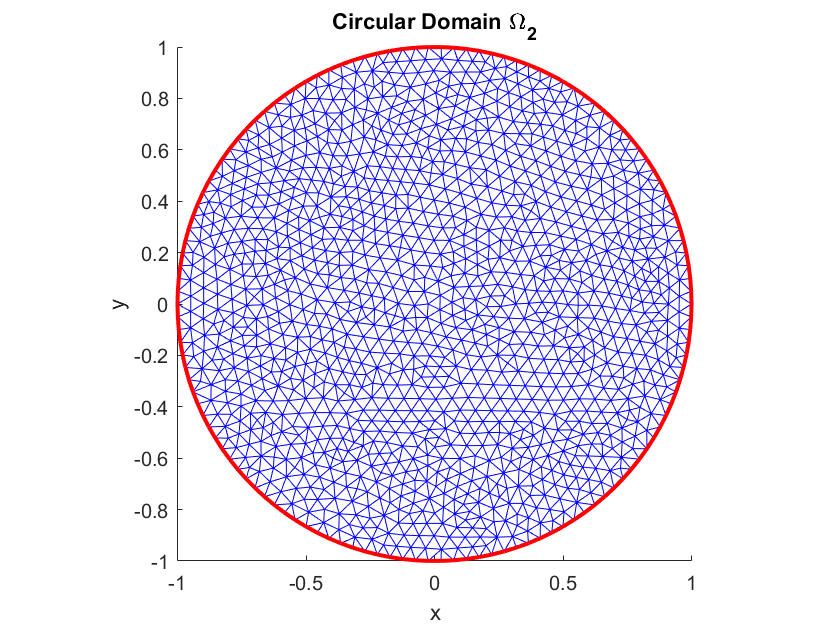}\;
		\includegraphics[width=3.5cm, height=3.5cm]{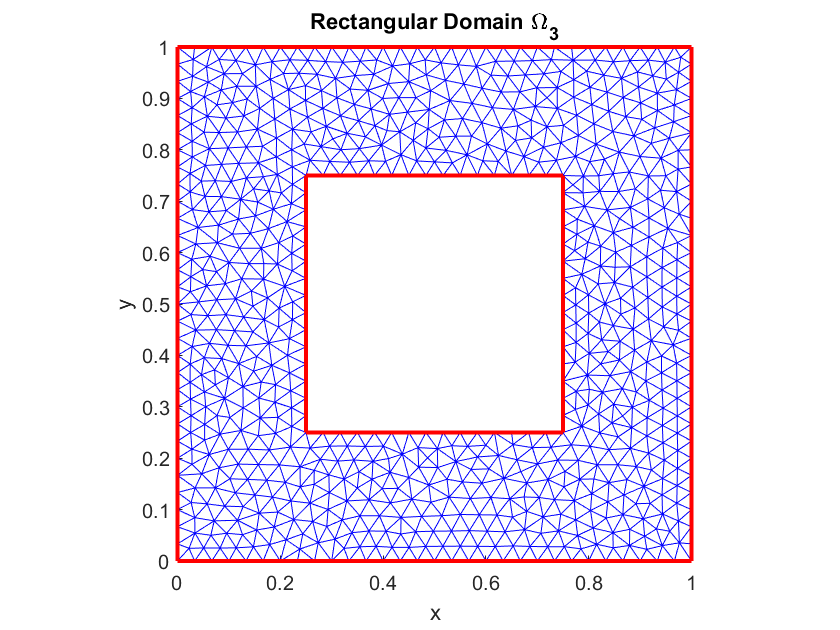}\\
			\includegraphics[width=3.5cm, height=3.5cm]{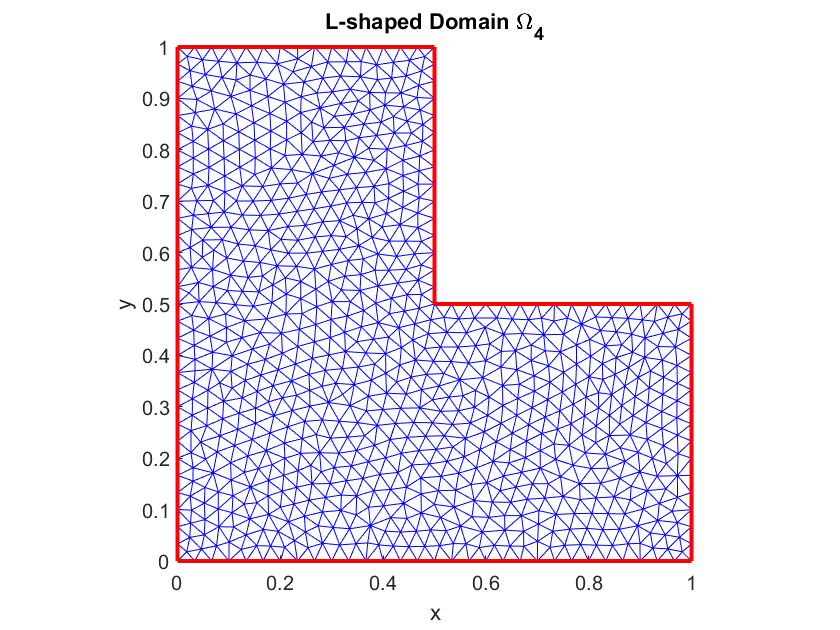}\;
				\includegraphics[width=3.5cm, height=3.5cm]{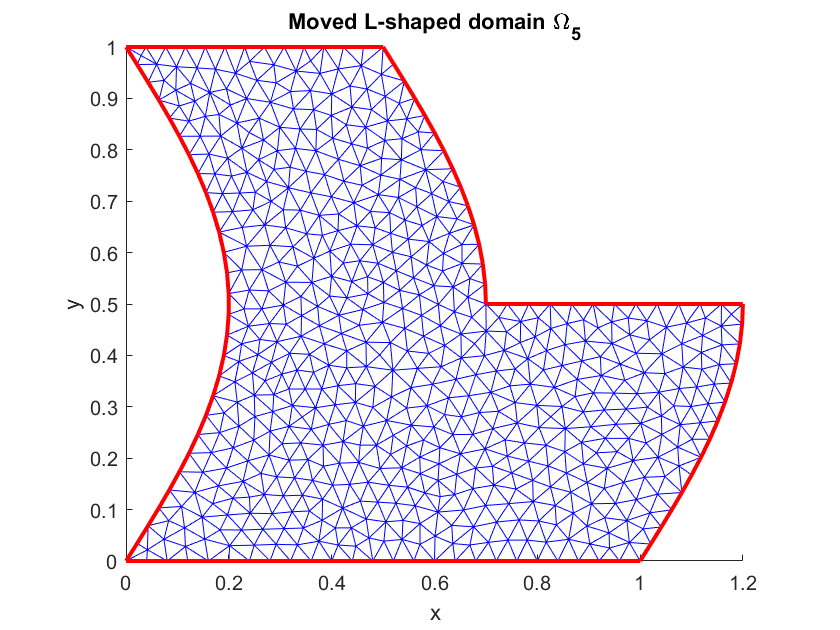}\;
					\includegraphics[width=3.5cm, height=3.5cm]{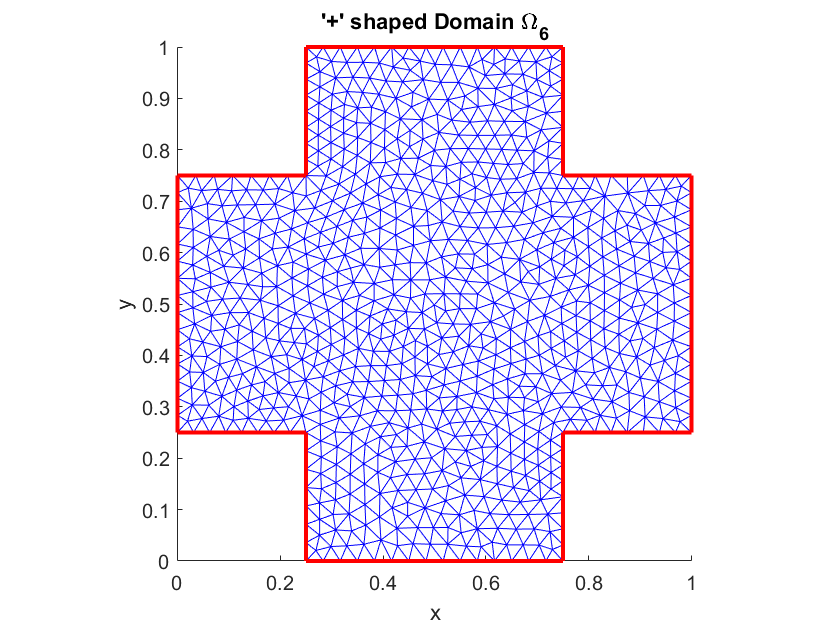}\\
        \caption{Six different domains $\Omega_i$, $i \in [6]$, considered in the examples.} \label{figg0}
\end{figure}
Computationally observed orders of convergence are computed in the standard way,
$$C^{1}-\mbox{order}= \frac{\log\big(\frac{\|e_1\|}{\|e_2\|}\big)}{\log\big(\frac{h_1}{h_2}\big)}, \;\;\;\;C^{2}-\mbox{order}= \frac{\log\big(\frac{\|e_1\|}{\|e_2\|}\big)}{\log\big(\frac{k_1}{k_2}\big)}.$$
Here, $e_1$ and $e_2$ are the errors corresponding to different step sizes $h_1, h_2$ in
spatial variable and $k_1, k_2$ in temporal variable. Several domains are considered in this paper. The rectangular domain $\Omega_1$ in Figure \eqref{figg0} is referred to as $\Omega$ unless and otherwise specified.
\begin{exm}\label{exx1}{Propagation of solitary wave in one-dimension}
  \rm
		We consider the one-dimensional Rosenau-Burgers model \eqref{eq1*},
		 $$u_{t}+u_{xxxxt} - u_{xx}+u_x+ uu_x =f\; \ \mbox{in}\;  [0,1]\times(0,T],\;T>0,$$
		where $f$ and the initial data are chosen so that the exact solution is given by,
		$$u(x,t)=e^{-t}x^3(1-x)^3,$$
		and boundary conditions $u(0,t)= u(1,t)=u_{xx}(0,t)=u_{xx}(1,t)=0$. %The initial condition and source function $f$ are chosen according to the exact solution. %The finite element solution with $h=0.01, k=0.01$ is shown in Figure \eqref{figg1}. One can see that the profile of finite element solutions is exactly similar to the exact solutions with some negligible error. 
                \Cref{tb1} (left) reveals the $L^2$ and $H^1$ errors and convergence rate using the BE scheme and Taylor-Hood finite element ${P_2}\times P_1$ with $k=0.01$ at time $T=1$. Table \eqref{tb1} (right) demonstrates the $C^{1}-\mbox{order}$ and $C^{2}-\mbox{order}$ with different $h$ and $k$ at time $T=1$ for ${P_2}\times P_1$ elements.  These results support the theory developed in Theorems \ref{thmerror.} and \ref{thmBE*}. 
%		\begin{figure}[htbp]%\captionsetup{font=scriptsize}
%			\centering
%			\includegraphics[width=0.48\textwidth]{}
%			\includegraphics[width=0.48\textwidth]{}
%                      \caption{Example \ref{exx1}: Exact and finite element solutions (left and right, respectively) for different terminal time $T$ with $k=0.01, h=0.01$.} \label{figg1}
%		\end{figure}
		
		\begin{table}[htbp]%\captionsetup{font=scriptsize}  \scriptsize
                  \begin{minipage}[b]{0.55\linewidth}
			\renewcommand{\arraystretch}{1.3}
			\centering
                        \resizebox{\textwidth}{!}{
			\begin{tabular}{c c c a a}
				$ {h} $ &   $\|e(t)\|_{L^{2}}$&$C^{1}-\mbox{order}$ &  $\|e(t)\|_{H^{1}}$&$C^{1}-\mbox{order}$ \\\toprule
				$1/4$ & ${4.4381 \times 10^{-6}}$ & $-$& ${1.2322 \times 10^{-4}}$ & ${ -}$ \\
				$1/8$ & ${7.8418 \times 10^{-7}}$ & ${2.5007}$& ${4.3693 \times 10^{-5}}$ & ${1.4958}$ \\
				$1/16$ & ${1.0607 \times 10^{-7}}$ & ${2.8861}$& ${1.1833 \times 10^{-5}}$ & ${ 1.8846}$ \\
				$1/32$ & ${1.3516 \times 10^{-8}}$ & ${2.9723}$& ${3.0165 \times 10^{-6}}$ & ${1.9719}$ \\
				$1/64$ & ${1.6977 \times 10^{-9}}$ & ${2.9931}$& ${7.5779 \times 10^{-7}}$ & ${1.9930}$ 
			\end{tabular}
                      }
%                        \captionof{table}{$L^2$ and $H^1$ errors and computational order with $k=0.01$, $T=1$ and ${P_2}\times P_1$ finite element in Example \ref{exx1}. }\label{tb1}
                  \end{minipage}\hfill
                  \begin{minipage}[b]{0.42\linewidth}
			\renewcommand{\arraystretch}{1.3}
			\centering
                        \resizebox{\textwidth}{!}{
			\begin{tabular}{c c c }
				$h=k$ &   $\|e(t)\|_{H^{2}}$&$C^{1}-\mbox{order}, C^{2}-\mbox{order}$\\\toprule
				$1/4$ & ${4.2996 \times 10^{-3}}$ & $-$  \\
				$1/8$ & ${2.7737 \times 10^{-3}}$ & $0.6324$\\
				$1/16$ & ${1.4750 \times 10^{-3}}$ & ${0.9111}$  \\
				$1/32$ & ${7.4882 \times 10^{-4}}$ & ${0.9780}$  \\
				$1/64$ & ${3.7582 \times 10^{-4}}$ & ${0.9946}$ 
			\end{tabular}
                      }
%			\caption{ $C^{1}-\mbox{order}$ and $C^{2}-\mbox{order}$ with $T=1$ and ${P_2}\times P_1$ finite element in Example \ref{exx1}. }\label{tb2}
                  \end{minipage}
                  \caption{\Cref{exx1} orders of convergence with $P_2 \times P_1$ finite elements. Left: $L^2$ and $H^1$ errors and computational order with $k=0.01$, $T=1$ . Right: $C^{1}-\mbox{order}$ and $C^{2}-\mbox{order}$ with $T=1$.}\label{tb1}
		\end{table}

%		\begin{table}[htbp]%\captionsetup{font=scriptsize}  \scriptsize
%			\setlength{\tabcolsep}{3.5pt}
%			\renewcommand{\arraystretch}{2}
%			\caption{ $C^{1}-\mbox{order}$ and $C^{2}-\mbox{order}$ with $T=1$ and ${P_2}\times P_1$ finite element in Example \ref{exx1}. }
%			\centering
%			\begin{tabular}{|c| c| c |  }
%				\hline
%				
%				$ h,k$ &   $\|e(t)\|_{H^{2}}$&$C^{1}-\mbox{order}\;\mbox{and}\;C^{2}-\mbox{order}$\\ 
%				\hline
%				$k=1/4,h=1/4$ & ${4.2996 \times 10^{-3}}$ & $-$  \\
%				$k=1/8,h=1/8$ & ${2.7737 \times 10^{-3}}$ & $0.6324$\\
%				$k=1/16,h=1/16$ & ${1.4750 \times 10^{-3}}$ & ${0.9111}$  \\
%				$k=1/32,h=1/32$ & ${7.4882 \times 10^{-4}}$ & ${0.9780}$  \\
%				$k=1/64,h=1/64$ & ${3.7582 \times 10^{-4}}$ & ${0.9946}$  \\
%				
%				\hline
%			\end{tabular}
%		\label{tb2}
%		\end{table}				

			\begin{figure}[htbp]%\captionsetup{font=scriptsize}
			\centering
			\includegraphics[width=0.24\textwidth]{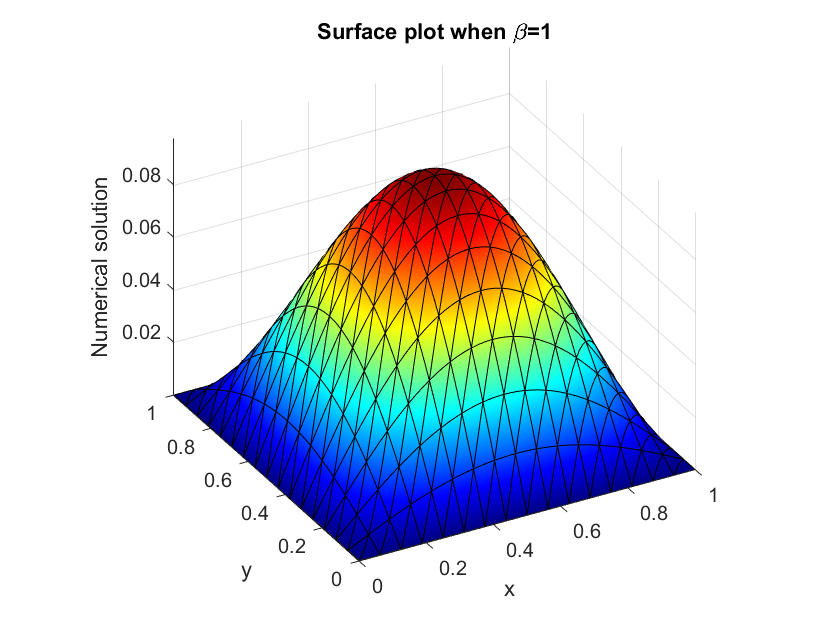}
			\includegraphics[width=0.24\textwidth]{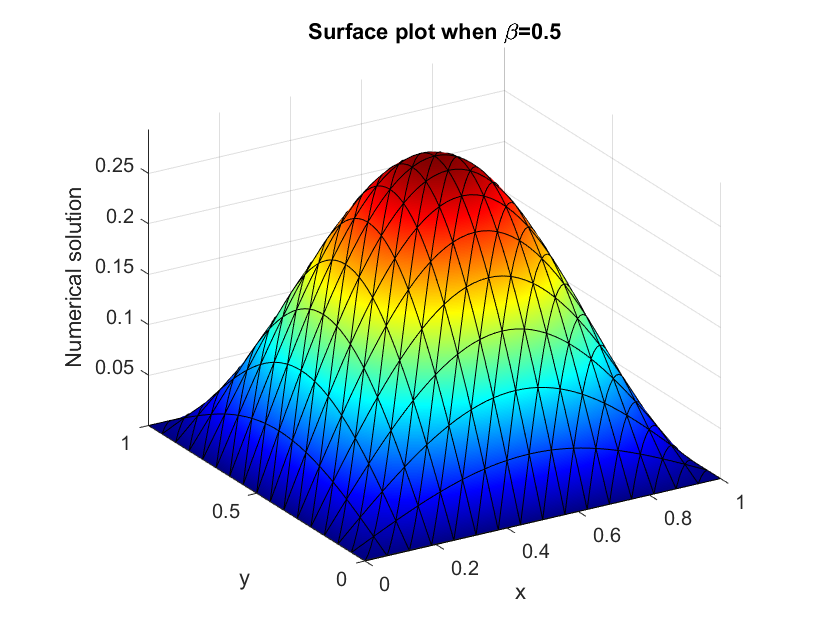}
			\includegraphics[width=0.24\textwidth]{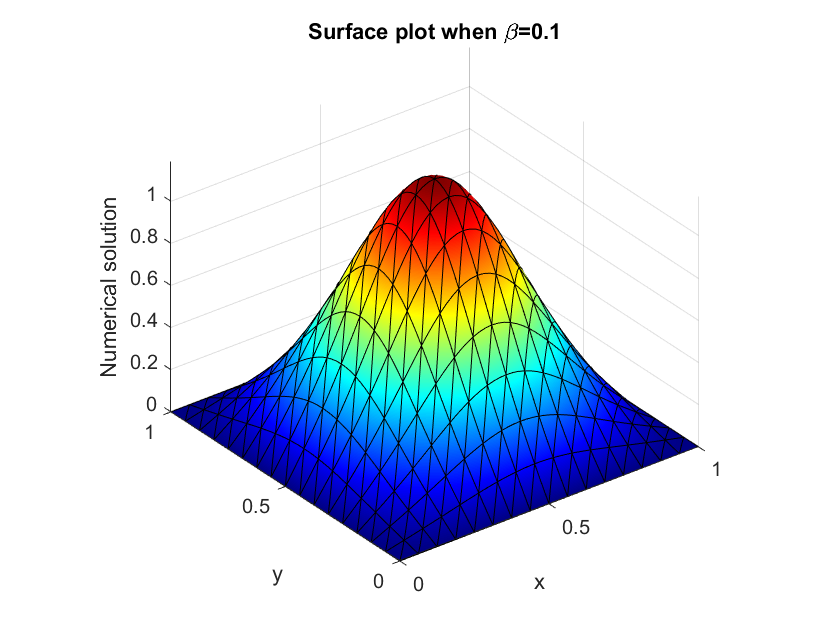}
			\includegraphics[width=0.24\textwidth]{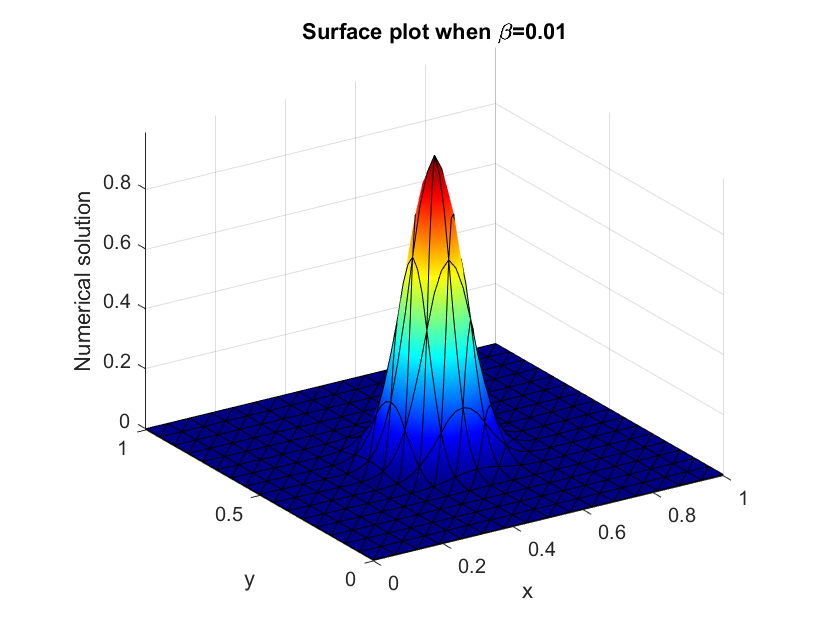}\\
			\includegraphics[width=0.24\textwidth]{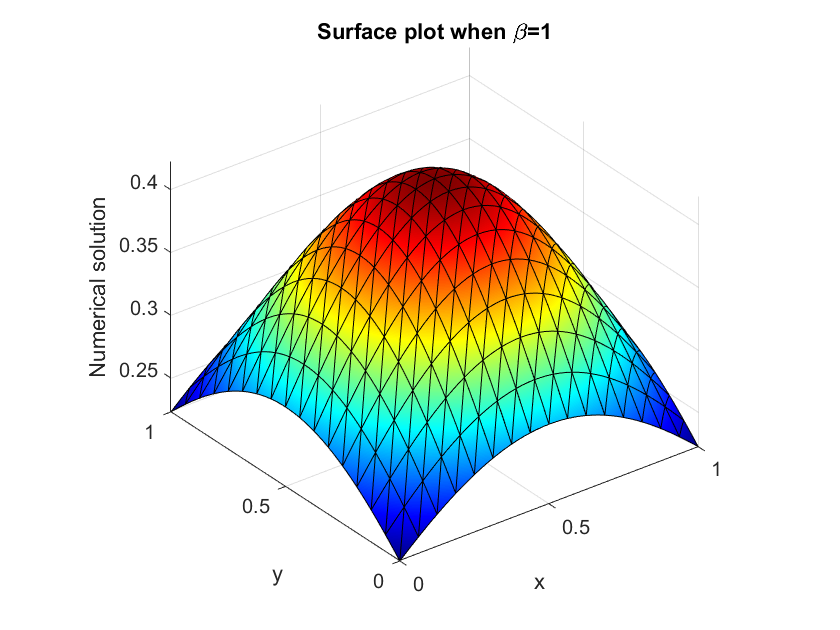}
			\includegraphics[width=0.24\textwidth]{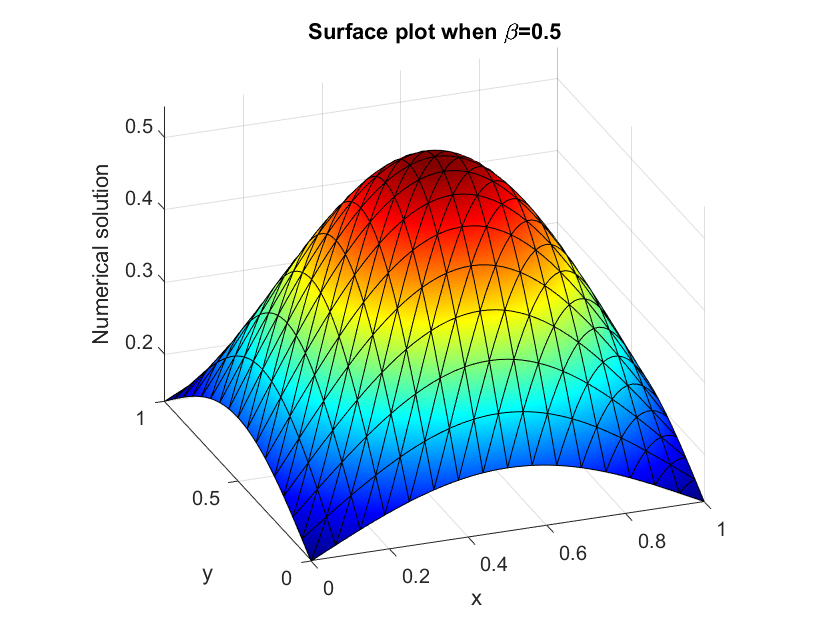}
                        \includegraphics[width=0.24\textwidth]{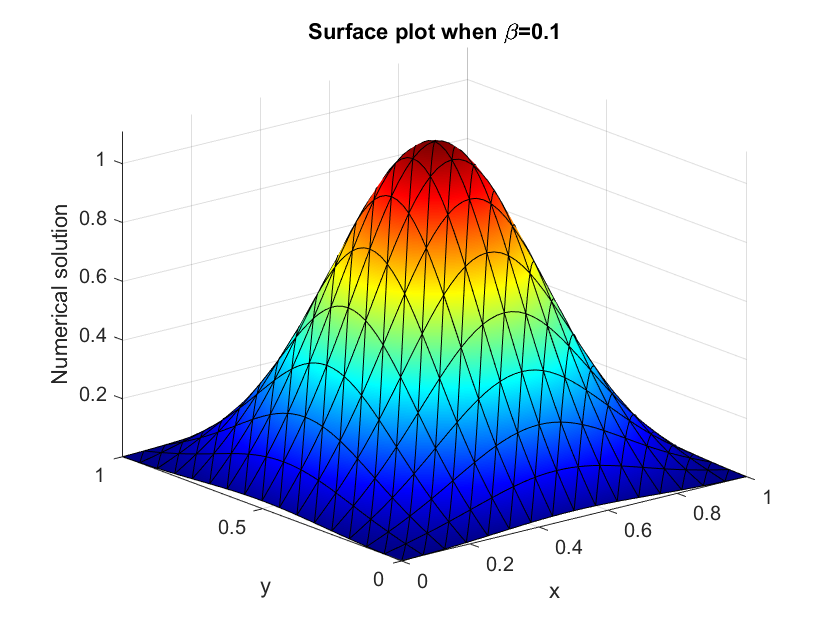}
			\includegraphics[width=0.24\textwidth]{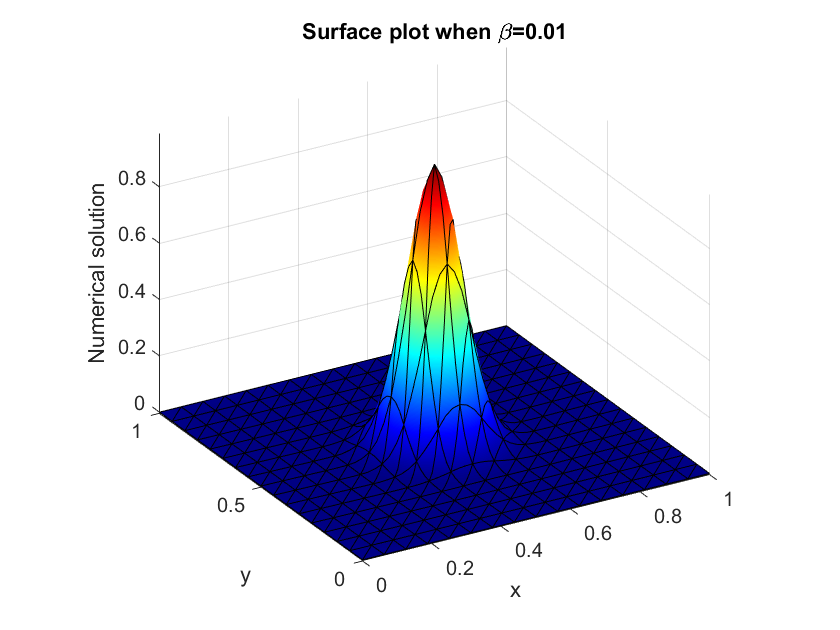}
                          \caption{\Cref{exx2} plots of the numerical solution at time $T=1$ with $h=1/16,k=0.001$ using ${P_2}\times P_1$ elements. Columns moving left to right: $\beta = 1, 0.5, 0.1$, and $0.01$. Rows: Case-I (top) and Case-II (bottom).}\label{figg201}
		\end{figure} 	
	
\end{exm}

\begin{exm}{Homogeneous Rosenau-Burgers in two-dimension)}\label{exx2}
  \rm
		We consider the 2D nonlinear homogeneous Rosenau Burgers-model \eqref{eq1*} having $\alpha=1$ with the following initial condition
		$$u(x,y,0)=exp\bigg(-\frac{(x-0.5)^2}{\beta} -\frac{(y-0.5)^2}{\beta}\bigg), \;\; \;x,y\in\Omega,\nonumber$$ 
                and run two cases (Case I and Case II), where for each case we enforce the $u$ boundary conditions,
                \begin{align*}
                  u(x,y,t) &= u_{I}(x,y,t) = \exp \left(-\frac{(x - 0.5)^2 + (y - 0.5)^2}{\beta}\right), & (x,y,t) &\in \partial \Omega \times (0, T], \\
                  u(x,y,t) &= u_{II}(x,y,t) = e^{-t} u_I(x,y,t), & (x,y,t) &\in \partial \Omega \times (0, T].
                \end{align*}
                We take $\Delta u$ boundary conditions on $(x,y,t) \in \partial \Omega \times (0,T]$ as $\Delta u = \Delta u_I$  and $\Delta u = \Delta u_{II}$ for Case I and II, respectively.
%		\noindent \textbf{Case-I}: 
%		
%		$$u(x,y,t)=\Delta u(x,y,t)=0,\;\;\; \;x,y\in\partial\Omega,\; t\in(0,T].$$
%		
%	    \noindent \textbf{Case-II}:
%		\begin{align}
%                  &u(x,y,t)=u_b(x,y,t) \coloneqq exp\bigg(-t-\frac{(x-0.5)^2}{\beta} -\frac{(y-0.5)^2}{\beta}\bigg),\;\; \;x,y\in\partial\Omega,\; t\in(0,T],\nonumber\\
%		&\Delta u(x,y,t)=\beta\big(\beta(2x-1)^2-2\big)exp\bigg(-t-\frac{(x-0.5)^2}{\beta}-\frac{(y-0.5)^2}{\beta}\bigg)\nonumber\\
%		&+\beta\big(\beta(2y-1)^2-2\big)exp\bigg(-t-\frac{(x-0.5)^2}{\beta}-\frac{(y-0.5)^2}{\beta}\bigg),\;\;\; \;x,y\in\partial\Omega,\; t\in(0,T].\nonumber
%		\end{align}
                The finite element solution with $h=0.001, k=0.01$ and $T=1$ in both cases are shown in \Cref{figg201}.
			%%%%%%%%%%%%%%%%%%%%%%%%%%%%%%%%%%%%%%%%%%%%%%%%%%%%%%%%%%%5
	
		%%%%%%%%%%%%%%%%%%%%%%%%%%%%%%%%%%%%%%%%%%%%
		%%%%%%%%%%%%%%%%%%%%%%%%%%%%%%%%%%%%%%%%%%%%%%%%%%%%%%%%%%%5
%		\begin{figure}[htbp]%\captionsetup{font=scriptsize}
%			\centering
%			\includegraphics[width=5cm, height=5cm]{ex2numericalSurfaceDomain1BETA1}\;
%			\includegraphics[width=5cm, height=5cm]{ex2numericalSurfaceDomain1BETA0.5}\\
%			\includegraphics[width=5cm, height=5cm]{ex2numericalSurfaceDomain1BETA0.1}\;
%			\includegraphics[width=5cm, height=5cm]{ex2numericalSurfaceDomain1BETA0.01}\\
%			\caption{Surface plots of the numerical solutions for different values of $\beta$  at time $T=1$ with $h=1/16,k=0.001$ using ${P_2}\times P_1$ element in example \ref{exx2} (Case-II).} \label{figg202}
%		\end{figure} 
		%%%%%%%%%%%%%%%%%%%%%%%%%%%%%%%%%%%%%%%%%%%%
\end{exm}

%\newpage
\begin{exm}{Non-homogeneous 2D Rosenau-Burgers model with homogeneous boundary conditions}\label{exm2}
  \rm
		 Let us consider the following Non-homogeneous, nonlinear 2D Rosenau-Burgers model:
		\begin{eqnarray}
		u_{t}+\Delta^2u_t - \Delta u+\nabla\cdot \vec{\textbf{1}}u+ \vec{\textbf{1}}u.\nabla u =f(x,y,t)\; \ \mbox{in}\;  [0,1]^2\times(0,T],\;T>0,  \label{eq11} 
		\end{eqnarray} 
		with the exact solution $u(x,y,t)=e^{-t}sin(2\pi x)sin(2\pi y).$ Here, vector $\vec{\textbf{1}}$ represents (1,1).
		The initial data and source term $f$ are chosen to match the exact solution.
		
                With $h = 1/16$, $k = 0.001$, and $T = 1$, \cref{fig:ex3} (top row) shows the computed solution on domains $\Omega_1, \Omega_3$, and $\Omega_4$. The middle row in \cref{fig:ex3} shows pointwise error profiles. Finally, the bottom row of \cref{fig:ex3} shows $L^2$, $H^1$, and $H^2$ convergence rates, which match the theoretical rates of \cref{thmerror.,thmBE*}.
%                demonstrates the surface plot and error profile with $k=0.001$, $h=1/16$ and $T=1$ on the rectangular domain $\Omega_1$, $\Omega_3$ and $\Omega_4$. Here, the numerical solution is found to be approximately similar to the analytical solution with the error of order O$(h^3)$ in $L^2$ norm. The Log-Log plot \eqref{figg302} shows the history of error in $L^{2}$ norm for finite elements ${P_2}\times P_1$ at time $T=1$ using the Backward Euler scheme. Finally, $H^1$ and $H^2$ errors and convergence rates of the developed algorithm at time  $T=1$ are shown in Tables \eqref{tb301}. It can be observed that the table supports the theory developed in Theorems \ref{thmerror.} and \ref{thmBE*}.

                \begin{figure}[htpb]
                  \centering
                  \begin{subfigure}{0.32\linewidth}
                    \centering
                    \includegraphics[width=\textwidth]{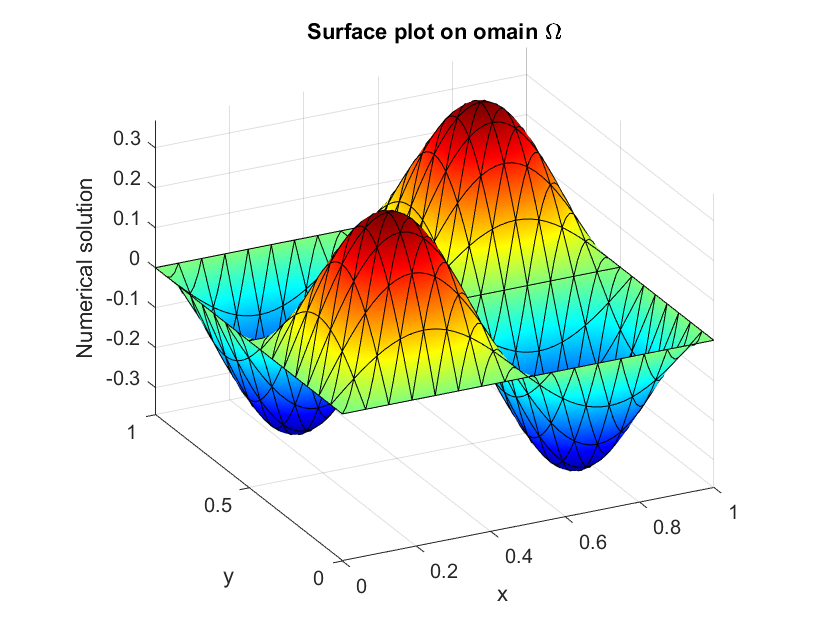}
                  \end{subfigure}
                  \hfill
                  \begin{subfigure}{0.32\linewidth}
                    \centering
                    \includegraphics[width=\textwidth]{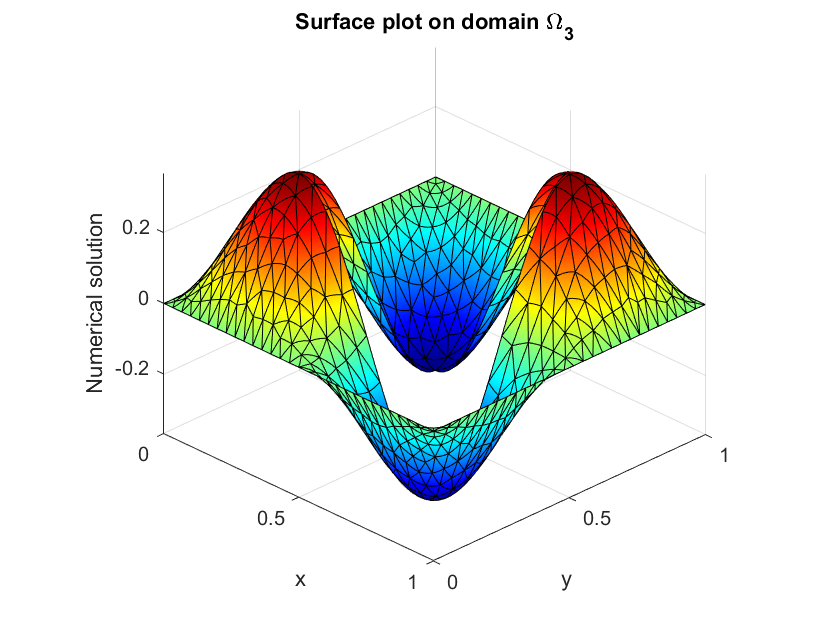}
                  \end{subfigure}
                  \hfill
                  \begin{subfigure}{0.32\linewidth}
                    \centering
                    \includegraphics[width=\textwidth]{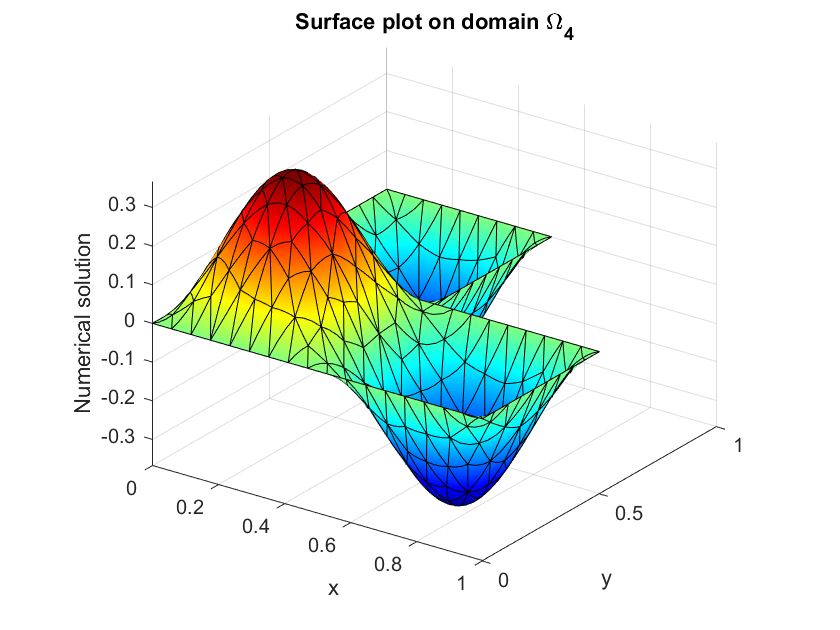}
                  \end{subfigure}
                  
                  \bigskip
                  \begin{subfigure}{0.32\linewidth}
                    \centering
                    \includegraphics[width=\textwidth]{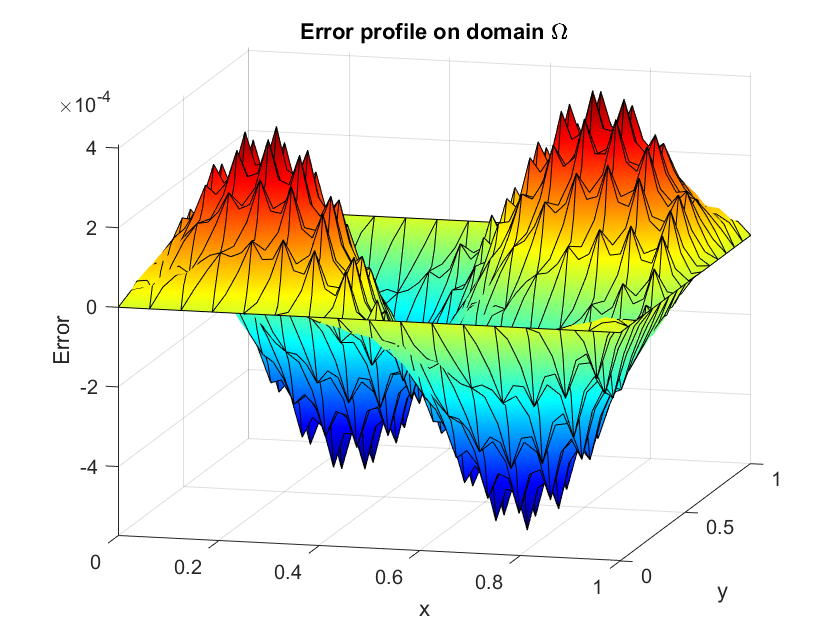}
                  \end{subfigure}
                  \hfill
                  \begin{subfigure}{0.32\linewidth}
                    \centering
                    \includegraphics[width=\textwidth]{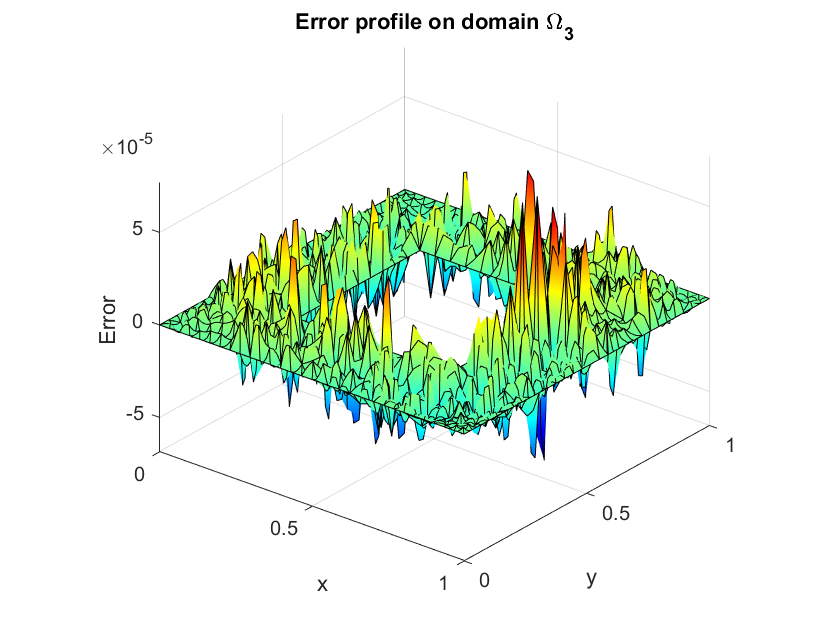}
                  \end{subfigure}
                  \hfill
                  \begin{subfigure}{0.32\linewidth}
                    \centering
                    \includegraphics[width=\textwidth]{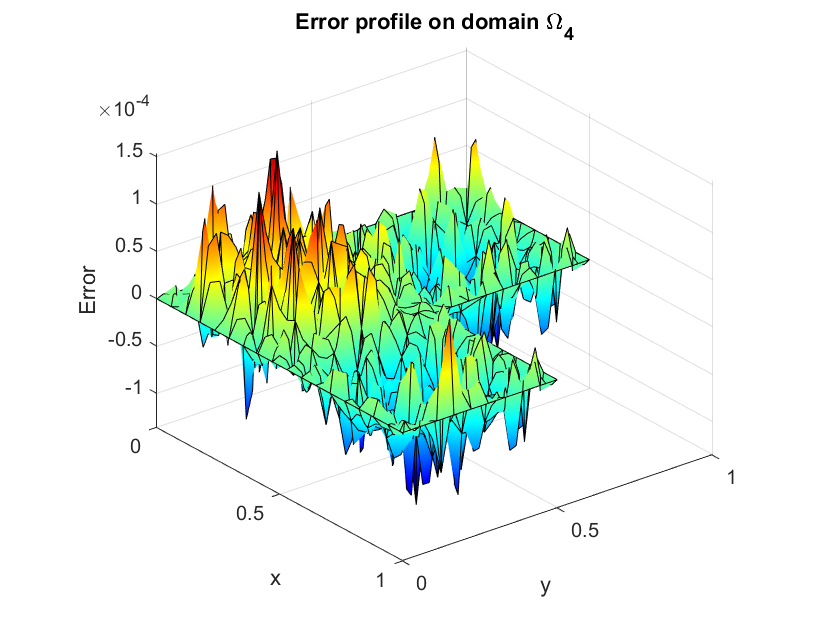}
                  \end{subfigure}
                  
                  \bigskip
                  \begin{subfigure}[c]{0.42\linewidth}
                    \centering
                    \includegraphics[width=\textwidth]{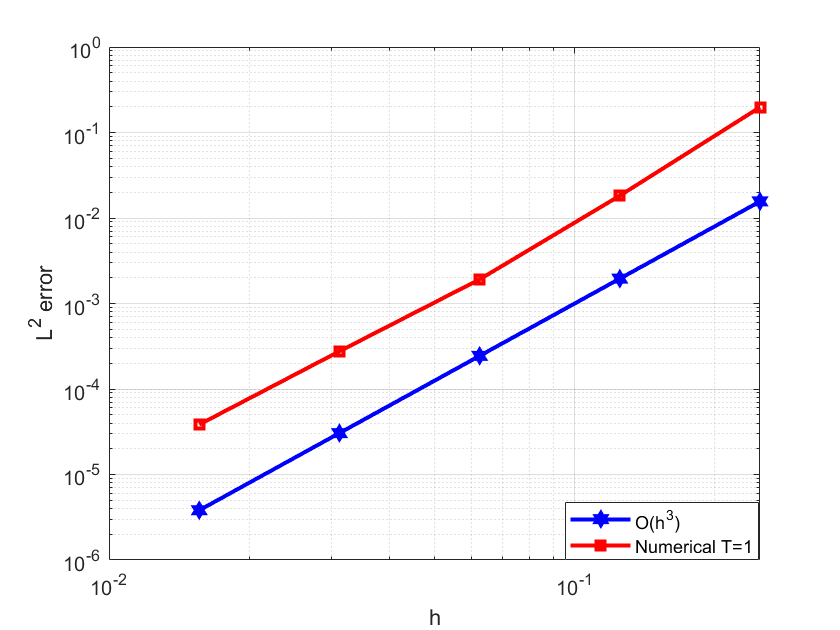}
                  \end{subfigure}
                  \hfill
                  \begin{subtable}{0.56\linewidth}
			\centering
                        \resizebox{\textwidth}{!}{
			\begin{tabular}{c c c a a }
				$ {h} $ &   $\|e(t)\|_{H^{1}}$&$C^{1}-\mbox{order}$ &  $\|e(t)\|_{H^{2}}$&$C^{1}-\mbox{order}$ \\\toprule
				\hline
				$1/32$ & ${6.1970 \times 10^{-3}}$ & $-$& ${1.0052 \times 10^{0}}$ & ${ -}$ \\
				$1/64$ & ${1.5263 \times 10^{-3}}$ & ${2.0215}$& ${5.1285 \times 10^{-1}}$ & ${0.9709}$ \\
				$1/128$ & ${3.7873 \times 10^{-4}}$ & ${2.0108}$& ${2.5901 \times 10^{-1}}$ & ${0.9855}$ \\
				$1/256$ & ${9.4919 \times 10^{-5}}$ & ${ 1.9964}$& ${1.2886 \times 10^{-1}}$ & ${1.0072}$ 
			\end{tabular}
                      }
                  \end{subtable}
                  \caption{\Cref{exm2} with ${P_2}\times P_1$ elements and $k = 0.001$, $T = 1$: Surface (top) and error profiles (middle) of the numerical solutions on domains $\Omega_1$, $\Omega_3$, and $\Omega_4$ with $h=1/16$. Bottom left: $L^2$ errors showing a numerical convergence rate of $h^3$ (red) relative to ideal $h^3$ convergence (blue). Bottom right: Computed orders of convergence in the $H^1$ and $H^2$ norms.}\label{fig:ex3}
                \end{figure}
\end{exm}

%\newpage
\begin{exm}{Non-homogeneous 2D Rosenau-Burgers model with non-homogeneous boundary conditions}\label{exm4}
  \rm 
		Next, we consider the nonlinear, non-homogeneous  2D Rosenau-Burgers model \eqref{eq11} on the domain $\Omega$ corresponding to the exact solution,
		\begin{align}
                  u_e(x,y,t) = exp(2x+2y+2t).\nonumber
		\end{align}
                with $f$ chosen so that $u_e$ is the solution. The boundary conditions for $u$ and $\Delta u$ for $(x,y,t) \in \partial \Omega \times (0, T]$ are set according to $u_e$, as are the initial conditions for $u$ at $t = 0$.
%                with boundary conditions 
%		\begin{align}
%		&u(x,y,t)=exp(2x+2y+2t),\;\; \;x,y\in\partial\Omega,\; t\in(0,T],\nonumber\\
%		&\Delta u(x,y,t)=8exp(2x+2y+2t),\;\; \;x,y\in\partial\Omega,\; t\in(0,T],\nonumber
%		\end{align}
%		\vspace{-0.1cm}
%		initial condition
%		\begin{align}
%		u(x,y,0)=exp(2x+2y), \;\; \;x,y\in\Omega,\nonumber
%		\end{align}
%		and the exact solution
%		\begin{align}
%		u(x,y,t)=exp(2x+2y+2t).\nonumber
%		\end{align}
%		 The selection of the above exact solution determines the source term $f$.
                We use $P_2 \times P_1$ finite elements to compute solutions with $k = 0.001$ up to $T=1$. In \cref{figg105}, top row, we plot numerically computed solutions over domains $\Omega_2, \Omega_5$, and $\Omega_6$ with $h = 1/16$. The middle row of \cref{figg105} shows the corresponding error profiles. The bottom row (left) plots $L^2$ errors, the bottom row (middle) tabulates $H^1$ errors and CPU times for computing the solution, and the bottom row (right) plots $H^2$ errors. All convergence rates are in accordance with the theory provided by \cref{thmerror.,thmBE*}.

			\begin{figure}[htbp]%\captionsetup{font=scriptsize}
                                \centering
                                \begin{subfigure}{0.32\linewidth}
                                  \centering
                                  \includegraphics[width=\textwidth]{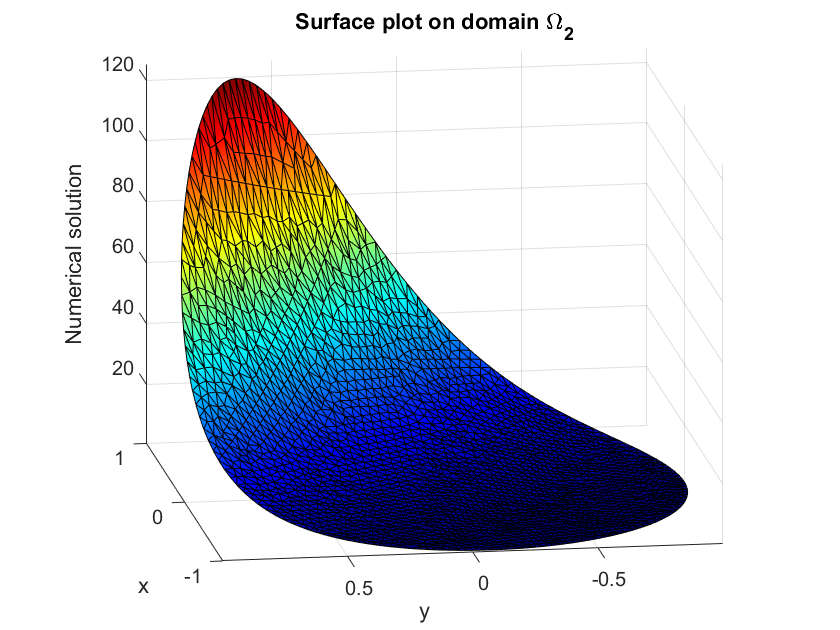}
                                \end{subfigure}
                                \hfill
                                \begin{subfigure}{0.32\linewidth}
                                  \centering
                                  \includegraphics[width=\textwidth]{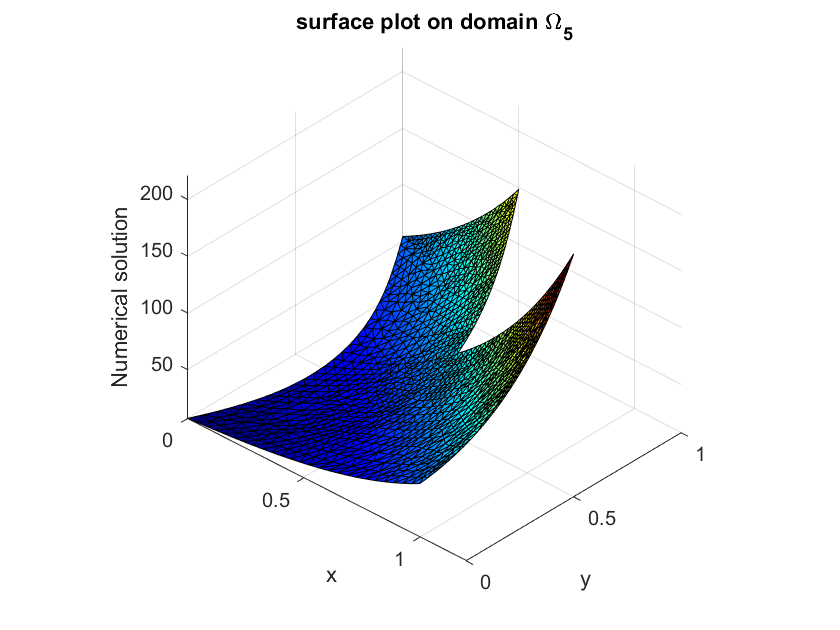}
                                \end{subfigure}
                                \hfill
                                \begin{subfigure}{0.32\linewidth}
                                  \centering
                                  \includegraphics[width=\textwidth]{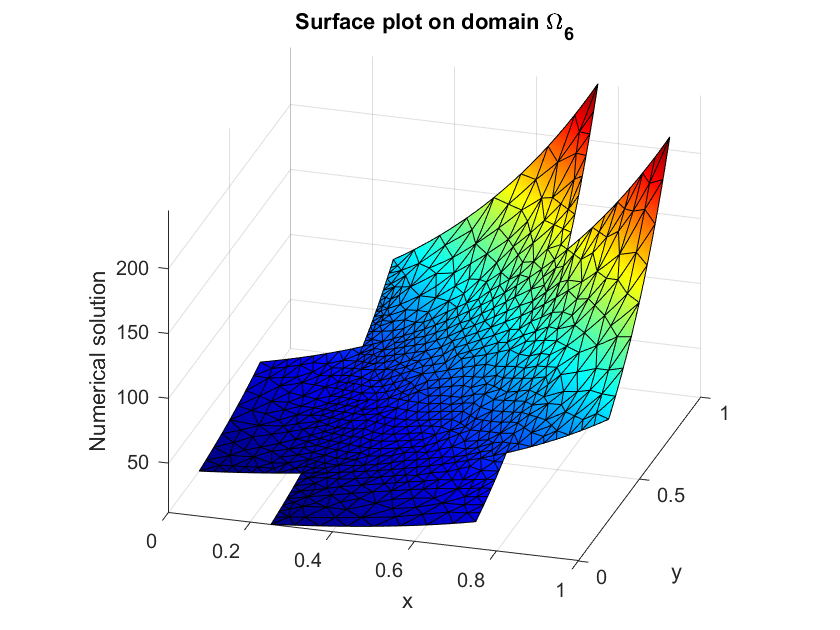}
                                \end{subfigure}
                                
                                \bigskip
                                \begin{subfigure}{0.32\linewidth}
                                  \centering
                                  \includegraphics[width=\textwidth]{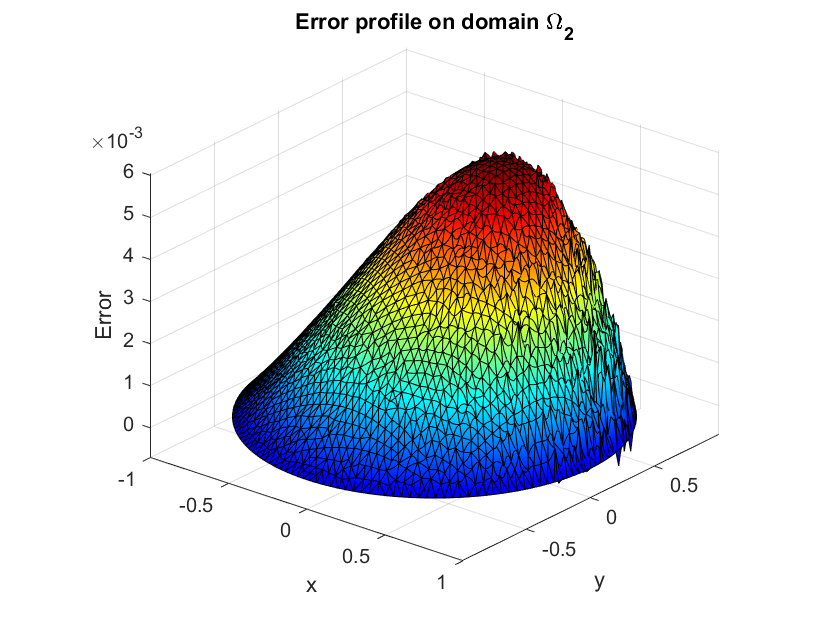}
                                \end{subfigure}
                                \hfill
                                \begin{subfigure}{0.32\linewidth}
                                  \centering
                                  \includegraphics[width=\textwidth]{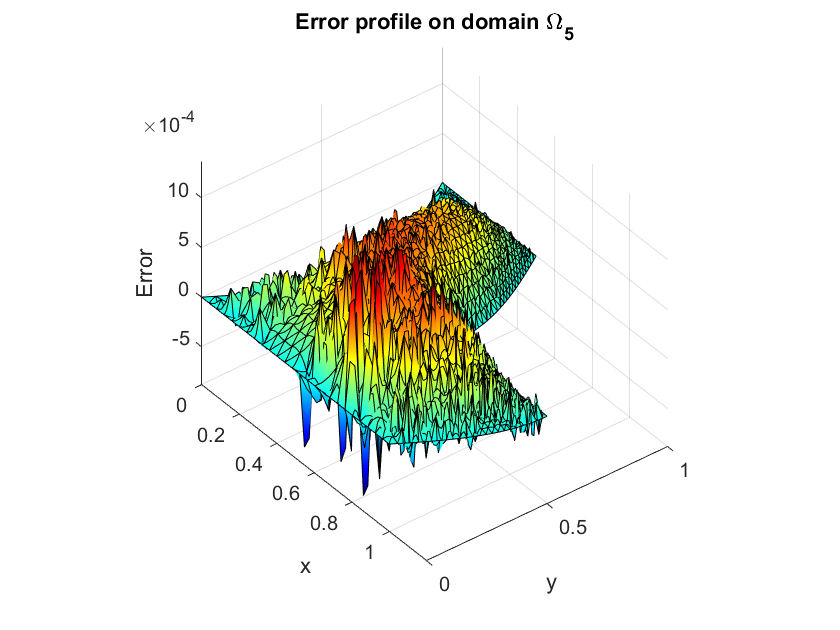}
                                \end{subfigure}
                                \hfill
                                \begin{subfigure}{0.32\linewidth}
                                  \centering
                                  \includegraphics[width=\textwidth]{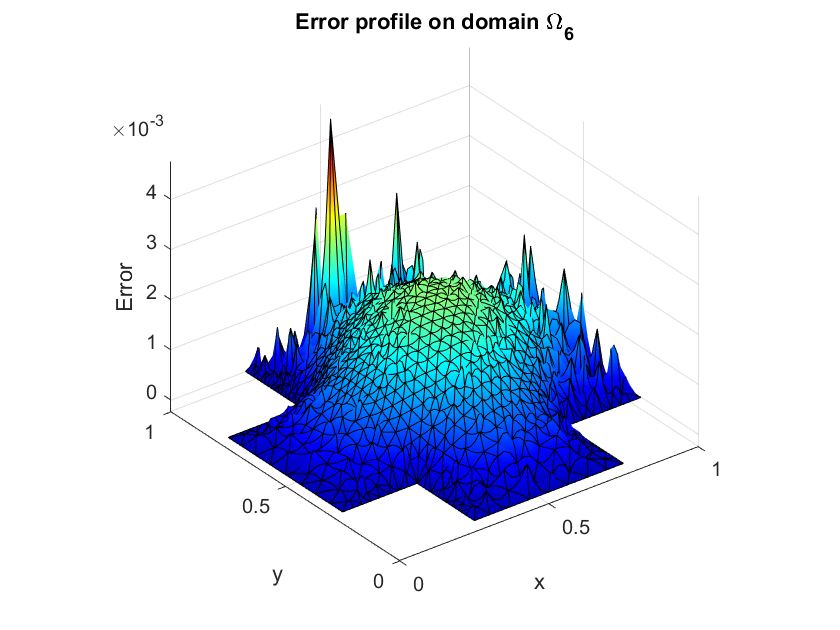}
                                \end{subfigure}
                                \hfill

                                \bigskip
                                \begin{subfigure}[c]{0.32\linewidth}
                                  \centering
                                  \includegraphics[width=\textwidth]{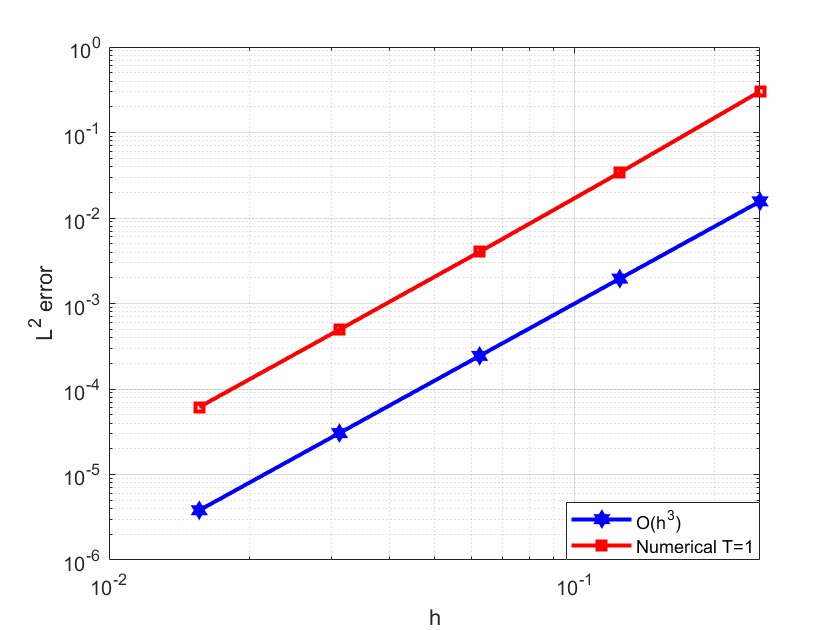}
                                \end{subfigure}
                                \hfill
                                \begin{subtable}{0.32\linewidth}
                                  \centering
                                  \resizebox{\textwidth}{!}{
                                  \begin{tabular}{c c cc   }
                                          $ h$ &   $\|e(t)\|_{H^{1}}$&$C^{1}-\mbox{order}$ &  CPU time \\\toprule
                                          $1/16$ & ${5.8685 \times 10^{-1}}$ & $-$ &$139.72$ \\
                                          $1/32$ & ${1.4140 \times 10^{-1}}$ & $2.0532$ &$163.59$ \\
                                          $1/64$ & ${3.4572 \times 10^{-2}}$ & $2.0321$ &$194.81$ \\
                                          $1/128$ & ${8.5153 \times 10^{-3}}$ & $2.0215$ &$218.12$
                                  \end{tabular}
                                }
                                \end{subtable}
                                \hfill
                                \begin{subfigure}[c]{0.32\linewidth}
                                  \centering
                                  \includegraphics[width=\textwidth]{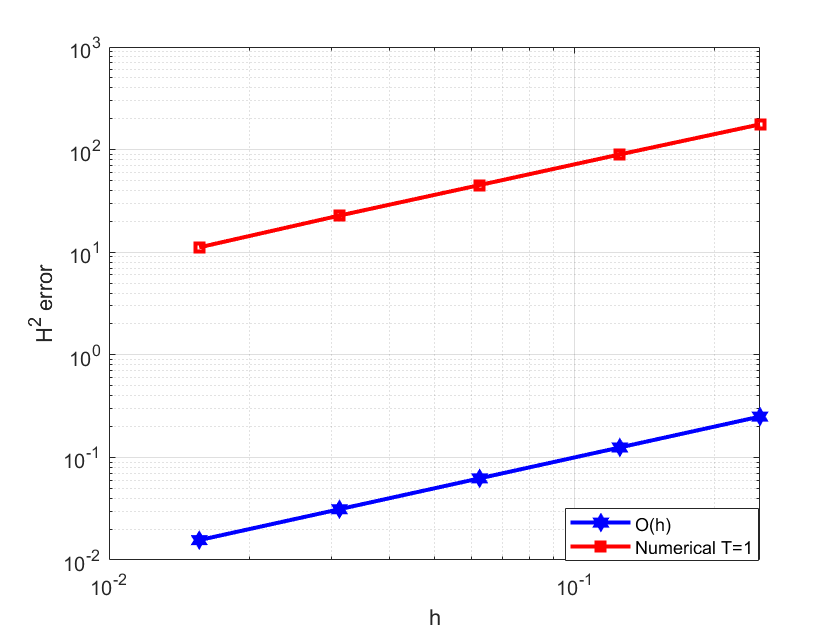}
                                \end{subfigure}
                                \caption{\Cref{exm4} with ${P_2}\times P_1$ elements and $k = 0.001$, $T = 1$: Surface (top) and error profiles (middle) of the numerical solutions on domains $\Omega_2$, $\Omega_5$, and $\Omega_6$ with $h=1/16$. Bottom left: $L^2$ errors showing a numerical convergence rate of $h^3$ (red) relative to ideal $h^3$ convergence (blue). Bottom middle: $H^1$ errors showing a numerical convergence rate of $h^2$ along with a tabulation CPU times required to compute the solutions. Bottom right: $H^2$ errors showing a numerical convergence rate of $h$ (red) relative to ideal $h$ convergence (blue).} \label{figg105}
			\end{figure}

\end{exm}
\section{Conclusion}
We have analyzed a new conformal finite element method using both standard and mixed variational formulations of the nonlinear Rosenau–Burgers-biharmonic model. 
%The computational difficulties due to the construction of variational formulation in the standard approach have been removed using the mixed finite element method. 
Our existence and uniqueness results show that the mixed approach requires less regularity of the initial condition. %Numerical experiments show that results hold true for non-homogeneous models as well. 
Our work involves anaylsis for non-homogeneous boundary conditions and error estimates in $H^1$ B\^{o}chner spaces.
%the extension of the theoretical analysis for non-homogenous boundary conditions and it also involves deriving the error estimates in $H^1$ B\^{o}chner space.

\section*{Acknowledgments:} Ankur has been supported by the University Grant Commission India through ID. No.
JUNE18-416131; Ram Jiwari has been supported by the National Board of Higher Mathematics (NBHM), India through grant No. 02011/3/2021NBHM(R.P)/R\&D II/6974;  A. Narayan was partially supported by NSF DMS-1848508.
This material is based upon work supported by both the National Science Foundation under Grant No. DMS-1439786 and the Simons Foundation Institute Grant Award
ID 507536 while A. Narayan was in residence at the Institute for Computational and
Experimental Research in Mathematics in Providence, RI, during the Spring 2020 semester.

%\textbf{Conflict of interest:} The authors declare that they have no conflict of interest.
%\vspace{1cm}
%\bibliographystyle{abbrv}
%\bibliography{Ref}

%\section*{Acknowledgments}
%We would like to acknowledge the assistance of volunteers in putting
%together this example manuscript and supplement.

\appendix 
\section{Proof of \Cref{lemma:2.2}}\label{sec:lemma2.2-proof}
We seek to prove that $\|u\|_2$ is an equivalent norm to $\|u\| + \|\Delta u\|$. To begin, we require the following well-known inequality.

\begin{lemma}{\bf (Nirenberg inequality \cite{nirenberg1959elliptic}):} 
For $\frac{i}{j}\leq b\leq 1$ with $i\leq j$ and $\frac{1}{p}=\frac{i}{n}+b\big(\frac{1}{r}-\frac{j}{n}\big)+(1-b)\frac{1}{q}$, following estimate holds:
	\begin{equation}
\|D^i u\|_{L^p(\Omega)} \leq C\|D^j u\|_{L^r(\Omega)}^b\|u\|_{L^q(\Omega)}^{1-b},
	\end{equation} 
	where $\Omega$ is the bounded domain in $\mathbb{R}^n$ and the constant C depends upon the domain $\Omega$ as well as $i, n, j, q, r$ and $b$.
\end{lemma}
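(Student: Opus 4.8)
The plan is to reduce everything to the whole space $\R^n$ and to build the general estimate up by induction on the derivative order $j$, with the exponent $b$ dictated by scaling. First I would record the scaling heuristic that forces the exponent relation: under the dilation $u_\lambda(x)=u(\lambda x)$, each of the three quantities $\|D^i u\|_{L^p}$, $\|D^j u\|_{L^r}$, $\|u\|_{L^q}$ scales as an explicit power of $\lambda$, and requiring the two sides of the claimed inequality to scale identically yields precisely the stated identity $\frac{1}{p}=\frac{i}{n}+b\big(\frac{1}{r}-\frac{j}{n}\big)+(1-b)\frac{1}{q}$. This shows the relation is necessary and guides the constant-tracking, but does not by itself prove the bound.

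The analytic core is the first-order case $j=1$, $i=0$. I would begin from the Gagliardo–Nirenberg–Sobolev lemma on $\R^n$,
$$\|v\|_{L^{n/(n-1)}(\R^n)} \leq \prod_{k=1}^n \|\partial_k v\|_{L^1(\R^n)}^{1/n},$$
which is proved elementarily by writing $v(x)$ as an integral of $\partial_k v$ along the $k$-th coordinate axis (fundamental theorem of calculus) and applying the generalized H\"older inequality one coordinate at a time. Applying this to $v=|u|^{\gamma}$ for a suitable power $\gamma>1$, using $\nabla|u|^{\gamma}=\gamma|u|^{\gamma-1}\mathrm{sgn}(u)\,\nabla u$, and then splitting the resulting right-hand side by H\"older between $L^r$ (for $\nabla u$) and $L^q$ (for $u$), produces the weighted interpolation inequality $\|u\|_{L^p(\R^n)}\leq C\|\nabla u\|_{L^r(\R^n)}^b\|u\|_{L^q(\R^n)}^{1-b}$, where the choice of $\gamma$ and of the H\"older exponents is forced by the scaling identity above.

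With the first-order case established, I would obtain the general $(i,j)$ statement by induction on $j$. The key mechanism is that an intermediate derivative $D^i u$ with $0<i<j$ can be interpolated between the top derivative $D^j u$ and $u$ itself by applying the first-order inequality to $D^{i-1}u$ (and iterating), then composing the resulting estimates, with all exponents again pinned by scaling. Finally I would transfer the inequality from $\R^n$ to the bounded Lipschitz domain $\Omega$ using a bounded Sobolev extension operator $E:W^{j,r}(\Omega)\to W^{j,r}(\R^n)$, which exists precisely because $\Omega$ is Lipschitz: apply the whole-space inequality to $Eu$ and restrict back to $\Omega$, whereupon the constant $C$ and the lower-order factor acquire their dependence on $\Omega$.

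The main obstacle will be the borderline and exceptional cases rather than the generic argument. The hypothesis permits $b$ to range in the interval $\frac{i}{j}\leq b\leq 1$, and at the endpoint $b=1$ there are exceptional configurations (when $j-i-n/r$ is a nonnegative integer with $r>1$) in which the multiplicative estimate fails and one must exclude $b=1$, restricting to $\frac{i}{j}\leq b<1$, exactly as treated in Nirenberg's original work. The naive H\"older splitting in the first-order step likewise breaks down at the critical Sobolev exponent and must be replaced by a more careful argument there. Managing these endpoints, together with checking that the extension step preserves the homogeneous, scaling-balanced structure of the estimate, is where the real work lies; the generic interior case reduces to bookkeeping once the first-order lemma and the scaling relation are in hand.
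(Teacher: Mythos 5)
The first thing to note is that the paper does not prove this lemma at all: it is quoted from Nirenberg's 1959 paper as a known result, and is used only through the single special case $p=q=r=2$, $i=1$, $j=2$, $b=\tfrac12$, applied to functions in $H_0^2(\Omega)$. So your proposal must stand on its own merits. The whole-space portion of your outline is the standard and correct route: the scaling argument pins the exponent relation, the Gagliardo--Nirenberg--Sobolev lemma follows from the fundamental theorem of calculus plus generalized H\"older, the substitution $v=|u|^{\gamma}$ gives the first-order interpolation inequality, and induction on $j$ handles intermediate derivatives. Your caveat about the exceptional cases at $b=1$ is also accurate.

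The genuine gap is the final transfer step to the bounded domain. A bounded extension operator $E:W^{j,r}(\Omega)\to W^{j,r}(\R^n)$ controls $\|Eu\|_{W^{j,r}(\R^n)}$ by the \emph{full} norm $\|u\|_{W^{j,r}(\Omega)}$; it does not control the seminorm $\|D^j Eu\|_{L^r(\R^n)}$ by $\|D^j u\|_{L^r(\Omega)}$ alone. Applying the whole-space inequality to $Eu$ and restricting therefore yields only $\|D^i u\|_{L^p(\Omega)}\leq C\|u\|_{W^{j,r}(\Omega)}^{b}\|u\|_{L^q(\Omega)}^{1-b}$, i.e.\ an estimate with a lower-order term hidden in the first factor. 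This is not a removable technicality: the pure product form stated in the lemma is actually \emph{false} on a bounded domain without side conditions. Take $u(x)=x_1$ with $i=1$, $j=2$ (so the hypothesis forces $b\geq\tfrac12>0$): the left side is positive while $D^2u\equiv 0$ makes the right side vanish; constants likewise destroy the case $i=0$ for any $b>0$. Nirenberg's own theorem on domains accordingly carries an additive term of the form $+C\|u\|_{L^s(\Omega)}$, or else restricts to functions with a normalization (such as vanishing boundary values), and no extension argument can eliminate it. For the one instance the paper actually needs --- $u\in H_0^2(\Omega)$ with $p=q=r=2$, $i=1$, $j=2$, $b=\tfrac12$ --- the clean product inequality does hold, but the direct proof is integration by parts, $\|\nabla u\|^2=-(u,\Delta u)\leq\|u\|\,\|\Delta u\|$, which requires none of the machinery you set up.
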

	
In particular, for $p=q=r=2, i=1, j=2$ and $b=\frac{1}{2}$, we have 
\begin{equation}
\|\nabla u\|\leq \|\Delta u\|^{\frac{1}{2}}\|u\|^{\frac{1}{2}}, \quad \forall u \in  H_0^{2}(\Omega).
\end{equation}
Finally, using Young’s inequality, we obtain
\begin{equation}
\|\nabla u\|^2\leq {\frac{1}{2}}\|\Delta u\|^{2}+{\frac{1}{2}}\|u\|^{2}, \quad \forall u \in  H_0^{2}(\Omega).\;\;\Box\label{nir}
\end{equation}

We can now provide the desired proof.
\begin{proof}[Proof of \Cref{lemma:2.2}]
As $\|u\|_2= \sqrt{\|u\|^2+\|\nabla u\|^2+\|\Delta u\|^2 }$, it is obvious that $\|u\| + \|\Delta u\|\leq C \|u\|_2$.

\noindent Vice versa, exploiting the equation \eqref{nir},
$$ \frac{1}{2}\|u\|_2^2\leq  \frac{1}{2}\|u\|^2+\|\nabla u\|^2+\frac{1}{2}\|\Delta u\|^2\leq \|u\|^2+\|\Delta u\|^2 \leq (\|u\|+\|\Delta u\|)^2. $$
From which we can easily deduce that both are equivalent.
\end{proof}

\section{Nonlinear system obtained in Theorem \ref{exist1}}\label{eq55.}
\begin{proof}
We seek to find a system of ODE in Theorem \ref{exist1}. Equation (\ref{eq30.}) reduces the equation (\ref{eq5.}) as follows:
\begin{subequations}
	\begin{align}
	AX'(t)+BY'(t)+\alpha AY(t)&=G(X(t)),  \label{eq55.a.}\\
	BX(t)&=AY(t),  \label{eq55.a..}\\
	X(0)&=X_0, \label{eq55.b.} 
	\end{align}\label{eq55..}
\end{subequations}
where $$A=[A_{i,j}]_{m\times m},\;A_{i,j}=(\phi_i,\phi_j),\; i,j=1,2 \ldots m,$$
$$B=[B_{i,j}]_{m\times m},\;B_{i,j}=(\nabla\phi_i,\nabla\phi_j),\; i,j=1,2 \ldots m,$$
$$ X=(d_m^1,d_m^2\ldots d_m^m)^T, Y=(b_m^1,b_m^2\ldots b_m^m)^T,$$ $$X_0=(d_m^1(0),d_m^2(0)\ldots d_m^m(0))^T
\;\mbox{and}\;G(X)=(G_j) \;\mbox{with}\; G_j= -( g(u^m), \nabla \phi_j).$$
As matrices $A$ and $B$ are symmetric and positive definite, hence inverse exists for both. Thus, we get the following system of ODE
	\begin{subequations}
	\begin{align}
	X'(t)&=\big[[I+(A^{-1}B)^2]^{-1}A^{-1}\big]\big(G(X(t))-\alpha BX(t)\big),  \label{eq55.a}\\
	X(0)&=X_0. \label{eq55.b} 
	\end{align}
\end{subequations}
\end{proof}

\section{Well-posedness of mixed variational problem}\label{existmixed}
\vspace{-0.2cm}
In this section, we shall prove the existence and uniqueness of the solution to the mixed weak formulation (\ref{eq5}). Before proving the existence and uniqueness, let us derive the stability estimate, which is beneficial to know the long-term behavior of the solution to a time-dependent equation. 
\begin{lemma}[Stability estimate] \label{thm1.}
	Let $(u,p)$ be the solution of \eqref{eq5} and suppose $(u_{0},p_{0}) \in L^2(\Omega)\times L^2(\Omega) $, then the following property holds:
	\begin{equation}
	\|u(t)\|_{1}  \leq  C(\|u_0\|,\|p_0\|)  ,  \;\;t \in(0,T]. \label{st}
	\end{equation} 
	Moreover, there exists a constant $C_1 > 0$ such that
	\begin{equation}
	\|u\|_{L^{\infty}(I; L^{q}(\Omega))}  \leq  C_1(q, \|u_0\|,\|p_0\|),\quad \;\forall\; 1\leq q<\infty.  
	\end{equation}
\end{lemma}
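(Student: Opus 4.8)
The plan is to derive an energy identity by testing the two equations of \eqref{eq5} against carefully chosen functions, mirroring the primal argument of \cref{thm1} but exploiting the coupling between $u$ and $p$. First I would set $\chi = u$ in \eqref{eq5a}; exactly as in \eqref{eqn7}, the nonlinear term vanishes, $(\nabla\cdot g(u), u) = 0$, leaving $(u_t, u) + (\nabla p_t, \nabla u) + \alpha(p, u) = 0$. The crucial step is to rewrite the two remaining coupling terms using the constraint \eqref{eq5b}. Testing \eqref{eq5b} with $\chi' = p_t$ (admissible formally, since the boundary condition $p|_{\partial\Omega} = 0$ propagates to $p_t$) gives $(\nabla u, \nabla p_t) = (p, p_t) = \tfrac12 \tfrac{\mathrm d}{\mathrm dt}\|p\|^2$, while testing \eqref{eq5b} with $\chi' = u$ yields the pointwise algebraic identity $(p, u) = \|\nabla u\|^2 \ge 0$.

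Substituting these into the tested first equation produces the clean energy identity
\[
\tfrac12 \tfrac{\mathrm d}{\mathrm dt}\big[\|u\|^2 + \|p\|^2\big] + \alpha\|\nabla u\|^2 = 0,
\]
and integrating from $0$ to $t$ (using $\alpha > 0$) gives $\|u(t)\|^2 + \|p(t)\|^2 \le \|u_0\|^2 + \|p_0\|^2$. This controls $\|u(t)\|$ and $\|p(t)\|$ but supplies only $L^2$-in-time control of $\|\nabla u\|$; the pointwise-in-time bound on $\|\nabla u(t)\|$ demanded by \eqref{st} must come from elsewhere. \textbf{This is the main obstacle}: recovering $\|\nabla u(t)\|$ at each fixed $t$ rather than in an integrated sense. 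I would resolve it by reusing the algebraic identity $\|\nabla u(t)\|^2 = (p(t), u(t))$ from \eqref{eq5b} and bounding its right-hand side via Cauchy--Schwarz and Young, $\|\nabla u(t)\|^2 \le \|p(t)\|\,\|u(t)\| \le \tfrac12(\|u_0\|^2 + \|p_0\|^2)$. Combining with the energy bound gives $\|u(t)\|_1^2 = \|u(t)\|^2 + \|\nabla u(t)\|^2 \le \tfrac32(\|u_0\|^2 + \|p_0\|^2)$, which is \eqref{st}.

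For the second assertion, I would invoke the Sobolev embedding $H_0^1(\Omega) \hookrightarrow L^q(\Omega)$, valid for every finite $q$ because $n \le 2$, to obtain $\|u(t)\|_{L^q} \le C_q\|u(t)\|_1 \le C_q\,C(\|u_0\|, \|p_0\|)$ uniformly in $t$; taking the essential supremum over $t \in I$ then yields the claimed $L^\infty(I; L^q)$ bound. As with \cref{thm1}, this is an \emph{a priori} estimate carried out formally on a solution assumed sufficiently regular; its rigorous justification proceeds through the Galerkin approximation of \cref{existmixed}, where the $m$-independence of the right-hand side above is exactly the property needed to pass to the limit.
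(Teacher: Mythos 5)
Your proof is correct and follows essentially the same route as the paper's: test \eqref{eq5a} with $\chi = u$, test \eqref{eq5b} with $\chi' = p_t$ and $\chi' = u$ to obtain the energy identity $\tfrac12\tfrac{\mathrm{d}}{\mathrm{d}t}[\|u\|^2+\|p\|^2]+\alpha\|\nabla u\|^2 = 0$, integrate, recover the pointwise-in-time gradient bound from the constraint identity $\|\nabla u\|^2 = (p,u)$ via Cauchy--Schwarz and Young, and conclude with the Sobolev embedding $H^1(\Omega)\hookrightarrow L^q(\Omega)$. Your version is in fact slightly cleaner than the paper's, which states the intermediate inequality with mismatched powers ($\|u(t)\|_1 \le \tfrac12\|u\|^2+\tfrac12\|p\|^2$) where you correctly keep everything squared.
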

\begin{proof}  Setting $\chi = u$ in the equation \eqref{eq5a} and $\chi' = u,p_t$ in the equation \eqref{eq5b}, we obtained
\begin{equation}	
\dfrac{1}{2} \dfrac{\mathrm{d}}{\mathrm{d}t}\Big[\|u\|^{2} + \| p\|^{2}\Big]  +   \alpha \|\nabla u\|^{2}  = \int_{\Omega} (\nabla\cdot g(u))ud\Omega .\label{eqn5.}
\end{equation}
%Now, let us define 
%$ G(s):= \int_{0}^{s} g(\omega)d\omega$  with $G^{'}(s) =g(s),\;\; s \in \mathbb{R}$
%such that
%\begin{equation}
%\nabla\cdot G(v)= g(v)\cdot\nabla v. \label{eqn6.}
%\end{equation}
%Now, by applying divergence theorem \cite{quarteroni2009numerical} with $G(0)=0$ and $v=0$ on $\partial\Omega$, we get 
%\begin{equation}
%\int_{\Omega} (\nabla\cdot g(v))v d\Omega = - \int_{\Omega} g(v)\cdot \nabla v d\Omega = - \int_{\Omega} \nabla\cdot G(v) = 0. \label{eqn7.}
%\end{equation}
Using equations \eqref{eqn7} in \eqref{eqn5.} with a positive parameter $\alpha$, we get
$$ \dfrac{\mathrm{d}}{\mathrm{d}t}\Big[\|u\|^{2} + \| p\|^{2}\Big]\leq 0 . $$
We integrate the above inequality w.r.t.  time $t$ to get 
\begin{equation}
\|u\|^{2} + \|p\|^{2} \leq \|u_{0}\|^2+\|p_{0}\|^2.\label{123}
\end{equation}
We can see that using Cauchy Schwartz and Young's inequalities in equation \eqref{eq5b} with $\chi' = u$ gives rise to 
\begin{equation}
\|u(t)\|_{1}\leq \frac{1}{2}\|u\|^{2} + \frac{1}{2}\|p\|^{2}.\label{ce}
\end{equation}
Next, using equations \eqref{123} and \eqref{ce}, we get
\begin{equation}
\|u(t)\|_{1}  \leq  C(\|u_0\|,\|p_0\|)   ,  \;\;\forall \; t \in(0,T].\label{ce...}
\end{equation} 
Finally, we use the fact that $H^{1}(\Omega)$ is embedded in $L^{q}(\Omega)$ for $1\leq q<\infty$  (cf. Sobolev’s Inequality, \cite{quarteroni2009numerical}) in the above lemma to complete the rest of the proof.
\end{proof}

We can now give the proof of \cref{exist1}.
\begin{proof}[Proof of \cref{exist1}]
We shall prove the theorem in the following six parts:
\vspace{-0.2cm}
\begin{enumerate}
	\item \textit{Compression of weak problem:} Let us consider $\{\phi_{j}\}^{\infty}_{j=1}$ to be the orthogonal basis of $H_{0}^{1}(\Omega)$ and suppose  $\digamma^{m}$ = span$\{\phi_{1},\phi_{2},\ldots,\phi_{m}\}$. Now, let us assign the functions $u^m,p^m:[0,T]\to H^1_0(\Omega)$ of the form
	\vspace{-0.2cm}
	\begin{equation}
	u^{m}(t) = \sum_{j=1}^{m} d_m^j(t)\phi_{j}\;\quad\mbox{and}\; \quad p^{m}(t) = \sum_{j=1}^{m} b_m^j(t)\phi_{j} \label{eq30.}
	\end{equation}	
	as a solution of 
	\begin{subequations}
		\begin{align}
		(u_{t}^m(t),\phi_j)+(\nabla p^m_{t}(t),\nabla \phi_j)+\alpha(p^m(t), \phi_j)&=-( g(u^m), \nabla \phi_j),  \label{eq5.a}\\
		(\nabla u^m(t),\nabla \phi_j)&=(p^m(t),  \phi_j), \label{eq5.b} 
		\end{align}\label{eq5.}
	\end{subequations}
	%	\begin{align}
	%	(u_{t}^m(t),\phi_j)+(\Delta v_{t}^m(t),\Delta \phi_j)+\alpha(\nabla v^m(t), \nabla \phi_j)=(\nabla g(v^m),  \phi_j),  \label{eq31}
	%	\end{align}
	with $u^m(0)=u_{0,m},\;\;\;\mbox{for} \;\phi_j \in \digamma^{m}, j=1,2,\ldots, m.$
	
	$ \mbox{Here,}\;u^{m}(0) = \sum_{j=1}^{m} d_m^j(0)\phi_{j},\; d_m^j(0)=(u_0, \phi_j),\;j=1,2,\ldots m. $ Similarly, we may define $p^{m}(0)$ as well.
	Once we use equation (\ref{eq30.}) in (\ref{eq5.}), we get the nonlinear system of ODE (see Appendix \ref{eq55.} for precisely solved system).
	Now, applying the Picard existence theorem guarantees that there exists a $T^*$ with $0<T^*<T$ such that the above obtained nonlinear system has the local solutions $(u^m(t),p^m(t))$ in the subspace $\digamma^m\times \digamma^m$ for a.e. $t\in [0,T^*].$ Our next goal is to extend the time $T^*$ to $T$.
	\vspace{-0.1cm}
	\item \textit{$H^1-$ estimate:} Multiplying equation (\ref{eq5.a}) by $d_m^j(t)$, equation (\ref{eq5.b}) by $d_m^j(t)$, $b_{m,t}^j(t)$ and then summing over $j=1,2,\ldots, m$ to obtain
	\begin{align}
	\frac{1}{2}\frac{\mathrm{d}}{\mathrm{d}t}\big[\|u^m(t)\|^2+\| p^m(t)\|^2\big] +\alpha \|\nabla u^m(t)\|^2=(\nabla\cdot g(u^m),  u^m(t)).\label{eq32.}
	\end{align}
%	Now, let us define 
%	$ G(s):= \int_{0}^{s} g(z)dz$  with $G^{'}(s) =g(s),\;\; s \in \mathbb{R}$
%	such that
%	\begin{equation}
%	\nabla\cdot G(v^m)= g(v^m)\cdot\nabla v^m. \noN
%	\end{equation}
%	Using the divergence theorem \cite{quarteroni2009numerical} with boundary condition (\ref{eq2}) and $G(0)=0$ in equation \eqref{eq32.}, we have \vspace{-1.5mm}
%	\begin{equation}
%	\int_{\Omega} (\nabla\cdot g(v^m))v^m d\Omega = - \int_{\Omega} g(v^m)\cdot \nabla v^m d\Omega = - \int_{\Omega} \nabla\cdot G(v^m) = 0.\vspace{-1.5mm} \label{eq33.}
%	\end{equation}
	
	Integrating equation \eqref{eq32.} from $0$ to $t$ and using equation \eqref{eqn7} yields
	\vspace{-0.2cm}
	\begin{equation}
	\|u^m(t)\|^2+\|p^m(t)\|^2 +\alpha\int_{0}^{t} \|\nabla u^m(s)\|^2ds\leq\|u_0\|^2+\|p_0\|^2.\nonumber
	\end{equation}
	Here, we have used the fact that $\|u^m(0)\| \leq \|u_0\|$ and $\|p^m(0)\|\leq \|p_0\|$. 
	Finally, taking the supremum over $0\leq t\leq T$ to get
	\begin{equation}
	\sup_{0\leq t\leq T} \|u^m(t)\|^2+	\sup_{0\leq t\leq T}\|p^m(t)\|^2+\alpha\int_{0}^{T} \|\nabla u^m(t)\|^2dt\leq C_1.  \label{eq37.}
	\end{equation}
	The above equation confirms that the right hand side is independent of $m$.
	\item \textit{Estimate of time derivative of $u^m$ and $p^m$:}
	From the equation (\ref{eq37.}), it is obvious that
	\vspace{-0.2cm}
	$$\|p^m(t)\|\leq C_1.\vspace{-0.1cm}$$
	Also, we have 
	\begin{eqnarray}
	\|g(u^{m})\|^{2}&=&2\int_{\Omega} \Big(\frac{1}{2}(u^{m})^{2} + u^{m} \Big)^{2} d \Omega\nonumber\\
	&\leq& 2\bigg(\frac{1}{2} \int_{\Omega} (u^{m})^{4} d \Omega +2\int_{\Omega} (u^{m})^{2} d \Omega\bigg)\nonumber\\
	&\leq& C_2(q, \|u_0\|, \|p_0\|). \label{eq39.}
	\end{eqnarray}
	Here, we have used the embedding result $\|u^{m}(t)\|_{L^{q}(\Omega)} \leq C_{3}\|u^{m}(t)\|_{H^{1}(\Omega)}$ for $\;1 \leq q<\infty$ (see, \cite{quarteroni2009numerical}) and stability estimate \eqref{st}. Also, note that $C_2$ is depending on $\|u_0\|$ and $\|p_0\|$.  Consequently,  $g(u^{m})$ is bounded in  $L^{\infty}(I;L^{2}(\Omega)).$
	
	To estimate ${u_t^m}$, multiplying equation (\ref{eq5.a}) by $d_{m,t}^j(t)$,  equation (\ref{eq5.b}) by $b_{m,t}^j(t)$ and then summing over $j=1,2,\ldots m$, we obtain
	\begin{subequations}
		\begin{align}
		(u_{t}^m(t),u_{t}^m(t))+(\nabla p^m_{t}(t),\nabla u_{t}^m(t))&=-\alpha(p^m(t), u_{t}^m(t))-( g(u^m), \nabla u_{t}^m(t)),  \label{eq555.a}\\
		(\nabla u_t^m(t),\nabla p_t^m(t))&=(p_t^m(t),  p_t^m(t)). \label{eq555.b} 
		\end{align}\label{eq555.}
	\end{subequations}
	%	\begin{align}
	%	&&(v_{t}^m(t),v_{t}^m(t))+(\Delta v_{t}^m(t),\Delta v_{t}^m(t))=-\alpha(\nabla v^m(t), \nabla v_{t}^m(t)) \nonumber\\&&-( g(v^m), \nabla v_{t}^m(t)). \label{eq83}
	%	\end{align}
	 \noindent Here, we have used the fact that $p_t^m=-\Delta u_t^m$. Now, first using \eqref{eq555.b} in \eqref{eq555.a} and then applying Cauchy-Schwarz inequality and Young inequality with equation \eqref{ce} in RHS of equation \eqref{eq555.a}, we get a suitable positive constant $C_4$ such that
	$$\|u^{m}_t(t)\|^2\leq C_4(\|g(u^{m})\|^{2}+\|p^{m}(t)\|^2 ).$$
	Integrating the above equation from $0$ to $T$ and using bounds of $p^m(t)$ and $g(u^{m})$, we get a positive constant $C_5$ such that
	\begin{equation}
	\int_{0}^{T}\|u^{m}_t(t)\|^2dt\leq C_5 .\label{eq40.}
	\end{equation}
	To estimate ${p_t^m}$, multiplying equation (\ref{eq5.a}) by $b_{m,t}^j(t)$,  equation (\ref{eq5.b}) by $d_{m,t}^j(t)$ and then summing over $j=1,2,\ldots m$, we obtain
	\begin{subequations}
		\begin{align}
		(u_{t}^m(t),p_{t}^m(t))+(\nabla p^m_{t}(t),\nabla p_{t}^m(t))&=-\alpha(p^m(t), p_{t}^m(t))-( g(u^m), \nabla p_{t}^m(t)),  \label{eq5555.a}\\
		(\nabla u_t^m(t),\nabla u_t^m(t))&=(p_t^m(t),  u_t^m(t)). \label{eq5555.b} 
		\end{align}\label{eq5555.}
	\end{subequations}
	Again, first using \eqref{eq5555.b} in \eqref{eq5555.a} and then applying Cauchy-Schwarz inequality, Young inequality and  Poincar\'{e} inequality, we get a suitable positive constant $C_6$ such that
	$$\|\nabla p^{m}_t(t)\|^2\leq C_6(\|g(u^{m})\|^{2}+\|p^{m}(t)\|^2 ).$$
	Integrating the above equation from $0$ to $T$ and using bounds of $p^m(t)$ and $g(u^{m})$ proved above, we get a positive constant $C_7$ such that
	\begin{equation}
	\int_{0}^{T}\|\nabla p^{m}_t(t)\|^2dt\leq C_7 .\label{eq40..}
	\end{equation}
	\item \textit{Passing to the limit:} Estimates (\ref{eq37.}), (\ref{eq40.}) and (\ref{eq40..}) conclude that the sequence $\{u^m\}_{m=1}^{\infty}$ is uniformly bounded and independent of $m$. Consequently, using the Banach-Alaoglu theorem \cite{chung1994finite,brezis2011functional}, one can extract a subsequence $\{u^{m_k}\}_{k=1}^{\infty}$ of $\{u^m\}_{m=1}^{\infty}$ such that 
	\begin{eqnarray}
	&& u^{m_{k}} \xrightarrow{\textrm{weak}-\ast} u \;\;in\;\; L^{\infty}(I; L^{2}(\Omega)), \nonumber\\&&
	p^{m_{k}} \xrightarrow{\textrm{weak}-\ast} p \;\;in\;\; L^{\infty}(I; L^{2}(\Omega)), \nonumber\\&&
	u^{m_{k}} \xrightarrow{\textrm{weakly}} u \;\;in\;\; L^{2}(I; H_{0}^{1}(\Omega)), \nonumber\\&&
	u^{m_{k}}_t \xrightarrow{\textrm{weakly}} u_t \;\;in\;\; L^{2}(I; L^{2}(\Omega)),\nonumber\\&&
	p^{m_{k}}_t \xrightarrow{\textrm{weakly}} p_t \;\;in\;\; L^{2}(I; H^{1}(\Omega)).\label{eq41.}	
	\end{eqnarray}
	Due to Rellich–Kondrachov theorem, the embedding of $H_{0}^{1}(\Omega)$ is compact in $L^{2}(\Omega)$ \cite{evans2022partial, taylor1996partial}, the Aubin-Lions compactness lemma \cite{simon1986compact} asserts the following strong convergence
	\begin{eqnarray}
	&&u^{m_{k}} {\longrightarrow} \;u \;\;in\;\; L^{2}(I; L^{2}(\Omega)).\label{eq41*.}
	\end{eqnarray}
	Now, we shall try to pass the limit in (\ref{eq5.}) along the subsequence $\{u^{m_{k}}\}_{k=1}^{\infty}$ and $\{p^{m_{k}}\}_{k=1}^{\infty}$. To do the task, let us take a function $\chi, \chi'\in C^1([0,T]; H_0^1(\Omega))$ with the expression $\chi(t) = \sum_{j=1}^{M} d_m^j(t)\phi_{j}$ and $\chi'(t) = \sum_{j=1}^{M} b_m^j(t)\phi_{j}$, where  $\{d_m^j(t)\}_{j=1}^{M}$ and $\{b_m^j(t)\}_{j=1}^{M}$ are smooth functions. Now, choosing $m\geq M$, multiplying equation (\ref{eq5.a}) by $d_m^j(t)$, equation (\ref{eq5.b}) by $b_m^j(t)$, summing from $j=1,2,\ldots, M$ and then integrating it from $0$ to $T$, we obtain 
	\begin{subequations}
		\begin{align}
		&\int_{0}^{T}\bigg[(u_{t}^m(t),\chi(t))+(\nabla p_{t}^m(t),\nabla \chi(t))+\alpha(p^m(t), \chi(t))\bigg]dt\nonumber\\&~~~~~~~~~~~~~~~~~~~~~~~~~~~~~~~~~~ =\int_{0}^{T}\bigg[(\nabla\cdot g(u^m),  \chi(t))\bigg]dt,  \label{eq42.a}\\
		&\int_{0}^{T}\bigg[(\nabla u^m(t),\nabla \chi'(t))\bigg]dt=\int_{0}^{T}\bigg[(p^m(t), \chi'(t))\bigg]dt.\label{eq42.b}
		\end{align}\label{eq42.}
	\end{subequations}
	Let us choose $m=m_k$ and use the convergence  of equations ($\ref{eq41.}$) and ($\ref{eq41*.}$)  in the above equation to get
	\begin{subequations}
		\begin{align}
		&\int_{0}^{T}\bigg[(u_{t}(t),\chi(t))+(\nabla p_{t}(t),\nabla \chi(t))+\alpha(p(t), \chi(t))\bigg]dt\nonumber\\&~~~~~~~~~~~~~~~~~~~~~~~~~~~~~~~~~~ =\int_{0}^{T}\bigg[(\nabla\cdot g(u),  \chi(t))\bigg]dt,  \label{eq43.a}\\
		&\int_{0}^{T}\bigg[(\nabla u(t),\nabla \chi'(t))\bigg]dt=\int_{0}^{T}\bigg[(p(t), \chi'(t))\bigg]dt.\label{eq43.b}
		\end{align}\label{eq43.}
	\end{subequations}
	The density argument asserts that equation (\ref{eq43.}) follows for every $\chi$ and $\chi' \in {L^{2}(I; H_0^1(\Omega))}$. In particular, we get equation \eqref{eq5} for $\chi,\chi' \in H_0^1(\Omega),$ a.e. $t\in[0,T]$.
	\item \textit{Initial data:} Now, we claim that $u(0)=u_0$. From (\ref{eq43.a}), we have
	\begin{align}
	&&\int_{0}^{T}\bigg[(-u(t),\chi_t(t))+(\nabla p_{t}(t),\nabla \chi(t))+\alpha(p(t), \chi(t))\bigg]dt\nonumber\\&& =\int_{0}^{T}\bigg[(\nabla \cdot g(u),  \chi(t))\bigg]dt+(u(0),\chi(0)), \label{eq44.}
	\end{align}
	$\forall$ $\chi\in C^1([0,T]; H_0^1(\Omega))$ with $\chi(T)=0$. Similarly, equation (\ref{eq42.a}) follows
	\begin{align}
	&&\int_{0}^{T}\bigg[-(u^m(t),\chi_{t}(t))+(\nabla p_{t}^m(t),\nabla \chi(t))+\alpha(p^m(t), \chi(t))\bigg]dt\nonumber\\&& =\int_{0}^{T}\bigg[(\nabla\cdot g(u^m),  \chi(t))\bigg]dt+(u^m(0),\chi(0)).  \label{eq45.}
	\end{align}
	Using convergence proved in equations (\ref{eq41.}) and (\ref{eq41*.}) with $m=m_k$, taking the limit along the subsequence $\{u^{m_{k}}\}_{k=1}^{\infty}$, $\{p^{m_{k}}\}_{k=1}^{\infty}$ and using the fact that $u^m(0)\to u_0 \;\mbox{as\;} m \to\infty$ in above leads to
	\begin{align}
	&&\int_{0}^{T}\bigg[-(u(t),\chi_{t}(t))+(\nabla p_{t}(t),\nabla \chi(t))+\alpha(p(t), \chi(t))\bigg]dt\nonumber\\&& =\int_{0}^{T}\bigg[(\nabla\cdot g(u),  \chi(t))\bigg]dt+(u_0,\chi(0)).  \label{eq46.}
	\end{align}
	As $\chi(0)$ is arbitrary, comparing equation (\ref{eq44.}) with (\ref{eq46.}), we can easily deduce that $u(0)=u_0$. 
	\item \textit{Uniqueness:} Let us assume that $(u_1,p_1)$ and $(u_2,p_2)$ are two weak solutions to the equation (\ref{eq1*}). Setting $W:=u_1-u_2$ and $Z:=p_1-p_2$, leads us to 
	\begin{subequations}
		\begin{align}
		(W_{t}(t),\chi)+(\nabla Z_{t}(t),\nabla \chi)+\alpha(Z(t),  \chi)&=-( g(u_1)-g(u_2),  \nabla\chi), \label{eq47.a}\\
		(\nabla W(t),\nabla \chi')&=(Z(t),  \chi'), \label{eq47.b} \\
		W(0)=0, Z(0)&=0. \label{eq47.c}
		\end{align}\label{eq47.}
	\end{subequations}
	Taking $\chi=W$ in (\ref{eq47.a}), $\chi'=W$ in (\ref{eq47.b}) and using Cauchy–Schwarz inequality, Young inequality  and stability estimate for the function $g$, we get
	$$\frac{\mathrm{d}}{\mathrm{d}t}\big[\|W(t)\|^2+\|Z(t)\|^2\big] \leq C\|W(t)\|^2.$$
	Here, we have also used the fact that $(\nabla W(t),\nabla Z_t)=(Z(t),  Z_t(t))$.
	Integrating the above equation w.r.t. time $t$, we have
	$$\|W(t)\|^2+\|Z(t)\|^2\leq\|W(0)\|^{2} +\|Z(0)\|^{2}+ C\int_{0}^{t}\|W(s)\|^{2}  ds.$$
	Finally, the Gronwall’s lemma implies
	$$\|W(t)\|^{2} \leq e^{Ct}\big(\|W(0)\|^{2}+\|Z(0)\|^{2}\big)=0.$$
	Hence, $ u_{1} = u_{2}$ and using (\ref{eq47.b}), we get $ p_{1} = p_{2}.$
\end{enumerate}
\end{proof}

\section{Mixed Formulation Semidiscrete Error Estimates}\label{semimix}
Our main goal in this section is to prove \cref{thmerror.}. Beginning from the notation introduced in \cref{Sec P2}, we introduce the following Ritz projection $\Pi_h$ onto the subspace $S_h$ \cite{thomee2007galerkin}:
\begin{equation}
(\nabla \Pi_{h}u, \nabla \chi)=(\nabla u, \nabla \chi)\;\;\; \forall \chi \in S_h,\;\;\;u \in H_0^1(\Omega).\label{5.3}
\end{equation} 
\begin{equation}
(\nabla \Pi_{h}p, \nabla \chi')=(\nabla p, \nabla \chi')\;\;\; \forall \chi' \in S_h,\;\;\;p \in H_0^1(\Omega).\label{5.4}
\end{equation}

Setting $\chi=\Pi_hu$, we can see that $\|\nabla \Pi_h u\| \leq \|\nabla u\|,\; \forall u \in H_0^1(\Omega)$. Also, one can prove the following property for Ritz projection $\Pi_h$ (Lemma 1.1, Chapter 1, \cite{thomee2007galerkin}):
\begin{equation}
\|\Pi_h u-u\|+h\|\nabla(\Pi_h u-u)\|  \leq Ch^{s}\|u\|_{s}, \label{eq76}
\end{equation}
\begin{equation}
\|\Pi_h p-p\|+h\|\nabla(\Pi_h p-p)\|  \leq Ch^{s}\|p\|_{s}, \label{eq76.}
\end{equation}
for each $u,p\in H^s(\Omega) \cap H^1_0(\Omega),\;\; 1 \leq s \leq r$. 

\noindent Before proceeding further, we derive the following space discrete stability estimate.
\begin{lemma}[Stability estimate] \label{thm1..}
	Let $(u^h,p^h)$ be the solution of \eqref{eq77.} then the following property holds:
	\begin{equation}
	\|u^h(t)\|_{1}  \leq  C (\|u^h_{0}\|, \|p^h_{0}\|),  \;\;t \in(0,T]. \label{st.}
	\end{equation} 
	Moreover, there exists a constant $C_1 > 0$ such that
	\begin{equation}
	\|u^h\|_{L^{\infty}(L^{q}(\Omega))}  \leq  C_1 (q, \|u^h_{0}\|, \|p^h_{0}\|),\quad \;\forall\; 1\leq q<\infty.  
	\end{equation}
\end{lemma}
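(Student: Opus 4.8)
The plan is to reproduce the continuous-level argument of \cref{thm1.} almost verbatim; the only structural point to keep in mind is that since $S_h$ is a fixed, time-independent finite-dimensional subspace of $H_0^1(\Omega)$, each of $u^h(t)$, $p^h(t)$, $u^h_t(t)$, and $p^h_t(t)$ lies in $S_h$ and is therefore an admissible test function in \eqref{eq77.a}--\eqref{eq77.b}.

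First I would take $\chi = u^h$ in \eqref{eq77.a}, producing
$$(u^h_t, u^h) + (\nabla p^h_t, \nabla u^h) + \alpha(p^h, u^h) = (\nabla \cdot g(u^h), u^h).$$
To close this into an energy identity, I would differentiate the constraint \eqref{eq77.b} in $t$ (legitimate precisely because $S_h$ does not vary with $t$), giving $(\nabla u^h_t, \nabla \chi') = (p^h_t, \chi')$ for all $\chi' \in S_h$, and then test with $\chi' = p^h_t$ to obtain $(\nabla u^h, \nabla p^h_t) = (p^h, p^h_t) = \tfrac{1}{2}\tfrac{d}{dt}\|p^h\|^2$. Testing \eqref{eq77.b} directly with $\chi' = u^h$ converts the remaining coupling term into $(p^h, u^h) = \|\nabla u^h\|^2$. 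Substituting both relations yields the clean identity
$$\tfrac{1}{2}\tfrac{d}{dt}\big[\|u^h\|^2 + \|p^h\|^2\big] + \alpha\|\nabla u^h\|^2 = (\nabla \cdot g(u^h), u^h).$$

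Next I would dispose of the nonlinear term: since $u^h \in S_h \subset H_0^1(\Omega)$ vanishes on $\partial\Omega$, the same divergence-theorem computation as in \eqref{eqn7} (using $G(0)=0$) gives $(\nabla \cdot g(u^h), u^h) = 0$. With $\alpha > 0$ this leaves $\tfrac{d}{dt}[\|u^h\|^2 + \|p^h\|^2] \leq 0$, which on integration in time gives $\|u^h(t)\|^2 + \|p^h(t)\|^2 \leq \|u^h_0\|^2 + \|p^h_0\|^2$. To upgrade this $L^2$ bound to the claimed $H^1$ bound, I would test \eqref{eq77.b} once more with $\chi' = u^h$ and apply Cauchy--Schwarz and Young's inequality to get $\|\nabla u^h\|^2 = (p^h, u^h) \leq \tfrac{1}{2}\|u^h\|^2 + \tfrac{1}{2}\|p^h\|^2$, exactly as in \eqref{ce}; combining with the $L^2$ bound establishes \eqref{st.}. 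Finally, the embedding $H^1(\Omega) \hookrightarrow L^q(\Omega)$ for $1 \leq q < \infty$ (valid for $n \leq 2$) promotes \eqref{st.} to the stated $L^\infty(I;L^q)$ estimate.

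I do not anticipate a genuine obstacle here, as the argument is structurally identical to \cref{thm1.}. The only point demanding care is the bookkeeping of which test function enters the constraint equation: recognizing that differentiating \eqref{eq77.b} in time and testing with $p^h_t$ is what converts the mixed term $(\nabla p^h_t, \nabla u^h)$ into $\tfrac{1}{2}\tfrac{d}{dt}\|p^h\|^2$, while testing the undifferentiated constraint with $u^h$ handles the $\alpha$-term. These manipulations are all licensed at the discrete level precisely because $S_h$ is time-independent and contained in $H_0^1(\Omega)$, which is also what guarantees the cancellation of the nonlinearity.
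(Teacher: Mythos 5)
Your proposal is correct and is essentially the paper's own proof: test \eqref{eq77.a} with $\chi = u^h$ and \eqref{eq77.b} with $\chi' = u^h$ and $\chi' = p^h_t$ to form the energy identity, cancel the nonlinear term via the divergence-theorem computation \eqref{eqn7}, integrate in time, recover the $H^1$ bound from \eqref{eq77.b} with Cauchy--Schwarz and Young's inequalities, and conclude with the embedding $H^1(\Omega) \hookrightarrow L^q(\Omega)$. One minor bookkeeping slip: the identity $(\nabla u^h, \nabla p^h_t) = (p^h, p^h_t)$ comes from testing the \emph{undifferentiated} constraint \eqref{eq77.b} with $\chi' = p^h_t$ (admissible precisely because $S_h$ is time-independent), whereas differentiating the constraint first and then testing with $p^h_t$, as you describe, would instead yield $(\nabla u^h_t, \nabla p^h_t) = \|p^h_t\|^2$ --- harmless here, since the identity you actually invoke is the correct one.
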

\begin{proof}
Setting $\chi = u^h$ in the equation \eqref{eq77.a} and $\chi' = u^h,p^h_t$ in the equation \eqref{eq77.b}, we obtained
\begin{equation}	
\dfrac{1}{2} \dfrac{\mathrm{d}}{\mathrm{d}t}\Big[\|u^h\|^{2} + \| p^h\|^{2}\Big]  +   \alpha \|\nabla u^h\|^{2}  = \int_{\Omega} (\nabla\cdot g(u^h))u^hd\Omega .\label{eqn5..}
\end{equation}
Using equations \eqref{eqn7} in \eqref{eqn5..} with positive parameter $\alpha$, we get
$$ \dfrac{\mathrm{d}}{\mathrm{d}t}\Big[\|u^h\|^{2} + \| p^h\|^{2}\Big]\leq 0 . $$
We integrate the above inequality w.r.t.  time $t$ to get 
$$\|u^h(t)\|^{2} + \|p^h(t)\|^{2} \leq \|u^h_{0}\|^2+\|p^h_{0}\|^2.$$
Again, using $\chi' = u^h$ in the equation \eqref{eq77.b} with Cauchy Schwartz and Young's inequalities in the above, we get 
\begin{equation}
\|u^h(t)\|_{1}\leq \frac{1}{2}\|u^h(t)\|^{2} + \frac{1}{2}\|p^h(t)\|^{2}  \leq  C  ,  \;\;t \in(0,T].\label{ce.}
\end{equation} 
Finally, we use the fact that $H^{1}(\Omega)$ is embedded in $L^{q}(\Omega)$ for $1\leq q<\infty$  (cf. Sobolev’s Inequality, \cite{quarteroni2009numerical}) in the above lemma to complete the rest of the proof.
\end{proof}

Finally, we can prove \cref{thmerror.}.
\begin{proof}[Proof of \cref{thmerror.}] First, we write the error in the following form $$u-u^h= (u-\Pi_hu)+(\Pi_hu-u^h)=\rho+\theta,$$
$$p-p^h= (p-\Pi_hp)+(\Pi_hp-p^h)=\eta+\xi.$$
Also, we shall denote $\theta$ for $\theta(t)$ and similarly for $\rho, \eta$ and $\xi$, when no risk of confusion exists.

\noindent Subtracting equation (\ref{eq77.}) from (\ref{eq5}) and introducing Ritz projection $\Pi_h$ with properties \eqref{5.3} and \eqref{5.4},  we get
	\begin{subequations}
\begin{align}
&\big(\theta_t,\chi\big)+\big(\nabla \xi_{t},\nabla \chi\big)+\alpha\big(\xi, \chi\big) =-\big(\rho_t,\chi\big)-\alpha\big(\eta, \chi\big)-\big( g(u)-g(u^h), \nabla \chi\big),\label{eq49.a}\\
&\big(\nabla \theta,\nabla \chi'\big)=\big(\eta, \chi'\big)+\big(\xi, \chi'\big).\label{eq49.b}
\end{align} \label{eq49.}
\end{subequations}
Setting $\chi=\theta$ in \eqref{eq49.a} and using $\chi'=\xi_t, \theta $ in \eqref{eq49.b} to get
\begin{align}
&\frac{1}{2}\frac{\mathrm{d}}{\mathrm{d}t}\bigg[\|\theta(t)\|^2+\|\xi(t)\|^2\bigg]+\alpha\|\nabla \theta\|^2=-\big(\rho_t,\theta\big)-\big(\eta, \xi_t\big)-\big( g(u)-g(u^h), \nabla \theta\big) . \label{eq50.}
\end{align}
Once we use Young inequality and Cauchy–Schwarz inequality in equation \eqref{eq49.b} with $\chi'=\theta$, we get
\begin{equation}
\|\nabla \theta(t)\|^2\leq C\big(\|\theta(t)\|^2+\|\eta(t)\|^2\big)+\|\xi(t)\|^2\big).\label{use}
\end{equation}
Again, using the Cauchy–Schwarz inequality, Young inequality, Lipschitz continuity of function $g$ with the stability estimate and equation \eqref{use} in equation \eqref{eq50.} yields\vspace{-2mm}
\begin{align}
\frac{1}{2}\frac{\mathrm{d}}{\mathrm{d}t}\bigg[\|\theta(t)\|^2+\|\xi(t)\|^2\bigg]&\leq C\big(\|\theta(t)\|^2+\|\eta(t)\|^2+\|\xi(t)\|^2+\|\rho(t)\|^2+\|\rho_t(t)\|^2\big)\nonumber\\
& -\partial_t\big(\eta,\xi\big)+\big(\eta_t,\xi\big).\label{eq50..}
\end{align}
Integrating the above inequality from $0$ to $T$, we get
\begin{align}
&\|\theta(t)\|^2+\|\xi(t)\|^2\leq C\big(\|\eta(t)\|^2+\|\eta(0)\|^2+\|\xi(0)\|^2+\|\theta(0)\|^2\big)\nonumber\\
&~~~~~~~~~~~+\int_{0}^{T} \big(\|\theta(t)\|^2+\|\xi(t)\|^2+\|\eta(t)\|^2+\|\rho(t)\|^2+\|\rho_t(t)\|^2+\|\eta_t(t)\|^2\big).
\label{eq50...}
\end{align}
Using \eqref{eq76} and \eqref{eq76.}, we have\vspace{-2mm}  
\begin{align}
\|\eta(t)\|^2&\leq Ch^{2r}\|p(t)\|_r^2,\nonumber\\\int_{0}^{T}\|\eta(t)\|^2&\leq Ch^{2r}\int_{0}^{T}\|p(t)\|_r^2,\nonumber\\\int_{0}^{T}\|\rho(t)\|^2&\leq Ch^{2r}\int_{0}^{T}\|u(t)\|_r^2,\nonumber\\\int_{0}^{T}\|\eta_t(t)\|^2&\leq Ch^{2r}\int_{0}^{T}\|p_t(t)\|_r^2,\nonumber\\\int_{0}^{T}\|\rho_t(t)\|^2&\leq Ch^{2r}\int_{0}^{T}\|u_t(t)\|_r^2.\nonumber\\\label{eq57.}
\end{align}
Also, considering equation \eqref{eq49.b} at $t=0$ and putting $\chi'=\xi(0)$ while using $\theta(0)=0$, we get the estimate of $\xi(0)$ as follows
\begin{eqnarray}
\|\xi(0)\|^2\leq \frac{1}{2}\|\xi(0)\|^2+\frac{1}{2}\|\eta(0)\|^2.\nonumber
\end{eqnarray}
Hence, we get 
\begin{eqnarray}
\|\xi(0)\|^2\leq Ch^{2r}\|p(0)\|_r^2.\label{111}
\end{eqnarray}
Finally, applying the Gronwall's lemma on equation (\ref{eq50...}) and using the given hypothesis together with estimates (\ref{eq57.}) and  \eqref{111}  give rise to
\begin{eqnarray}
 \|\theta(t)\|^2+\|\xi(t)\|^2\leq Ch^{2r}.\label{eq58.}
\end{eqnarray}
Again, using equations \eqref{eq49.b}, \eqref{eq57.} and \eqref{eq58.}, we get
\begin{eqnarray}
\|\theta(t)\|^2_1\leq Ch^{2r}.\label{eq58..}
\end{eqnarray}
Finally, combining equations \eqref{eq58.} and \eqref{eq58..} with known estimates \eqref{eq76} and \eqref{eq76.} while taking supremum over $0\leq t\leq T$, we get the final estimate \eqref{eq48.}.
\end{proof}

\section{Full Discretization of Mixed Problem $P^2$}\label{fully-mix}
Our main task in this section is to prove \cref{thm:P2-full-wellposed,thmBE*} regarding the discretization \eqref{eq:P2-full}. Firstly, we shall derive the following fully discrete stability estimate:
\begin{lemma} \label{lemma11.}
	Let $(U^m,P^m)$ be the solution of \eqref{eq85.}, then there exists a constant $C$ such that the following estimate holds:
 \begin{equation}
	\|U^{m}\|_{1} \leq C(\|U^{0}\|, \|P^{0}\|),\;\;  m \geq 1. \nonumber
	\end{equation}
 Moreover, there exists a constant $C_1 > 0$ such that
	\begin{equation}
	\|U^m\|_{L^{q}(\Omega)}  \leq  C_1 (q, \|U^{0}\|, \|P^{0}\|),\quad \;\forall\; 1\leq q<\infty.  
	\end{equation}
\end{lemma}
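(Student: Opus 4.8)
The plan is to mirror the semidiscrete stability argument of \cref{thm1..}, replacing time differentiation by the backward difference $\delta_t$ and the identity $\frac{1}{2}\frac{d}{dt}\|v\|^2=(v_t,v)$ by its discrete counterpart. Concretely, I will repeatedly use the elementary identity $(a-b,a)=\frac{1}{2}(\|a\|^2-\|b\|^2+\|a-b\|^2)$, which gives $(\delta_t V^m,V^m)\geq \frac{1}{2k}(\|V^m\|^2-\|V^{m-1}\|^2)$ for any sequence $\{V^m\}$. First I would set $\chi=U^m$ in \eqref{eq85.a}. The nonlinear term $(\nabla\cdot g(U^m),U^m)$ vanishes by \eqref{eqn7} (integration by parts using $U^m|_{\partial\Omega}=0$ and $G(0)=0$), and the zeroth-order term gives $(\delta_t U^m,U^m)\geq \frac{1}{2k}(\|U^m\|^2-\|U^{m-1}\|^2)$.

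The decisive step, and the place where the mixed structure is used, is the term $(\nabla\delta_t P^m,\nabla U^m)$. I would eliminate the gradient of $U^m$ via the discrete constraint \eqref{eq85.b} tested against $\chi'=\delta_t P^m$, yielding $(\nabla U^m,\nabla\delta_t P^m)=(P^m,\delta_t P^m)$, and then the discrete identity gives $(P^m,\delta_t P^m)\geq \frac{1}{2k}(\|P^m\|^2-\|P^{m-1}\|^2)$. Testing \eqref{eq85.b} instead with $\chi'=U^m$ shows $\alpha(P^m,U^m)=\alpha\|\nabla U^m\|^2\geq 0$, so this term may simply be discarded. Collecting the three contributions, multiplying through by $2k$, and telescoping the resulting inequality $\|U^m\|^2+\|P^m\|^2\leq\|U^{m-1}\|^2+\|P^{m-1}\|^2$ down to $m=0$ produces the $m$-independent bound $\|U^m\|^2+\|P^m\|^2\leq\|U^0\|^2+\|P^0\|^2$. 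Note that, unlike the primal fully discrete case of \cref{lemma11}, this telescopes cleanly with constant $1$ and requires no discrete Grönwall step, so the constant does not depend on $m$.

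To upgrade the resulting $L^2$ control of $U^m$ to the claimed $H^1$ bound, I would use \eqref{eq85.b} once more with $\chi'=U^m$, giving $\|\nabla U^m\|^2=(P^m,U^m)\leq\frac{1}{2}\|P^m\|^2+\frac{1}{2}\|U^m\|^2$ by Cauchy--Schwarz and Young. Hence $\|U^m\|_1^2\leq\frac{3}{2}\|U^m\|^2+\frac{1}{2}\|P^m\|^2\leq 2(\|U^0\|^2+\|P^0\|^2)$, which is the first asserted estimate. The $L^q$ bound then follows from the Sobolev embedding $H^1(\Omega)\hookrightarrow L^q(\Omega)$, valid for all $1\leq q<\infty$ when $n\leq 2$, exactly as in \cref{thm1..}.

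I expect the only genuinely delicate point to be the manipulation of $(\nabla\delta_t P^m,\nabla U^m)$ through the discrete constraint \eqref{eq85.b}: one must test that equation with $\delta_t P^m$ (and separately with $U^m$) to convert both the mixed coupling term and the $\alpha(P^m,U^m)$ term into controllable, sign-definite quantities. Once that conversion is in place, the remainder is a routine discrete energy estimate.
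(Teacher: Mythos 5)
Your proposal is correct and follows essentially the same route as the paper's proof: test \eqref{eq85.a} with $U^m$, use \eqref{eq85.b} with $\chi'=\delta_t P^m$ and $\chi'=U^m$ to convert the coupling term into $(P^m,\delta_t P^m)$ and the $\alpha$-term into $\alpha\|\nabla U^m\|^2\geq 0$, kill the nonlinearity via \eqref{eqn7}, telescope the discrete energy inequality, and recover the $H^1$ and $L^q$ bounds from \eqref{eq85.b} and Sobolev embedding. Your version is in fact slightly cleaner in two respects: you state the discrete identity $(a-b,a)\geq\tfrac12(\|a\|^2-\|b\|^2)$ explicitly, and your $H^1$ upgrade $\|U^m\|_1^2\leq\tfrac32\|U^m\|^2+\tfrac12\|P^m\|^2$ has the norms correctly squared, where the paper's corresponding line appears to contain a typographical slip.
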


\begin{proof}  Setting $\chi = U^m$ in the equation \eqref{eq85.a} and $\chi' = U^m,\delta_t P^m$ in the equation \eqref{eq85.b}, we obtained
\begin{equation}	
\bigg(\frac{U^{m}-U^{m-1}}{k},U^m\bigg)+\bigg(\frac{P^{m}-P^{m-1}}{k},P^m\bigg)+\alpha(\nabla U^{m}, \nabla U^{m}) =( \nabla\cdot g( U^{m}), U^m) .\label{eqn5...}
\end{equation}
Using Cauchy Schwarz inequality, Young's inequality and equation \eqref{eqn7} in \eqref{eqn5...}  with positive parameter $\alpha$, we get
$$ \delta_t\Big[\|U^m\|^{2} + \| P^m\|^{2}\Big]\leq 0 . $$
Taking the sum from $1$ to $m$, we get 
$$\|U^m\|^{2} + \|P^m\|^{2} \leq \|U^{0}\|^2+\|P^{0}\|^2.$$
Again, using $\chi' = U^m$ in the equation \eqref{eq85.b} with Cauchy Schwartz and Young's inequalities in the above, we get 
\begin{equation}
\|U^m\|_{1}\leq \frac{1}{2}\|U^m\|^{2} + \frac{1}{2}\|P^m\|^{2}  \leq  C (\|U^{0}\|, \|P^{0}\|) ,  \;\;t \in(0,T].\label{ce..}
\end{equation} 
Finally, we use the fact that $H^{1}(\Omega)$ is embedded in $L^{q}(\Omega)$ for $1\leq q<\infty$  (cf. Sobolev’s Inequality, \cite{quarteroni2009numerical}) in the above lemma to complete the rest of the proof.
\end{proof}
We now provide the proof of \cref{thm:P2-full-wellposed}.
\begin{proof}[Proof of \cref{thm:P2-full-wellposed}]  Using the given hypothesis, the initial term $(U^{0},P^0)$ exists vacuously. Moreover, given that $(U^{0},P^0),(U^{1},P^1),\ldots,(U^{m-1},P^{m-1})$ exist. Now, using mathematical induction, we assert that $(U^{m},P^m)$ also exists. To do the task, let us first define an inner product and norm over $S_h\times S_h$:
$$\big(\psi,\psi'\big)_{S_h\times S_h}=\big(\psi_1,\psi'_1\big)_{S_h}+\big(\psi_2,\psi'_2\big)_{S_h},$$ 
$$\|\psi\|_{S_h\times S_h}^2=\|\psi_1\|_{S_h}^2+\|\psi_2\|_{S_h}^2,$$ 
for $\psi=(\psi_1,\psi_2)\;\mbox{and}\; \psi'=(\psi'_1,\psi'_2).$ 

\noindent Also, we set $\Lambda=[\Lambda_1,\Lambda_2]$ on $S_h\times S_h$  by defining maps $\Lambda_1, \Lambda_2:S_h\times S_h \to S_h$ such that
\begin{subequations}	
\begin{align}
&(\Lambda_1(\psi), \chi )_{S_h}= ( \psi_1 ,\chi)_{S_h} +(\nabla \psi_2,\nabla\chi)_{S_h} - ( U^{m-1} ,\chi)_{S_h} - (\nabla P^{m-1},\nabla\chi)_{S_h}\nonumber\\&~~~~~~~~~~~~~~~~~~~~~~~~~ +{{k}}\big[( g(\psi_1) ,\nabla\chi)_{S_h}+\alpha(\nabla \psi_1,\nabla\chi)_{S_h}\big],\;\;\;\; \forall \;\chi \in S_{h},\label{eq123a}\\
&(\Lambda_2(\psi), \chi' )_{S_h}=( \psi_2 ,\chi')_{S_h}-(\nabla \psi_1,\nabla\chi')_{S_h}\label{eq123b}.
\end{align}\label{eq123}
\end{subequations}
\noindent One can easily see that $\Lambda$ is a continuous map. 

\noindent Putting $(\chi,\chi') = (\psi_1,\psi_2)$ in the above equation to obtain
\begin{align}
(\Lambda(\psi), \psi)_{S_h\times S_h}&= (\Lambda_1(\psi), \psi_1)_{S_h}+(\Lambda_2(\psi), \psi_2)_{S_h} \nonumber\\
&= \|\psi\|^{2}_{S_h\times S_h}  - ( U^{m-1} ,\psi_1)_{S_h} - (\nabla P^{m-1},\nabla \psi_1)_{S_h}+ k  \alpha\|\nabla \psi_1\|^{2}_{S_h} ,\nonumber
\end{align}
which implies
\begin{eqnarray}
(\Lambda(\psi), \psi)_{S_h\times S_h}&\geq&\|\psi\|^{2}_{S_h\times S_h} - \frac{1}{2}\|U^{m-1}\|^{2}_{S_h}-\frac{1}{2}\|\psi_1\|^{2}_{S_h} - \frac{1}{4k\alpha}\|\nabla P^{m-1}\|^{2}_{S_h}\nonumber\\
&\geq& \frac{1}{2}\bigg(\|\psi\|^{2}_{S_h\times S_h}-\|U^{m-1}\|^{2}_{S_h}-\frac{1}{2k\alpha}\|\nabla P^{m-1}\|^{2}_{S_h}\bigg).\nonumber
\end{eqnarray}	
For $\|\psi\|_{S_h\times S_h}^2 = \|U^{m-1}\|^{2}_{S_h}+\frac{1}{2k\alpha}\|\nabla P^{m-1}\|^{2}_{S_h}+ C$ (choosing a suitable positive constant $C$ that satisfies the condition of the fixed point theorem), one can easily see that $(\Lambda(\psi), \psi)_{S_h\times S_h} > 0 $. Consequently, Brouwer fixed point theorem guarantees the existence of $\psi^{\star} \in S_h\times S_h$ with $\Lambda(\psi^{\star}) = 0$ such that  $\|\psi^{\star}\|_{S_h\times S_h} \leq \|U^{m-1}\|^{2}_{S_h}+\frac{1}{2k\alpha}\|\nabla P^{m-1}\|^{2}_{S_h}+ C$. Now, selecting  $(U^{m},P^m) = \psi^{\star}$ with $\Lambda(\psi^{\star}) = 0$ satisfies equation (\ref{eq85.}) and thus $(U^{m},P^m)$ exists.

To prove the uniqueness part, we assume that $(U^{m}_{1},P^{m}_{1})$ and $(U^{m}_{2},P^{m}_{2})$ are the two distinct solutions of (\ref{eq85}). Choosing $W^{m}:= U^{m}_{1} - U^{m}_{2}$ and $Q^{m}:= P^{m}_{1} - P^{m}_{2}$, we get
\begin{subequations}
	\begin{align}
	(\delta_t W^m,\chi)+(\nabla \delta_t Q^m,\nabla \chi)+\alpha( Q^{m},  \chi) &=-(  g( U_1^{m})-g( U_2^{m}),  \nabla \chi),                \label{eq86.a}     \\
	(\nabla W^m,\nabla \chi')&=( Q^m,  \chi').                                   \label{eq86.b}                                                                                      \end{align} \label{eq86.}                                        
\end{subequations}
Now, we again use the induction method. Let us first assume that $W^{m-1}=Q^{m-1} = 0$ and we will prove that $W^{m}=Q^{m} = 0.$ For that, setting $\chi =  W^{m}$ in (\ref{eq86.a}) and $\chi' =   \delta_t Q^m, W^{m} $ in (\ref{eq86.b})  yields
\begin{eqnarray}
 \delta_{t} \Big(\|W^m\|^{2} + \|Q^m\|^{2}\Big) 
\leq C\Big(\|W^m\|^{2} + \|\nabla W^m\|^{2}\Big).  \label{eqn28*.}
\end{eqnarray}	
Here, we have  used the stability estimate for function $g$ in RHS such that %together with the boundedness of $\|U^{m}_{1}\|_{L^{\infty}(\Omega)}$ and
\begin{equation}
\|g(U_{1}^{m})-g(U_{2}^{m})\| \leq C_{1} \| W^{m}\|.
\end{equation}
Setting $\chi' = W^{m} $ in (\ref{eq86.b}) and then using Cauchy Schwarz inequality together with Young's inequality, we get
\begin{equation}
\|\nabla W^m\|^{2}\leq C_2\big(\| W^m\|^{2}+\| Q^m\|^{2}\big).\label{eqn29*.}
\end{equation}
From equations (\ref{eqn28*.}) and (\ref{eqn29*.}), we have 
\begin{eqnarray}
\delta_{t} \Big(\|W^m\|^{2} + \|Q^m\|^{2}\Big) 
\leq C_3\Big(\|W^m\|^{2} + \|Q^m\|^{2}\Big).  \label{eqn28*..}
\end{eqnarray}
Further, utilizing the definition $\delta_{t} \|W^m\|^{2}$ and $\delta_{t} \|Q^m\|^{2}$, we get
$$ \|W^m\|^{2}+\|Q^m\|^{2}\leq \frac{1}{1-C_3k}\big( \|W^{m-1}\|^{2}+\|Q^{m-1}\|^{2}\big).$$
Now, selecting sufficiently small k such that $1-C_3k>0$ and using the  condition $W^{m-1} =Q^{m-1}= 0$, we get $\|W^m\| = \|Q^m\|=0.$ Consequently, $W^m =Q^m= 0$ and hence we obtained the uniqueness of $(U^m,P^m)$.
\end{proof}

Finally, we provide the proof of \cref{thmBE*}:
\begin{proof}[Proof of \cref{thmBE*}] We write the error in the following form $$u(t^m)-U^m= (u(t^m)-\Pi_hu(t^m))+(\Pi_hu(t^m)-U^m)=\rho^m+\theta^m,$$
$$p(t^m)-P^m= (p(t^m)-\Pi_hp(t^m))+(\Pi_hp(t^m)-P^m)=\eta^m+\xi^m.$$
From properties \eqref{5.3} and \eqref{5.4}, we already know $\rho^m$ and $\eta^m$. So, we only need to find the bound of $\theta^m$ and $\xi^m$.

\noindent To do the task, let us subtract equation \eqref{eq85.} from \eqref{eq5} to get 
	\begin{subequations}
	\begin{align}
	&\big(\delta_t\theta^m,\chi\big)+\big(\nabla \delta_t\xi^m,\nabla \chi\big)+\alpha\big(\xi^m, \chi\big)+\alpha\big(\eta^m, \chi\big) =-\big(u_t(t^m)-\delta_t(\Pi_{h}u(t^m)),\chi\big)\nonumber\\
	&~~~~~~~~~-\big(\nabla p_t(t^m)-\nabla \delta_t(\Pi_{h}p(t^m)),\nabla\chi\big)+\big( g(U^m)-g(u(t^m)), \nabla \chi\big),\label{eq49.a.}\\
	&\big(\nabla \theta^m,\nabla \chi'\big)=\big(\eta^m, \chi'\big)+\big(\xi^m, \chi'\big).\label{eq49.b.}
	\end{align} \label{eq49..}
\end{subequations}
 We choose $\chi=\theta^m$ in \eqref{eq49.a.} and $\chi'=\theta^m,\delta_t\xi^m$ \eqref{eq49.b.} to get
\begin{subequations}
	\begin{align}
	&\big(\delta_t\theta^m,\theta^m\big)+\big(\nabla \delta_t\xi^m,\nabla \theta^m\big)+\alpha\big(\nabla \theta^m, \nabla \theta^m\big) =-\big(u_t(t^m)-\delta_t(\Pi_{h}u(t^m)),\theta^m\big)\nonumber\\
	&~~~~~~~~~-\big(\nabla p_t(t^m)-\nabla \delta_t(\Pi_{h}p(t^m)),\nabla \theta^m\big)+\big( g(U^m)-g(u(t^m)), \nabla \theta^m\big),\label{eq49.a..}\\
	&\big(\nabla \theta^m,\nabla \delta_t\xi^m\big)=\delta_{t}\big(\eta^m, \xi^m\big)-\big( \delta_{t}\eta^m,\xi^{m-1}\big)+\big(\xi^m, \delta_t\xi^m\big).\label{eq49.b..}
	\end{align} \label{eq49...}
\end{subequations}
Using equation \eqref{eq49.b..} in \eqref{eq49.a..} with positive coefficient $\alpha$, we have 
\begin{align}
&\big(\delta_t\theta^m,\theta^m\big)+\big(\delta_t\xi^m, \xi^m\big)\leq\big( \delta_{t}\eta^m,\xi^{m-1}\big)-\delta_{t}\big(\eta^m, \xi^m\big)-\big(u_t(t^m)-\delta_t(\Pi_{h}u(t^m)),\theta^m\big)\nonumber\\&~~~~~-\big(\nabla p_t(t^m)-\nabla \delta_t(\Pi_{h}p(t^m)),\nabla \theta^m\big)+\big( g(U^m)-g(u(t^m)), \nabla \theta^m\big).\label{11}
\end{align}
Using the definition of $\delta_t\theta^m$ and $\delta_t\xi^m$ with Cauchy Schwarz inequality and Young's inequality, one can see that 
\begin{equation}
(\delta_t\theta^m,\theta^m)+(\delta_t\xi^m, \xi^m)\geq \frac{1}{2}\delta_t\big(\|\theta^m\|^2+\|\xi^m\|^2\big).\nonumber
\end{equation}
Multiplying both sides by $2k$ and summing from $m=1$ to $J$, above becomes
\begin{equation}
2k\sum_{m=1}^{J}(\delta_t\theta^m,\theta^m)+2k\sum_{m=1}^{J}(\delta_t\xi^m, \xi^m)\geq \|\theta^J\|^2+\|\xi^J\|^2-\|\xi^0\|^2. \label{start}
\end{equation}
Now, we simplify each term of \eqref{11} one by one. Let us  consider the first term after multiplying by $2k$ and summing from $m=1$ to $J$ 
\begin{eqnarray}
2k\sum_{m=1}^{J}(\delta_{t}\eta^m,\xi^{m-1})\leq C\bigg(2k\sum_{m=1}^{J}\|\delta_{t}\eta^m\|^2+2k\sum_{m=1}^{J}\|\xi^{m-1}\|^2\bigg).
\end{eqnarray}
Here,
\begin{align}
\|\delta_{t}\eta^m\|^2&=\|(\delta_{t} p(t^m)-\delta_{t}\Pi_hp(t^m))\|^2\nonumber\\
&=\|\frac{p(t^m)-\Pi_hp(t^m)}{k}-\frac{p(t^{m-1})-\Pi_hp(t^{m-1})}{k}\|^2\nonumber\\&=k^{-2}\|\int_{t^{m-1}}^{t^m}(I-\Pi_h)
{p_t}(\cdot,s)ds\|^2\nonumber\\&\leq k^{-1}\bigg(\int_{t^{m-1}}^{t^m}\|(I-\Pi_h){p_t}(\cdot,s)\|^2ds\bigg)\;\;\; (\mbox{Using}\;\mbox{Holder}\;\mbox{Inequality})\nonumber\\&\leq Ck^{-1}h^{2r}\int_{t^{m-1}}^{t^m}\|{p_t}(\cdot,s)\|_r^2ds.\;\;\; (\mbox{Using}\;\mbox{equation}\; \eqref{eq76.}) \label{aa}
\end{align}
Next, we multiply the second term of \eqref{11} by $2k$ and summing from $m=1$ to $J$ to get
\begin{align}
2k\sum_{m=1}^{J}\delta_{t}(\eta^m, \xi^m)&=C\big[(\eta^J, \xi^J)-(\eta^0, \xi^0)\big]\nonumber \\&\leq C\big(\|\eta^J\|^2+ \|\xi^J\|^2+\|\eta^0\|\|\xi^0\|\big).
\end{align}
From third term of \eqref{11}, we get
\begin{align}
(u_t(t^m)-\delta_t(\Pi_{h}u(t^m)),\theta^m)\leq C\big(\|u_t(t^m)-\delta_t(\Pi_{h}u(t^m)\|^2+\|\theta^m\|^2\big).
\end{align}
Here, 
\begin{align}
\|u_t(t^m)-\delta_t(\Pi_{h}u(t^m))\|^2&\leq C \big(\|u_t(t^m)-\delta_tu(t^m)\|^2+\|\delta_tu(t^m)-\delta_t(\Pi_{h}u(t^m))\|^2\big)\nonumber\\&=Q_1+Q_2. \nonumber
\end{align}
Using Taylor's theorem, we get
\begin{align}
Q_1&=\|{u_t}(t^{m})-\frac{u(t^m)-u(t^{m-1})}{k}\|^2\nonumber\\&=k^{-2}\|\int_{t^{m-1}}^{t^{m}}(s-t^{m-1}){u_{tt}}(\cdot,s)ds\|^2\nonumber\\&\leq \|\int_{t^{m-1}}^{t^{m}}{u_{tt}}(\cdot,s)ds\|^2\nonumber\\&\leq k\int_{t^{m-1}}^{t^{m}}\|{u_{tt}}(\cdot,s)\|^2ds.\;\;\; (\mbox{Using}\;\mbox{Holder}\;\mbox{Inequality}) \label{eq12}
\end{align}
A similar calculation to \eqref{aa} shows that
\begin{equation}
\|\delta_tu(t^m)-\delta_t(\Pi_{h}u(t^m))\|^2 \leq Ck^{-1}h^{2r}\int_{t^{m-1}}^{t^m}\|{u_t}(\cdot,s)\|_r^2ds.
\end{equation}
From the fourth term of \eqref{11}, we have
\begin{align}
(\nabla p_t(t^m)-\nabla\delta_t(\Pi_{h}p(t^m)),\nabla \theta^m)\leq C\big(\|\nabla p_t(t^m)-\nabla\delta_t(\Pi_{h}p(t^m)\|^2+\|\nabla \theta^m\|^2\big).
\end{align}
Here, 
\begin{align}
\|\nabla p_t(t^m)-\nabla\delta_t(\Pi_{h}p(t^m))\|^2&\leq C \big(\|\nabla p_t(t^m)-\nabla\delta_tp(t^m)\|^2\nonumber\\&~~~~~+\|\nabla\delta_tp(t^m)-\nabla\delta_t(\Pi_{h}p(t^m))\|^2\big).\nonumber
\end{align}
A similar calculation done in equations \eqref{aa} and \eqref{eq12} shows that
\begin{align}
\|\nabla p_t(t^m)-\nabla\delta_tp(t^m)\|^2&\leq k\int_{t^{m-1}}^{t^{m}}\|{p_{tt}}(\cdot,s)\|_1^2ds.\\
\|\nabla\delta_tp(t^m)-\nabla\delta_t(\Pi_{h}p(t^m))\|^2&\leq Ck^{-1}h^{2r}\int_{t^{m-1}}^{t^m}\|{p_t}(\cdot,s)\|_{r+1}^2ds.
\end{align} 
Also, choosing $\chi'=\theta^m$ in \eqref{eq49.b.}, we have
\begin{equation}
\|\nabla \theta^m\|^2\leq C (\|\theta^m\|^2+\|\xi^m\|^2+\|\eta^m\|^2).\label{21}
\end{equation}
Next, using the stability estimate for function $g$ in last term of \eqref{11} with above equation provides
\begin{align}
( g(U^m)-g(u(t^m)), \nabla \theta^m) &\leq \| g(U^m)-g(u(t^m))\|\|\nabla \theta^m\| \nonumber\\&\leq C\big(\|\rho^m\|+\|\theta^m\|\big)\|\nabla \theta^m\|  \nonumber\\&\leq C (\|\theta^m\|^2+\|\xi^m\|^2+\|\eta^m\|^2+\|\rho^m\|^2).\label{end}
\end{align}
Finally, multiplying equation \eqref{11} by $2k$, summing from $m=1$ to $J$ and using equations \eqref{start}-\eqref{end}, we have 
\begin{align}
\|\theta^J\|^2+\|\xi^J\|^2&\leq \|\xi^0\|^2+ Ck\sum_{m=1}^{J}(\|\eta^m\|^2+\|\rho^m\|^2)+Ck\sum_{m=0}^{J}(\|\theta^m\|^2+\|\xi^m\|^2)\nonumber\\&+ Ch^{2r}\int_{0}^{t^J}\|{p_t}(\cdot,s)\|_r^2ds +Ch^{2r}\int_{0}^{t^J}\|{u_t}(\cdot,s)\|_r^2ds\nonumber\\&+Ch^{2r}\int_{0}^{t^J}\|{p_t}(\cdot,s)\|_{r+1}^2ds+Ck^2\int_{0}^{t^{J}}\|{u_{tt}}(\cdot,s)\|^2ds\nonumber\\&
+Ck^2\int_{0}^{t^{J}}\|{p_{tt}}(\cdot,s)\|_1^2ds. \label{1111}
\end{align}
For estimating $\xi^0$, choosing $\chi'=\xi^0$ in \eqref{eq49.b.} at initial level $m=0$ follows
$$\big(\nabla \theta^0,\nabla \xi^0\big)=\big(\eta^0, \xi^0\big)+\big(\xi^0,\xi^0\big).$$
Using Cauchy Schwarz inequality and Young's inequality with $\theta^0=0$, we have 
\begin{equation}
 \|\xi^0\|^2\leq \|\eta^0\|^2\leq Ch^{2r}\|p(0)\|_r^2.
\end{equation}
Hence, using the given hypothesis with the above equation in \eqref{1111}, we get
$$\|\theta^J\|^2+\|\xi^J\|^2\leq C(h^{2r}+k^2)+Ck\sum_{m=0}^{J}(\|\theta^m\|^2+\|\xi^m\|^2).$$
which become
$$(1-Ck)\big[\|\theta^J\|^2+\|\xi^J\|^2\big]\leq C(h^{2r}+k^2)+Ck\sum_{m=0}^{J-1}(\|\theta^m\|^2+\|\xi^m\|^2).$$
Selecting $k_1$ with $0<k\leq k_1$ such that $1-Ck>0$ and using discrete Gronwall inequality follows
\begin{equation}
\|\theta^J\|^2+\|\xi^J\|^2\leq C(h^{2r}+k^2), \;\; J=1,2,\ldots N. \label{211}
\end{equation}
Thus equation \eqref{21} becomes
\begin{equation}
\|\theta^J\|^2_1\leq C(h^{2r}+k^2), \;\; J=1,2,\ldots N. \label{222}
\end{equation}
The rest of the proof follows by using equations \eqref{eq76}, \eqref{eq76.}, \eqref{211} and \eqref{222}.
\end{proof}

\bibliographystyle{amsplain}
\bibliography{Ref}
\end{document}